\newtheorem{thm}{Theorem}
\newtheorem{lem}{Lemma}
\newtheorem{applem}{Lemma A.}
\theoremstyle{definition}
\newtheorem{example}{Example}
\begin{document}

% ============================
% Math macros
% ============================
\renewcommand{\P}{\mathbb{P}}
\newcommand{\E}{\mathbb{E}}
\newcommand{\R}{\mathbb{R}}
\newcommand{\Q}{\mathbb{Q}}
\newcommand{\X}{\mathcal{X}}
\newcommand{\Y}{\mathcal{Y}}
\newcommand{\Hcal}{\mathcal{H}}
\newcommand{\F}{\mathrm{F}}
\newcommand{\1}{\mathbbm{1}}
\newcommand{\Var}{\mathrm{Var}}
\newcommand{\var}{\mathrm{Var}}
\newcommand{\tr}{\operatorname{tr}}
\newcommand{\Vol}{\mathrm{Vol}}
\newcommand{\norm}[1]{\left\lVert #1 \right\rVert}
\newcommand{\floor}[1]{\lfloor #1 \rfloor}
\newcommand{\veps}{\varepsilon}
% Custom macros
\newcommand{\GenJM}{\widetilde{\mathscr{E}}}
\newcommand{\GenBG}{\widetilde{\mathscr{D}}}
\newcommand{\GammaT}{\tilde{\gamma}}
\newcommand{\thetaT}{\tilde{\theta}}
% Vectors & matrices
\newcommand{\Xvec}{\mathbf{X}}
\newcommand{\Yvec}{\mathbf{Y}}
\newcommand{\Zvec}{\mathbf{Z}}
\newcommand{\zvec}{\mathbf{z}}
\newcommand{\Wvec}{\mathbf{W}}
\newcommand{\xvec}{\mathbf{x}}
\newcommand{\yvec}{\mathbf{y}}
\newcommand{\wvec}{\mathbf{w}}
\newcommand{\thetavec}{\boldsymbol{\theta}}
\newcommand{\sigmat}{\boldsymbol{\Sigma}}
\newcommand{\muvec}{\boldsymbol{\mu}}

% ============================
% Title
% ============================
\begin{center}
  {\Large\bfseries On High-Dimensional Change-Point Detection Based on Pairwise Distances\par}
  \vspace{1.2em}
  {\large Spandan Ghoshal$^{1}$, Bilol Banerjee$^{2}$ and Anil K.\ Ghosh$^{1}$\par}
  \vspace{0.7em}
  {\itshape
    $^{1}$Theoretical Statistics and Mathematics Unit, Indian Statistical Institute, Kolkata\\[2pt]
    $^{2}$Department of Statistics and Data Science, National University of Singapore
  }
\end{center}%{1.5em}

%%%%%%%%%%%%%%%%%%%%%%%%%%%%%%%%%%%%%%%%%%%%%%%%%%%%%%%%%%%%%%%%%%%%%%%%%%%%%%%%%%%%%%%%%%%%%%%%%%%%%%%%%%%%%%%%%%%%%%%%%%%%

\begin{quotation}
\noindent {\it Abstract:}
{In change-point analysis, one aims at finding the locations of abrupt distributional changes (if any) in a sequence of multivariate observations. In this article, we propose some pairwise distance-based methods for this purpose. These proposed methods can be conveniently used for high-dimensional data even when the dimension is much larger than the sample size. We carry out theoretical investigations on their behaviour not only when the dimension of the data remains fixed, and the sample size grows to infinity, but also when the dimension diverges to infinity while the sample size may or may not grow with the dimension. Several simulated and real datasets are analyzed to compare the empirical performance of these proposed methods against some state-of-the-art methods.}\\
%Lorem ipsum dolor sit amet, debet putant in nec, fugit gloriatur eloquentiam no vix. Hinc mucius disputando sit ad. Qui quas ubique ancillae id. Ad mel doctus tacimates, vis fabellas interesset ei.
%Nam tamquam prompta omittantur no. Te oratio liberavisse ius. Quo ea eruditi fierent, dicant exerci forensibus mel ex. Quod clita scripta ea his, in sea semper prompta inciderint. Eam ut mundi mnesarchum. Cum illud augue sonet eu.
%In usu dicat choro putent, no atqui viderer eleifend nec. Ei mea cibo senserit. Consul dissentiunt sea ei. Et discere scriptorem quo. Agam nonumy eirmod no vim, cu per odio abhorreant.
%Sit id scripta delenit, ad stet fierent nam, ex sale qualisque qui. Ei animal invidunt consulatu vis, cu vel decore euismod. Eos et autem altera, per ex quem nibh praesent. Vix te vide deseruisse, est justo possit iuvaret cu.
%Usu eripuit repudiare ne. Nobis oportere salutatus qui ex. Has eu quando appellantur mediocritatem, sit no regione singulis. Ei nibh idque sed, in libris voluptaria mea.

\vspace{9pt}
\noindent {\it Keywords and phrases:}
  Energy statistics, High-dimensional asymptotics, Non-asymptotic bounds, Permutation test, Sub-exponential distribution.
\par
\end{quotation}\par

\def\thefigure{\arabic{figure}}
\def\thetable{\arabic{table}}

\renewcommand{\theequation}{\thesection.\arabic{equation}}

\fontsize{12}{14pt plus.8pt minus .6pt}\selectfont

\section{Introduction}

Change-point detection is a classical problem in statistics. It originated in the field of statistical quality control \citep[see][]{girshick1952bayes} and now has widespread applications in many areas, including 
%cyber-security, \citep{Theodo2021Terrorism}, 
finance,
%\citep{habibi2021finance}, 
climate monitoring %\citep{Shoyama2021Flood} 
and genomics \citep[see][and the references therein]{TurongReview2020}. %\citep{WangGenomics}. 
In a sequence of $d$-dimensional random vectors $\Zvec_1\sim F_1,\ldots, \Zvec_n\sim F_n$, if $F_{\tau}$ differs from $F_{\tau+1}$ ($1\le \tau \le n-1$), $\tau$ is called a \text{change-point}. A sequence may have one or more change-points, or it may have none at all. So, the aim of change-point analysis is two-fold; (i) deciding whether a sequence has any change-point, and (ii) finding the change-points, if any. In single change-point analysis, we assume that the sequence can have at most one change-point. In such cases, we test the null hypothesis $H_0$: $F_1 = F_2= \cdots= F_n$ against the alternative hypothesis $H_1$: $F_1=\cdots=F_{\tau}\neq F_{\tau+1}=\cdots = F_n$ for 
    some unknown $\tau$, and also look for an estimate of the change-point if $H_0$ is rejected. 
    However, in multiple change-point analysis, we %do not make such assumptions and 
    allow the number of change-points to be more than one and aim to detect all of them. %There are mainly two types of change-point problems: (i) offline, where all observations are available before the analysis and (ii) online, where observations arrive chronologically at the time of
    %analysis. In this article, we consider the former one. 
        
    Starting from  \cite{girshick1952bayes} and \cite{Page1955}, several methods for change-point detection have been proposed in the literature. For univariate data, some notable parametric methods include \cite{sen1975tests,cox1982partitioning,Worsley1986} and \cite{James1987univ}. Some nonparametric methods \citep[see, e.g.][]{bhattacharyya1968nonparametric,pettitt1979non,wolfe1984nonparametric,csorgo1987nonparametric}) are also available. We refer the interested readers to the monographs by \cite{chen2011parametric} and \cite{brodsky2013nonparametric}
for a comprehensive review
on change-point methods for univariate data. %sequence of univariate observations.

    For the multivariate data, some notable parametric methods include \cite{srivastava1986likelihood,james1992asymptotic,zhang2010detecting}. However, they mainly focus on detecting location shifts in a sequence of independent Gaussian random vectors. Several nonparametric methods have also been developed for detecting location changes \citep[see, e.g.,][]{lung2015homogeneity,
    aston2014change, wang2018,shao2010self} and changes in covariance matrices \citep[see, e.g.,][]{avanesov2018change,Zhong2019,Keshavarz2020,Londschien2021,Kaul2023}, which can also be used for high-dimensional data.
   % when the dimension is larger than the sample size. 
%\cite{Keshavarz2020} and \cite{Londschien2021} developed methods for detecting changes in the covariance structure of a high-dimensional Gaussian graphical model. Other 
%    methods for 

        %which were later extended to multivariate Gaussian models (see \cite{Chernoff1964, Lorden1971, Sen1975}). But with the advent of modern data sets some developments explored the problem in the first two moments of the distributions (see \cite{Basseville1993scale, Aue2013scale, Pein2017scale}). Some parametric methods were also developed for the generalized exponential family of distributions (see \cite{Lavielle1999Generalized, Fearnhead2006generalized}). 
        
    %Several methods have also been proposed to detect more general distributional changes, and many of them can be used in high dimensions even when the dimension exceeds the sample size. These include the 
    Some graph-based methods were proposed \citep{chen2015graph, shi2017consistent,dawn2025clustcp} to detect more general distributional changes. Most of them use graph-based two-sample test statistics for change-point detection. For instance, \cite{chen2015graph} developed methods based on minimum distance pairing \citep{rosenbaum2005exact}, nearest neighbors \citep{schilling1986multivariate,henze1988multivariate}, and minimum spanning tree \citep{friedman1979multivariate}. But these graph-based tests are inefficient in the classical asymptotic regime \citep[see][]{Bhattacharya2019}. This is likely to have a negative impact on the performance of the associated change-point detection methods. Some kernel-based methods are also available \citep[see, e.g.][]{%desobry2005online,
harchaoui2008kernel,li2015m,Harchaoui2019Kernel}. However, as mentioned in \cite{chen2015graph}, their
performance depends heavily on the choice of the kernel and the associated bandwidth, which is crucial in higher dimensions. %and that becomes more crucial in high dimensions.
Another popular approach was proposed by \cite{Matteson2014} using %a distance-based method %for change-point detection based on 
the two-sample energy statistic \citep[see, e.g.,][]{Baringhaus2004}. %and prove the large sample consistency of their change-point estimation method.
However, the test based on this energy statistic often yields poor performance for high-dimensions \citep[see][]{Biswas2014}. This may adversely affect the performance of the associated change-point detection method. %may not perform well in high dimensions %of the data is large compared to the sample size and the 
%if the change is mainly in scale.

To demonstrate this, %study the empirical performance of these popular nonparametric methods in high dimensions, we 
consider two simple examples, each involving 50 observations from 200-dimensional normal distributions %In both cases, the first 25 observations are generated from the standard normal distribution, while the other 25 observations are generated from another normal distribution. %So, 
%In both cases, we have a single 
with a change-point at $25$. 
Descriptions of these two examples are given below.

\begin{example}\label{exa:1}
		$\Zvec_1,\ldots,\Zvec_{25} \stackrel{iid}{\sim} \mathcal{N}({\bf 0}_{200},{\bf I}_{200})$ and $\Zvec_{26},\ldots,\Zvec_{50} \stackrel{iid}{\sim} \mathcal{N}(0.3 {\bf 1}_{200},{\bf I}_{200})$,
		where ${\bf 0}_d=(0,0,\ldots,0)^\top$ and ${\bf 1}_d=(1,1,\ldots,1)^\top$ are a $d$-dimensional vectors,
        ${\bf I}_d$ is the $d \times d$ identity matrix, and $\mathcal{N}(\muvec,\sigmat)$ denotes the normal distribution with the mean vector $\muvec$
	and the dispersion matrix $\sigmat$.
    \end{example}
	%\vspace{0.05in}
	
	\begin{example}\label{exa:2}
		$\Zvec_1,\ldots,\Zvec_{25} \stackrel{iid}{\sim} \mathcal{N}({\bf 0}_{200},{\bf I}_{200})$ and $\Zvec_{26},\ldots,\Zvec_{50} \stackrel{iid}{\sim} \mathcal{N}({\bf 0}_{200}, 1.3 {\bf I}_{200})$. 
	\end{example}

    For these examples, we use the distance-based method of \cite{Matteson2014} (referred to as MJ), kernel change-point analysis (KCPA) proposed by \cite{Harchaoui2019Kernel} and the graph-based methods of \cite{chen2015graph} based on minimum spanning tree (MST) and nearest neighbor graph (NNG) with five neighbors. For KCPA, we use the Gaussian kernel, where the bandwidth is chosen based on a pilot study. Each experiment is repeated $1000$ times. %, and the results are reported in Figure~\ref{fig:histogram_plots}. 
    These methods use a suitable statistic ($S(t)$, say) to compute the distributional difference between the two sets $\{\Zvec_1,\ldots,\Zvec_t\}$ and $\{\Zvec_{t+1},\ldots,\Zvec_n\}$. This is done for several choices of $t$, and the maximizer of $S(t)$ is considered as the potential change-point; call it $t_0$. One uses $S(t_0)$ to test for the statistical significance of this change-point and rejects $H_0$ (the hypothesis of no change) when $S(t_0)$  is significantly large.
   % exceeds the threshold (computed using the permutation method) associated with a given level of significance, which is taken as $\alpha=0.05$ throughout this article. 
   %associated with hypothesis testing, the cut-off is computed based on the permutation method. 
    If $H_0$ is rejected, $t_0$ is considered as the estimated or detected change-point. The grey and black bars in Figure~\ref{fig:histogram_plots} show the frequency distributions of potential and detected (at level $\alpha=0.05$) change-points, respectively. %while the black bar gives the number of cases where the changes were statistically significant at 5\% level (i.e., the change-point is detected).

    \begin{figure}[t]
\centering
\includegraphics[width=0.95\textwidth]{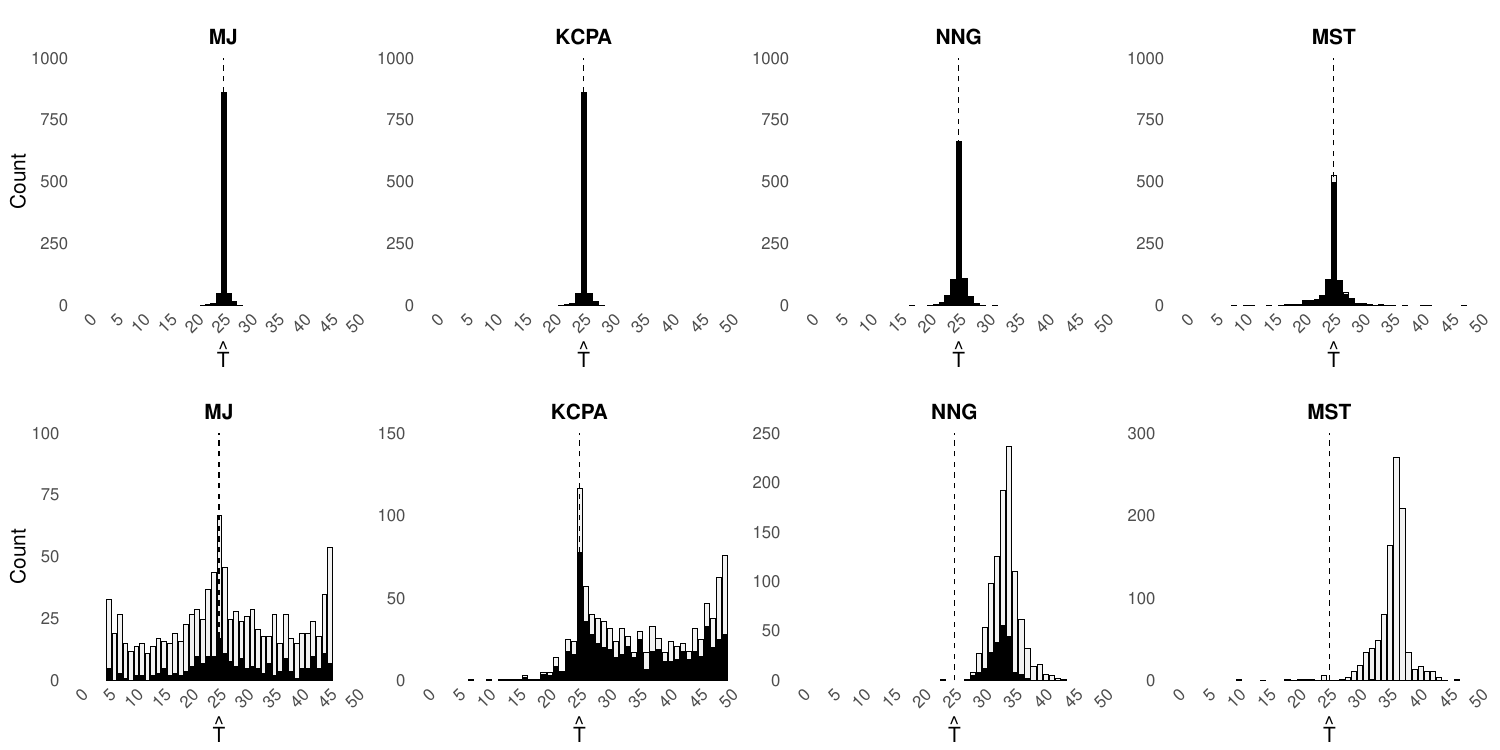}
%0.35in}
\caption[Counts of statistically significant and insignificant potential change-points at various locations]{Frequency distributions of potential (grey bars) and  detected %(estimated)
change-points (black bars) in Examples \ref{exa:1} (top row) and \ref{exa:2} (bottom row). %The dashed line represents the true change-point $\tau = 25$.
}
\label{fig:histogram_plots}
%\vspace{0.1in}
\end{figure}
%\begin{figure}[htbp]
%		\centering
%		\setlength{\tabcolsep}{-2pt}
%		\begin{tabular}{cccc}
			%\multicolumn{3}{c}{ Normal distributions} \\ [5pt]
%			Example \ref{exa:1}  \\
%			\includegraphics[width  = \textwidth]{Plots/Location_other_methods.pdf}\\
%			Example \ref{exa:2}\\
%			\includegraphics[width  = \textwidth]{Plots/Scale_other_methods.pdf}
%		\end{tabular}
		
%		%0.1in}
%		\caption[Counts of statistically significant and insignificant potential change-points at various locations]{Bar plots showing the distribution of potential change-points (grey bars) and that of detected (estimated) change-points (black bars) in Ex. \ref{exa:1} and \ref{exa:2}. The dashed line indicates the true change-point at $\tau = 25$.}
%		\label{fig:histogram_plots}
%	\end{figure}
	%\vspace{0.1in}
    In Example \ref{exa:1}, all methods had satisfactory performance in detecting the location shift. Among them, MJ had the best performance followed by KCPA. %. It correctly detected the potential change-point at $25$ in $866$ cases, and on all occasions. KCPA found the potential change-point on $869$ occasions, but in only $478$ cases, they were finally detected as change points. 
    %Graph-based methods had slightly inferior performance.
    However, in Example \ref{exa:2}, they had poor performance in detecting the underlying change in scale. % in most of cases. %JM correctly estimated the change-point $66$ times, and in $20$ of those cases, it was statistically significant. 
    Among them, the performance of KCPA was slightly better. %For this method, these numbers were $110$ and $71$, respectively. 
%The potential change-points computed by graph-based methods had an upward bias. %None of them correctly estimated the change-point even on a single occasion. 
Though NNG detected change-points (not at the true location) in a few cases, MST could not even do that on a single occasion.
    
	%In the nonparametric setting, change-point detection algorithms were proposed using graph-based methods (\cite{chen2015graph}), distance-based methods (\cite{Matteson2014}), and kernel-based methods (\cite{Harchaoui2019Kernel}). %These techniques leverage different statistical principles to detect changes in the underlying data distributions. 
	
    %However, with the increasing availability of high-dimensional data, new methods have been developed to address such challenges where the dimension exceeds the sample size. Early approaches focused on detecting changes in location parameters \cite{Horvath2012, Liu2020, Jiang2020, Shu2022, Wang2022, Cai2023, Wu2023}. Other methods aimed at identifying shifts in high-dimensional covariance structures, particularly in Gaussian and sub-Gaussian models \cite{Zhong2019, Steland2020, Keshavarz2020, Fischer2020, Wang2021, Londschien2021, Kaul2023}. A separate line of research has extended change point detection methods to high-dimensional time series, effectively handling temporal dependencies in structured data \cite{Wang2018, Dette2018, Ma2022, Mause2020, Bai2023, Zhong2021}. 

	% Similar behaviour were also noted by \cite{Biswas2014} and recently in \cite{hao2023gen} in the context of two-sample problem. From our simulations it is clear that a similar problem also exists in the context of change point problem. 

    %Example 2 clearly demonstrates the limitation of these popular change-point detection methods. 
    To take care of such issues, in this article, we propose some new distance-based methods, %for change-point detection,  
     and investigate their performance both from theoretical and empirical perspectives. Our contributions are summarised as follows.
	
In Section \ref{sec:algorithm}, we propose a method based on $\ell_2$ distance for single change-point detection and establish its large sample consistency against general alternatives.
In Section \ref{sec:hd-behaviour}, we investigate its performance for high dimensional data. We show that the proposed method based on $\ell_2$ distance may fail to detect changes outside the first two moments, but the use of other suitable distances can take care of this problem. We also use a non-asymptotic framework to study the behavior of our proposed methods when the dimension and the sample size grow simultaneously.  
Some high dimensional simulated data sets are analyzed in Section \ref{sec:Simulation} to demonstrate the utility of the proposed methods.  %against some state-of-the-art approaches. % against high-dimensional alternatives. %We show that our estimator is competitive with the existing methods against location alternatives, but has an overwhelming performance against scale alternatives. Specifically when the dimension of the data is large compared to the sample size.
In Section \ref{subsec:multiple}, we extend our methods for multiple change-point detection. A real life financial data set is analyzed in Section \ref{subsec:real}.
       Finally, Section \ref{sec:Concluding} contains a brief summary of the work and some remarks on possible directions for future research.  All proofs and mathematical details are given in the Supplementary material.

\section{The proposed method}
	\label{sec:algorithm} 
    %%In this section, we propose an algorithm for detecting a single change point in a given data sequence and study its theoretical properties. We assume that we have an ordered data
    %sequence $\Zvec_{1},\Zvec_{2},\ldots,\Zvec_{T}$ of random vectors in ${\mathbb{R}}^{d}$ such that $\Zvec_{i}\sim\F_{1}$ for $1\leq i\leq\tau$
    %and $\Zvec_{i}\sim\F_{2}$ for $\tau<i\leq T$. Here, $\tau$ is the
    %change point and it is unknown. We want to estimate $\tau$ nonparametrically.% when the dimension $d$ is comparable to or larger than the sample size $T$. 
    %In this section, we consider the problem of single change-point detection. 
    %Let $F_1,F_2,\ldots,F_T$ be the distributions of $\Zvec_1,\Zvec_2,\ldots,\Zvec_T$, respectively. Here we test the null hypothesis $H_0$: $F_1 = F_2= \cdots= F_T$ against the alternative hypothesis $H_1$: $F_1=\cdots=F_{\tau}\neq F_{\tau+1}=\cdots = F_T$ for some $\tau$ and also look for an estimate the change-point $\tau$ if $H_0$ is rejected.
     %Our method is motivated by the divergence measure used in \citet{Biswas2014}.
     For independent random vectors $\Xvec_1,\Xvec_2\overset{iid}{\sim}\F_{1}$ and $\Yvec_1,\Yvec_2\overset{iid}{\sim}\F_{2}$, \cite{Maa1996} proved that
     $\norm{\Xvec_1-\Yvec_1}$, $\norm{\Xvec_1-\Xvec_2}$ and $\norm{\Yvec_1-\Yvec_2}$ follow the same distribution if and only if $\F_1$ and $\F_2$ are identical. %Under the moment condition, 
     Assuming $\E[\|\Xvec_1\|+\|\Yvec_2\|]<\infty$, \cite{Baringhaus2004} defined the energy distance
    %proved that 
    ${\cal E}(\F_1,\F_2)=2\E\norm{\Xvec_1-\Yvec_1}-\E\norm{\Xvec_1-\Xvec_2}-\E\norm{\Yvec_1-\Yvec_2}$ and proved that $\mathcal{E}(\F_1,\F_2)\geq 0$ where the equality holds if and only if $\F_1=\F_2$. 
    Now, define $\muvec_{1}=(\E\norm{\Xvec_1-\Xvec_2},\E\norm{\Xvec_1-\Yvec_2})^{\top}$ and ${\muvec}_{2}=(\E\norm{\Yvec_1-\Xvec_2},\E\norm{\Yvec_1-\Yvec_2})^\top$.
    Note that the energy distance between $\F_1$ and $\F_2$ %\citep[see, e.g.,][]{Baringhaus2004,baringhaus2010rigid,szekely2013energy}, 
    can also be written as ${\cal E}(\F_1,\F_2)={\bf a}^{\prime}\left({\muvec}_{1}-{\muvec}_{2}\right)$, i.e., the distance between $\muvec_1$ and $\muvec_2$ along the direction ${\bf a}=(-1,1)^{\top}$. Instead, one can use the actual distance $\|\muvec_1-\muvec_2\|$ or its square $D(F_1,F_2)=\|\muvec_1-\muvec_2\|^2$ as a measure of divergence. %which is the main motivation of the divergence measure in \cite{Biswas2014}. Define  $D(F_1,F_2)=\|\muvec_1-\muvec_2\|^2$. Then, 
    %we get an interesting result (see Lemma \ref{lem:char}), which 
    Clearly, $D(\F_1,\F_2)$ is non-negative, and the following lemma shows that $D(\F_1,\F_2)=0$ if and only if $\F_1$ and $\F_2$ are identical.
    %(\muvec_{1}-\muvec_{2})$
    %can reveal more information in that regard. %The following lemma is due to \cite{Biswas2014} that ensures the distribution characterization property of $\mathcal{D}(\cdot,\cdot)$. 
     %	\begin{lem}
     %	\label{lem:char-1}
     %		Let $\Xvec,\Xvec^{*}\overset{iid}{\sim}\F_1$ and $\Yvec,\Yvec^{*}\overset{iid}{\sim}\F_2$. Also assume $\E\norm{\Xvec}+\E\norm{\Yvec}<\infty$. Then, $\mathcal{D}\left(\F_1,\F_2\right)=0$	if and only if $\F_1=\F_2$. 
     %	\end{lem}	
     %Following \citet{baringhaus2010rigid}, the measure can also be %modified
    %using the pairwise dissimilarities $\phi\left(\frac{1}%{d}\norm{\xvec-\yvec}^{2}\right)$,
    %where $\varphi$ is a function with completely monotone derivative with
     %In practice, we can use $\phi\left(t\right)=\sqrt{t},\log\left(1+t\right),$ or $1-\exp\left(-t/2\right)$. 
    % Then the divergence measure is $\mathcal{D}^{\phi}%(\F_{1},\F_{2})=\norm{\muvec_{1}^{\phi}-\muvec_{2}^{\phi}}$,
   %where ${\muvec}_{1}^{\phi}$ and ${\muvec}_{2}^{\phi}$ are defined
    %as $\big(\mathbb{E}\phi\big(\frac{1}{d}\norm{\Xvec-\Xvec^{*}}\big),\E\phi\big(\frac{1}{d}\norm{\Xvec-\Yvec}\big)\big)^{T}$
    %and $\big(\E\phi\big(\frac{1}{d}\norm{\Xvec-\Yvec}\big),\E\phi\big(\frac{1}{d}\norm{\Yvec-\Yvec^{*}}\big)\big)^{T}$,
    %respectively. 
 %as the divergence between $F_1$ and $F_2$.    
    %and $D(F_1,F_2)=\|\muvec_1-\muvec_2\|^2$ (where $\norm{\cdot}$ denotes the Euclidean norm). Using \cite{Baringhaus2004}, 
    
    \begin{lem}
    \label{lem:1} If $\Xvec_1,\Xvec_2\overset{iid}{\sim}\F_{1}$
    and $\Yvec_1,\Yvec_2\overset{iid}{\sim}\F_{2}$ are independent with $\E\norm{\Xvec_1}+\E\norm{\Yvec_1}<\infty$,
    then $D(F_1,F_2)=0$ if and only if $\F_{1}=\F_{2}$. 
    \end{lem}
    
    %Lemma 1 shows the characterisation property of $D(\cdot,\cdot)$. 
    We use this divergence measure $D(\cdot,\cdot)$ to construct our change-point detection algorithm. %{\color{blue} Throughout, we assume $\E\|\Xvec\| < \infty$ for any random vector $\Xvec$.}
    %For a particular choice of $\F_{1}$ and $\F_{2}$ one can find several
    %choices of $\psi$ that satisfies the condition in Lemma \ref{lem:char}.
    %However, we will use $\phi\left(t\right)=\sqrt{t}$ for our empirical
    %studies. Given a data $\mathcal{X}_{n_{1}}=\{\Xvec_{1},\ldots,\Xvec_{n_{1}}\}$
    %on $\Xvec$ and $\mathcal{Y}_{n_{2}}=\{\Yvec_{1},\ldots,\Yvec_{n_{2}}\}$
    %we can estimate $\mathcal{D}^{\phi}$ by simple empirical pulg-in
    %approach, which is given by 
    %\begin{align*}
    %\mathcal{D}_{n_{1}+n_{2}}^{\phi}(\mathcal{X}_{n_{1}},\mathcal{Y}_{n_{2}})=\Big\{ T_{12}-T_{11}\Big\}^{2}+\Big\{ T_{12}-T_{22}\Big\}^{2},
    %\end{align*}
    %where 
    %\begin{align*}
    %T_{11} & ={n \choose 2}^{-1}\sum\limits_{1\leq i<j\leq n_{1}}\frac{1}{\sqrt{d}}\norm{\Xvec_{i}-\Xvec_{j}},\\
    %T_{22} & ={n_{2} \choose 2}^{-1}\sum\limits_{1\leq i<j\leq n_{2}}\frac{1}{\sqrt{d}}\norm{\Yvec_{i}-\Yvec_{j}},\textnormal{ and}\\
    %T_{12} & =\frac{1}{n_{1}n_{2}}\sum\limits_{1\leq i\leq n_{1}}\sum\limits_{1\leq j\leq n_{2}}\frac{1}{\sqrt{d}}\norm{\Xvec_{i}-\Yvec_{j}}.
    %\end{align*}
    %Following a similar approach we propose a change point detection algorithm
    %as follows. 
    %Like other change-point methods,  
     First, for some $t$, divide the sequence %of independent $d$-dimensional random vectors 
     $\left\{ \Zvec_{1},\Zvec_{2},\dots,\Zvec_{n}\right\}$ into two parts  $\mathcal{X}_{t}=\{\Zvec_{1},\ldots,\Zvec_{t}\}$ and
    $\mathcal{Y}_{t}=\{\Zvec_{t+1},\ldots,\Zvec_{n}\}$ %Using $\mathcal{X}_t$ and $\mathcal{Y}_t$, 
    and compute    
%    Assuming that these two sets of observations come from two distributions $F_x^{(t)}$ and $F_y^{(t)}$, respectively, we compute an estimate of the divergence measure $D(F_x^{(t)},F_y^{(t)})$. We define 
    \begin{align*}
    T_{11}\left(t\right) & ={t \choose 2}^{-1} \hspace{-0.1in}\sum\limits_{1\leq i<j\leq t}\norm{\Zvec{}_{i}-\Zvec_{j}},\quad T_{22}\left(t\right)={n-t \choose 2}^{-1} \hspace{-0.1in} \sum\limits_{t+1\leq i<j\leq n}\norm{\Zvec{}_{i}-\Zvec_{j}}\\
     & ~\quad\textnormal{ }\quad T_{12}\left(t\right)=\frac{1}{t\left(n-t\right)}\sum\limits_{1\leq i\leq t}\sum\limits_{t+1\leq j\leq n}\norm{\Zvec{}_{i}-\Zvec_{j}}.
    \end{align*}
    %to come up with an estimate of $D(F_x^{(t)},F_y^{(t)})$ given by
    Define
    $\mathcal{D}_{n}(\mathcal{X}_{t},\mathcal{Y}_{t})=\left\{ T_{12}\left(t\right)-T_{11}\left(t\right)\right\} ^{2}+\left\{ T_{12}\left(t\right)-T_{22}\left(t\right)\right\} ^{2}$
    as an empirical analog of $D(\cdot, \cdot)$. If $t$ is a true change-point,  $\mathcal{D}_{n}(\mathcal{X}_{t},\mathcal{Y}_{t})$ is expected to be large. %take larger values. %Therefore, to detect the presence of a change point,
    So, we compute $\mathcal{D}_{n}(\mathcal{X}_{t},\mathcal{Y}_{t})$ for different values of $t$ and consider 
    \begin{align}
    {\widehat T}_n & =\underset{t\in \mathcal{A}_n}{\text{argmax}}\frac{t\left(n-t\right)}{n^{2}}\mathcal{D}_{n}(\mathcal{X}_{t},\mathcal{Y}_{t})\label{eq:estimator}
    \end{align}
    as the potential change-point. Here we assume that there is at least $\delta_n$ (a pre-defined small quantity) proportion of observations both before and after the change-point (i.e., $\lceil n \delta_n\rceil \le \tau \le \lfloor n(1-\delta_n)\rfloor$, where for any $x \in {\mathbb R}$, $\lfloor x\rfloor=\inf\{k\in {\mathbb N}: k\ge x\}$ and $\lceil x \rceil=\sup\{k\in {\mathbb N}: k\le x\}$ are two integers). So, we find the maximiser over the set $\mathcal{A}_n = \{\lceil n\delta_n\rceil, \ldots, \lfloor n(1-\delta_n)\rfloor\}$.
    The potential change-point ${\widehat T}_n$ is considered to be statistically significant at level $\alpha$ ($0<\alpha<1$) if the value of the test statistic 
\begin{align}
    \widehat{S}_{n} & =\underset{t\in\mathcal{A}_n}{\text{max }}\frac{t\left(n-t\right)}{n^{2}}\mathcal{D}_{n}(\mathcal{X}_{t},\mathcal{Y}_{t})\label{eq:test-statistic}
\end{align}
    is significantly large. The cut-off is computed using the permutation method. In such cases, ${\widehat T}_n$ is considered as the estimated or detected change-point. %${\widehat \tau}_n$

    As $n$ increases, if we shrink the value of $\delta_n$ at an appropriate rate, ${\widehat \theta}_n=\widehat T_n/n$  becomes a strongly consistent estimator of $\theta=\displaystyle \lim_{n\rightarrow\infty}\tau/n$. Moreover, if the underlying distributions are sub-exponential \citep[see, e.g.,][] {foss2013introduction},  ${\widehat T}_n$ turns out to be statistically significant with probability tending to one as $n$ diverges to infinity. These results are asserted by Theorem 1.
    
    %\begin{itemize}
     %   \item[(A1)] Let $\Zvec_{1},\Zvec_{2},\dots,\Zvec_{T}$ be independent observations in $\mathbb{R}^d$ such that $\Zvec_{i}\sim\F_1$ for $i\le\tau$ and $\Zvec_i\sim\F_2$ for $\tau+1\le i\le T$ with $\floor{T\delta_T}\le\tau\le\floor{T(1-\delta_T)}$ for every $T$.
      %  \item[(A2)] The distributions $\F_1,\F_2$ are sub-Gaussian.

    %\end{itemize}

\begin{thm}
    \label{thm:large-sam-consistency}
      Suppose $\Zvec_1,\ldots, \Zvec_{\lfloor n\theta \rfloor}\stackrel{iid}{\sim}\F_1$ and $\Zvec_{\lfloor n\theta \rfloor +1},\ldots, \Zvec_{n}\stackrel{iid}{\sim}\F_2$ for some $\theta \in(0,1)$ $(\F_1 \neq \F_2)$.
      If $\delta_n \rightarrow 0$ and $n \delta_n/\log n \rightarrow \infty$ as $n \rightarrow \infty$, then 
\begin{enumerate}
    %0.085in}
    \item[$(a)$] ${\widehat{ \theta}}_n={\widehat{T}}_n/n$ converges to $\theta$ almost surely.
    %0.085in}
    \item[$(b)$] If $\F_1$ and $\F_2$ are sub-exponential, for any given level $\alpha ~(0<\alpha<1)$,
    the change-point $\tau$ is detected (by $\widehat{T}_n$) with probability tending to one.
\end{enumerate}
\end{thm}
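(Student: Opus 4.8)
The plan is to exhibit a deterministic ``signal curve'' $\phi$ on $[0,1]$ that the weighted objective $\tfrac{t(n-t)}{n^{2}}\mathcal{D}_{n}(\X_{t},\Y_{t})$ approximates uniformly over $t\in\mathcal{A}_{n}$, to show that $\phi$ has a unique, well‑separated maximum at $\theta$, and then to deduce $(a)$ by an argmax argument and $(b)$ by comparing $\widehat{S}_{n}$ with the permutation cut‑off. For $s\in(0,1)$ and $t=\lfloor ns\rfloor$, the blocks $\X_{t}$ and $\Y_{t}$ are, in the limit, mixtures of $F_{1}$ and $F_{2}$: for $s\le\theta$, $\X_{t}$ is pure $F_{1}$ while $\Y_{t}$ is asymptotically $H_{s}=\tfrac{\theta-s}{1-s}F_{1}+\tfrac{1-\theta}{1-s}F_{2}$, and symmetrically for $s\ge\theta$ with the roles of $F_{1},F_{2}$ and of $\X_{t},\Y_{t}$ swapped. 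Decomposing each of $T_{11}(t),T_{22}(t),T_{12}(t)$ into its within‑$F_{1}$, within‑$F_{2}$ and cross parts and invoking the strong law for (two‑sample) $U$‑statistics, one gets that $\tfrac{t(n-t)}{n^{2}}\mathcal{D}_{n}(\X_{t},\Y_{t})$ converges almost surely, uniformly over $t\in\mathcal{A}_{n}$, to $\phi(s):=s(1-s)\,D(\cdot,\cdot)$ evaluated at the corresponding limiting blocks, with $\phi$ extended continuously by $\phi(0)=\phi(1)=0$. The uniformity is the delicate point, since $\mathcal{A}_{n}$ contains windows as short as $\lfloor n\delta_{n}\rfloor$: it follows from maximal (reverse‑martingale / Doob‑type) inequalities for the partial‑sum processes $t\mapsto T_{ab}(t)$, using that the variance of the short‑window estimators is of order $n^{2}/[t(n-t)]$ times an $O(1)$ factor, so that the weight $t(n-t)/n^{2}$ compensates and only $n\delta_{n}\to\infty$ is required.

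The second step is to show $\phi$ is uniquely maximized at $\theta$. Put $a_{11}=\E\norm{\Xvec_{1}-\Xvec_{2}}$, $a_{22}=\E\norm{\Yvec_{1}-\Yvec_{2}}$, $a_{12}=\E\norm{\Xvec_{1}-\Yvec_{1}}$, $u=a_{12}-a_{11}$, $v=a_{12}-a_{22}$, so $D(F_{1},F_{2})=u^{2}+v^{2}$ and $\mathcal{E}(F_{1},F_{2})=u+v=:\mathcal{E}$; since there is a genuine change ($F_{1}\neq F_{2}$), Lemma~\ref{lem:char} gives $D(F_{1},F_{2})>0$ and the Baringhaus--Franz characterization gives $\mathcal{E}>0$. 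Because $(P,Q)\mapsto\E\norm{X-Y}$ is linear in the mixing weights of $P$ and of $Q$, a direct computation gives, for $s\le\theta$ and $\lambda_{s}:=\tfrac{1-\theta}{1-s}\in(0,1]$,
\[
\phi(s)=\frac{s(1-\theta)^{2}}{1-s}\Big[u^{2}+(\lambda_{s}\mathcal{E}-u)^{2}\Big],
\qquad
\phi(\theta)=\theta(1-\theta)\,(u^{2}+v^{2}),
\]
so that $\phi(s)/\phi(\theta)=\tfrac{s\lambda_{s}}{\theta}\cdot\tfrac{u^{2}+(\lambda_{s}\mathcal{E}-u)^{2}}{u^{2}+v^{2}}$. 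Now $s\lambda_{s}/\theta\le\lambda_{s}$ (strictly when $s<\theta$), and the elementary bound $\lambda\big[u^{2}+(\lambda\mathcal{E}-u)^{2}\big]\le u^{2}+v^{2}$ for $\lambda\in[0,1]$ — which, expanding with $\mathcal{E}=u+v$, amounts to $(1-\lambda)\big[(1+\lambda+\lambda^{2})v^{2}+(1-\lambda+\lambda^{2})u^{2}+2\lambda^{2}uv\big]\ge0$, true since the bracketed quadratic form is positive definite ($(1+\lambda+\lambda^{2})(1-\lambda+\lambda^{2})-\lambda^{4}=1+\lambda^{2}>0$) — shows $\phi(s)<\phi(\theta)$ for $s\in(0,\theta)$; the range $s\in(\theta,1)$ is the mirror image under $F_{1}\leftrightarrow F_{2}$, $s\leftrightarrow1-s$. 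Hence $\phi$ attains its maximum, $\phi(\theta)=\theta(1-\theta)D(F_{1},F_{2})>0$, uniquely at $\theta$.

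Part $(a)$ then follows: for any $\eta>0$, the previous two steps give $\sup\{\tfrac{t(n-t)}{n^{2}}\mathcal{D}_{n}(\X_{t},\Y_{t}):t\in\mathcal{A}_{n},\,|t/n-\theta|\ge\eta\}\to\sup_{|s-\theta|\ge\eta}\phi(s)<\phi(\theta)$ a.s., whereas $\tfrac{t(n-t)}{n^{2}}\mathcal{D}_{n}(\X_{t},\Y_{t})$ at $t=\lfloor n\theta\rfloor$ converges a.s.\ to $\phi(\theta)$ by a plain $U$‑statistic strong law; hence the maximizer $\widehat T_{n}$ eventually satisfies $|\widehat T_{n}/n-\theta|<\eta$ a.s., and as $\eta$ is arbitrary $\widehat\theta_{n}\to\theta$ a.s. For part $(b)$, the same uniform convergence yields $\widehat S_{n}\to\phi(\theta)>0$ a.s., so it remains to prove that the permutation cut‑off $c_{n,\alpha}$ — the conditional $(1-\alpha)$‑quantile, given the data, of $\widehat S_{n}^{\pi}$, the statistic recomputed on a uniformly random relabelling $\pi$ of $\Zvec_{1},\dots,\Zvec_{n}$ — tends to $0$ in probability. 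Conditionally on the data the observations form a fixed set with empirical distribution converging to $\bar F=\theta F_{1}+(1-\theta)F_{2}$, and under $\pi$ each of $T_{11}^{\pi}(t),T_{22}^{\pi}(t),T_{12}^{\pi}(t)$ has conditional mean $\bar T=\binom{n}{2}^{-1}\sum_{i<j}\norm{\Zvec_{i}-\Zvec_{j}}$. Repeating the argument of the first step — now with maximal inequalities for sampling‑without‑replacement (incomplete $U$‑) statistics, and using the sub‑exponentiality of $F_{1},F_{2}$ to make the relevant empirical variances $O_{\P}(1)$ with the required almost‑sure behaviour — gives $\sup_{t\in\mathcal{A}_{n}}\tfrac{t(n-t)}{n^{2}}\big(T_{ab}^{\pi}(t)-\bar T\big)^{2}\to0$ in $\pi$‑probability, hence $\E_{\pi}[\widehat S_{n}^{\pi}]\to0$ a.s.\ and, by conditional Markov, $\P_{\pi}(\widehat S_{n}^{\pi}>\varepsilon)\to0$ a.s.\ for every $\varepsilon>0$; therefore $c_{n,\alpha}\to0$ in probability, and combined with $\widehat S_{n}\to\phi(\theta)>0$ this gives $\P(\widehat S_{n}>c_{n,\alpha})\to1$, i.e.\ $\widehat T_{n}$ is detected as the change‑point at level $\alpha$ with probability tending to one.

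The main obstacle throughout is the uniform control over the growing index set $\mathcal{A}_{n}$ whose shortest windows carry only $\sim n\delta_{n}$ observations — the first step, and, for the permutation law, its analogue in the last paragraph; the identification of $\phi$ and the quadratic‑form inequality behind its maximality are routine by comparison, and the residual argmax and quantile arguments are standard.
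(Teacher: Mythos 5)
Your proposal is correct and follows essentially the same route as the paper's own proof: uniform almost-sure convergence of the weighted scan statistic to a deterministic curve uniquely maximized at $\theta$ (the paper's Lemmas A.3--A.4), an argmax argument for part (a), and for part (b) the convergence $\widehat S_n \to \theta(1-\theta)D(F_1,F_2)>0$ combined with concentration of the permuted block statistics around the grand mean $\bar T$ to force the permutation cut-off to zero (the paper's Lemmas A.1--A.2). Your positive-definite quadratic-form verification of the unique maximum is a slightly cleaner variant of the paper's Jensen-type computation, and your passing claim that the short-window variance is of order $n^{2}/[t(n-t)]$ is imprecise (it is of order $1/\min\{t,n-t\}$), but this is immaterial since the reverse-martingale/strong-law argument you also invoke delivers the needed uniform almost-sure control.
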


	\begin{figure}[t]
    %0.05in}
		\centering
		\setlength{\tabcolsep}{-2pt}
		\begin{tabular}{cccc}
			%\multicolumn{3}{c}{ Normal distributions} \\ [5pt]
			Example \ref{exa:1} & Example \ref{exa:2} \\
			\includegraphics[height=1.25in,width  = 0.4\textwidth]{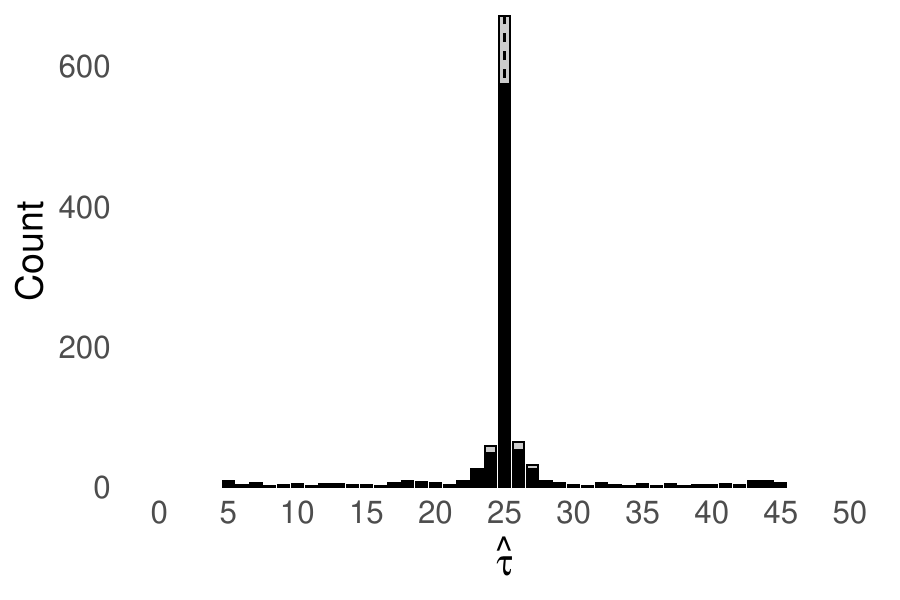} &
			\includegraphics[height=1.25in,width  = 0.4\textwidth]{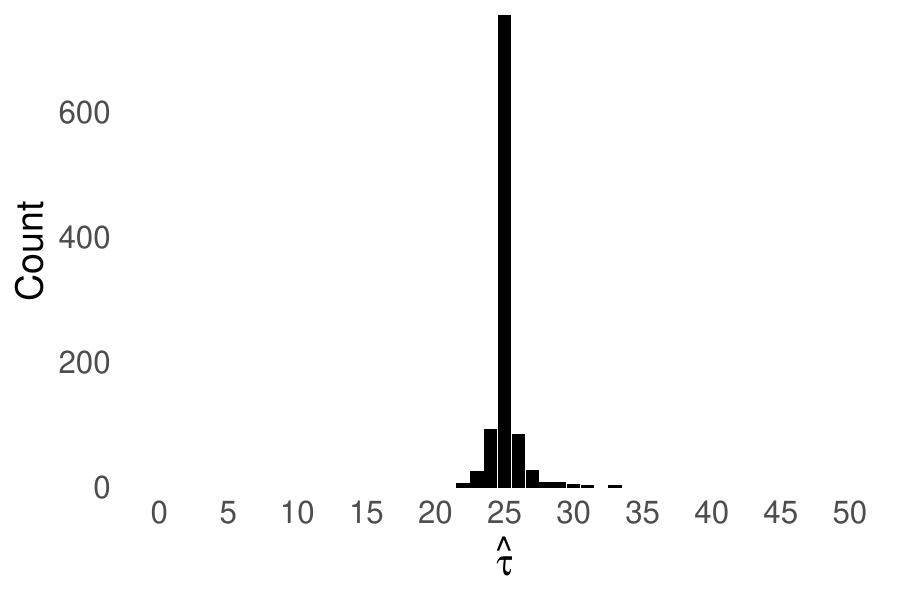}\\
		\end{tabular}
		
		%0.1in}
\caption{Frequency distribution of potential change-points (grey bars) and detected change-points (black bars) by the proposed method in Examples 1 and 2. The dashed line represents the true change-point $\tau = 25$.}
		\label{fig:bg_histogram_plots}
	\end{figure}

%Theorem \ref{thm:large-sam-consistency} and \ref{thm:large-sample-res} ensure that the proposed test and estimator are consistent against a fairly general class of alternatives. But, Due to the recent advancement of computational technologies, modern data sets often contain observations where the dimension of the data is comparable to or larger than the sample size. 
    %Note that the class of sub-exponential { distributions is a fairly large class of distributions containing the class of sub-Gaussian distributions, distributions with bounded support, and several elliptic and non-elliptic distributions.} 
    
    Figure \ref{fig:bg_histogram_plots}  shows that the proposed method had an excellent performance in Examples \ref{exa:1} and \ref{exa:2}. In these examples, we used $\delta_n=0.05$ (i.e., $n\delta_n=2.5)$ to have $|{\cal X}_t| \ge 2$ and $|{\cal Y}_t| \ge 2$ for all $t \in {\cal A}_n$. In Example \ref{exa:1}, the
    performance of the proposed method was comparable to that of NNG and KCPA, and better than MST. %However, the JM method {still} had the best performance. 
    But in Example \ref{exa:2}, while all other competing methods performed poorly, the proposed method had an outstanding performance. %detected the true change-point in more than half of the cases, and on all occasions, it was found to be statistically significant. 

%\begin{condition}[Sub-Exponential family]\label{cond:subexp}
%Let $\Zvec \in \R^d$ be a random vector. We say that $\Zvec$ belongs to the 
%\emph{sub-Exponential family} if there exists a constant $K>0$ such that
%\[
%\sup_{\|v\|_2=1}\ \|\langle v,Z\rangle\|_{\psi_1} \le K,
%\]
%where $\|W\|_{\psi_1} := \inf\{c>0 : \E \exp(|W|/c) \le 2\}$ denotes the Orlicz $\psi_1$ norm.
%\end{condition}

%\begin{remark}
%Condition~\ref{cond:subexp} covers a broad range of distributions: sub-Gaussian vectors (e.g., multivariate Gaussian, uniform on a hypercube or hypersphere), multivariate Laplace and more generally product distributions with independent sub-exponential coordinates, distributions with bounded support (e.g., uniform on a convex body), and certain elliptical distributions with sub-exponential radial components (e.g., $t_\nu$ with large~$\nu$). Thus, the sub-Exponential family is strictly larger than the sub-Gaussian family, while still ensuring the concentration of norms needed for our results.
%\end{remark}

\section{High-dimensional behaviour of the proposed method }
	\label{sec:hd-behaviour}

    %The need for high-dimensional methods for change-point detection arises in many areas of science.
    %For instance, an astrophysicist may want to detect galaxies in a small patch of sky (Enikeeva and Harchaoui, 2019). A geneticist may want to detect chromosomal copy number
    %variations (Vert and Bleakley, 2010). We often deal with high-dimensional network data, and want to find the changes in social network or traffic network patterns within a short interval of time. 
    In Section \ref{sec:algorithm}, we have seen that the proposed method can perform well in high dimensions even when the dimension exceeds the sample size. This motivates us to investigate its behaviour in the high dimension low sample size (HDLSS) asymptotic regime, where the sample size is fixed while the dimension grows to infinity. For this investigation, we assume  $\Zvec_1,\ldots,\Zvec_{\tau} \stackrel{iid}{\sim} \F_1$ and $\Zvec_{\tau+1},\ldots,\Zvec_{n} \stackrel{iid}{\sim} \F_2$, where $\F_i$ has the mean vector $\muvec_{i}$ and the scatter matrix $\sigmat_{i}$ ($i=1,2$).
We also assume that
%0.1in}
\begin{enumerate}
    \item[(A1)] Coordinate variables in $\F_i$ have uniformly bounded fourth moments.
    %0.1in}
    \item[(A2)] If $\Zvec = (Z^{(1)},\ldots,Z^{(d)})^{\top}\sim \F_r$ and $\Zvec_{*} = 
(Z_*^{(1)},\ldots,Z_*^{(d)})^{\top}\sim \F_s$ ($r,s=1,2$) are independent, 
    for ${\bf W}=\Zvec-\Zvec_*$,  
    $\sum_{q \neq q'}\big|\text{Corr}\big((W^{(q)})^2,(W^{(q')})^2\big)\big|$
    is of the order $o(d^2)$.
    %\item[(A3)] Let $\muvec_{j}$ and $\sigmat_{j}$ be the mean vector and the dispersion matrix of  $F_j$ ($1 \le j \le J$). For each $j=1,\ldots,J$, 
    %there exists a constant $\sigma_j^2$ such that $\mbox{trace}(\sigmat_J)/d \rightarrow \sigma_j^2$ 
    %as $d \rightarrow \infty$. Also, for each $i\neq j$, there exists a constant $\nu_{ji}^2$ such that $\|\muvec_j-\muvec_i\|^2/d \rightarrow \nu_{ji}^2$ as $d \rightarrow \infty$.
    %0.1in}\item[(A3)] There exist constants $\nu^2$, $\sigma_{1}^2$, and $\sigma_{2}^2$ such that $d^{-1}\big\|\muvec_1-\muvec_2\|^2 \rightarrow \nu^2$, $d^{-1} \mbox{trace}(\sigmat_{1}) \rightarrow \sigma_{1}^2$ and $d^{-1}\mbox{trace}(\sigmat_{2}) \rightarrow \sigma_{2}^2$ as $d \rightarrow \infty$. %and $\nu^2 + (\sigma_{\F_1}-\sigma_{\F_2})^2>0$. %(Here $\sigmat_{\F_1}=\var({\bf Z}_1)$ and $\sigmat_{\F_2}=\var(\Zvec_1^\prime)$.)
\end{enumerate}

Under (A1) and (A2), the weak law of large numbers (WLLN)  holds for the sequence of
possibly dependent and non-identically distributed 
random variables $\{{W^{(q)}}^2:q\ge 1\}$ (i.e., $\left|\frac{1}{d}\|{\bf W}\|^2-E\big(\frac{1}{d}\|{\bf W}\|^2\big)\right| \stackrel{P}{\rightarrow}0$ as $d \rightarrow \infty$), and (A3) gives the limiting (as $d \rightarrow \infty$) values of $E\big(\frac{1}{d}\|{\bf W}\|^2\big)$ and hence those of 
$\frac{1}{d}\|{\bf W}\|^2$ for different choices of $r$ and $s$ ($r,s=1,2$). However, if the coordinate variables are i.i.d., as in the Examples \ref{exa:1} and \ref{exa:2}, WLLN holds under the second moment assumption; (A1) and (A2) are not needed there. Assumptions (A1)-(A3) are quite common in the HDLSS literature. In addition to (A1) and (A3), \cite{hall2005geometric} assumed %the $d$-dimensional observations as time series truncated at time $d$, and in addition to (A1) and (A3), they assumed the 
a $\rho$-mixing property of the coordinate variables for distance convergence. %of the time series. %for studying the high-dimensional behaviour of different classifiers. 
 Assumption (A2), which assumes weak dependence among the coordinate variables, holds under that $\rho$-mixing condition.  Similar conditions for WLLN and the convergence of pairwise distances were also considered in \cite{ahn2007high, jung2009pca,aoshima2018} and \cite{banerjee2025high}.
{\color{red}

}

For a permutation $\pi$ of $\{1,\ldots,n\}$ and $t\in\mathcal A_n$, 
define ${\cal X}_t^{\pi}=\{\Zvec_{\pi(1)},\ldots,\Zvec_{\pi(t)}\}$
and ${\cal Y}_t^{\pi}=\{\Zvec_{\pi(t+1)},\ldots,\Zvec_{\pi(n)}\}$.
Let $k_\pi(t) :=\#\{i\le t:\pi(i)\le \tau\}$ be the number of 
observations in ${\cal X}_t^{\pi}$ from $\F_1$. So, the proportion 
of observations from $\F_1$ in ${\cal X}_t^{\pi}$ and ${\cal Y}_t^{\pi}$ 
are given by $p_\pi(t)=k_\pi(t)/t$ and $q_\pi(t)=\{\tau-k_\pi(t)\}/(n-t)$, 
respectively. For any fixed $n$ and $\tau$, define a threshold 
\begin{equation}\label{eq:rho_def}
\rho_{n,\tau}:=
-2\,\frac{w(m)}{m-1}
+\sqrt{
4\Big(\frac{w(m)}{m-1}\Big)^2
+2\Big\{
w(\tau)-\frac{w(m)}{2(m-1)^2}
\Big\}
},
\end{equation}
where $m=\lfloor n\delta_n\rfloor$ and $w(t)=t(n-t)/n^2$. Also, 
define the set of \emph{unbalanced permutations} 
$\mathcal{B}(\rho_{n,\tau}):=\Big\{\pi:\max_{t\in\mathcal A_n}
\Big|p_\pi(t)-q_\pi(t)\Big|>\rho_{n,\tau}\Big\}.$
If the cardinality of this set is smaller than $\alpha\cdot n!$, 
under (A1)--(A3), we have the HDLSS consistency of the proposed 
test. This result is stated below.

\begin{thm}
\label{thm:HDLSS-consistency}
Suppose $\Zvec_1,\ldots,\Zvec_{\tau}\stackrel{iid}{\sim}\F_1$ and
$\Zvec_{\tau+1},\ldots,\Zvec_n\stackrel{iid}{\sim}\F_2$, where 
$\F_1$ and $\F_2$ satisfy (A1)--(A3) with $\nu^2>0$ or 
$\sigma_1^2\neq\sigma_2^2$. If $\tau\in\mathcal A_n$,
\begin{enumerate}
\item[$(a)$] $\P(\widehat T_n=\tau)\to 1$ as $d\to\infty$.
\item[$(b)$] If $|\mathcal{B}(\rho_{n,\tau})|/n!\le\alpha$, then
the change-point $\tau$ is detected (by $\widehat T_n$) at level $\alpha$
$(0<\alpha<1)$ with probability converging to one as $d\to\infty$.
\end{enumerate}
\end{thm}

If $\Pi$ is uniform over all permutations of $\{1,2\ldots,n\}$, then we have $\P(\Pi\in \mathcal{B} (\rho_{n,\tau}))=| \mathcal{B} (\rho_{n,\tau})|/n!$. Clearly, this does not depend on the 
underlying distributions, and using Hoeffding's inequality for sampling 
without replacement, one gets
\begin{align}
\label{eq:Bbound}
\nonumber
& \P\bigl(\Pi \in \mathcal{B}(\rho_{n,\tau})\bigr)
\leq \sum_{t \in \mathcal{A}_n}
     \P\!\left(\left|\frac{k_\Pi(t)}{t} - \frac{\tau}{n}\right|
               > \frac{(n-t)\,\rho_{n,\tau}}{n}\right) \\
& \leq \sum_{t \in \mathcal{A}_n}
     2\exp\!\left(-\frac{2t(n-t)^2\rho_{n,\tau}^{2}}{n^{2}}\right) \le 2n\exp\!\left(-2n\,\delta_n^{2}(1-\delta_n)\,\rho_{n,\tau}^{2}\right).
\end{align}
So, for any fixed $\alpha \in (0,1)$, $|\mathcal{B}(\rho_{n,\tau})|/n!\le\alpha$ is guaranteed 
whenever $n \geq \log(2n/\alpha)/\{2\delta_n^2(1-\delta_n)
\rho_{n,\tau}^2\}$. However, this is a sufficient condition only. In practice, a much smaller value of $n$ satisfies the condition. Table~\ref{permute} below gives the exact value of $|\mathcal{B}(\rho_{n,\tau})|/n!$ based on $10^6$ Monte Carlo simulations. It clearly shows that $n=50$ 
suffices for $\alpha=0.05$ and all $\theta\in[0.1,0.9]$, with 
the required sample size increasing as $\theta$ moves away from 
$0.5$.

\begin{table}[h]
%\vspace{0.1in}
\centering\small
\setlength{\tabcolsep}{2pt}
\renewcommand{\arraystretch}{1}
\begin{tabular}{c|ccccccc}
\hline
$\theta$ & $n=30$ & $n=35$ & $n=40$ & $n=45$ & $\mathbf{n=50}$ 
         & $n=55$ & $n=60$ \\
\hline
0.1,\,0.9 & 0.145119 & 0.129557 & 0.097348 & 0.118382 
          & \textbf{0.032213} & 0.030700 & 0.023948 \\
0.2,\,0.8 & 0.213176 & 0.052443 & \textbf{0.051041} & 0.024689 
          & 0.022464 & 0.023782 & 0.006243 \\
0.4,\,0.6 & 0.118129 & \textbf{0.043077} & 0.041161 & 0.014480 
          & 0.014655 & 0.005141 & 0.005500 \\
0.5       & \textbf{0.036767} & 0.020103 & 0.002790 & 0.000636 
          & 0.000077 & 0.000036 & 0.000005 \\
\hline
\end{tabular}
%\vspace{0.1in}
\caption{
$\P(\mathcal{B}(\rho_{n,\tau}))$ computed based on $10^6$ Monte Carlo simulations for different choices 
$\theta=\tau/n
\in[0.1,0.9]$ and the sample size $n$. 
Bold entries indicate the smallest $n$ at which the probability first falls below $\alpha=0.05$.}
\label{permute}
\end{table}
% \begin{align*}
% & \Pr\!\bigl(\Pi \in \mathcal{B}(\rho_{n,\tau})\bigr)
% \leq \sum_{t \in \mathcal{A}_n}
%      \Pr\!\left(\left|\frac{K_t}{t} - \frac{\tau}{n}\right|
%                > \frac{(n-t)\,\rho_{n,\tau}}{n}\right) \\
% & \leq \sum_{t \in \mathcal{A}_n}
%      2\exp\!\left(-\frac{2t(n-t)^2\rho_{n,\tau}^{2}}{n^{2}}\right) \le 2n\exp\!\left(-2n\,\delta_n^{2}(1-\delta_n)\,\rho_{n,\tau}^{2}\right),
% \end{align*}
Theorem 2 shows that if %the average of squared location differences or the average scale difference between two high dimensional distributions are asymptotically non-negligible  (
$\nu^2>0$ or $\sigma^2_{1}\not=\sigma^2_{2}$, the proposed method successfully detects { the distributional change} %this location or scale shift, and correctly estimates the location of the true change-point %and its statistical significance 
    with high probability even when $n$ is much smaller $d$. We  observed this in Examples \ref{exa:1} and \ref{exa:2}. Now, consider two high-dimensional examples, where we have $\nu^2=0$ and $\sigma_1^2=\sigma_2^2$. 
% Updated Theorem and discussion to deal with sample size requirements for surrogate and exact maxima matching 
	\begin{example}\label{exa:3}
$\Zvec_1,\ldots,\Zvec_{25} \stackrel{iid}{\sim}\mathcal{N}({\bf 0}_{200}, 2{\bf I}_{200})$ and $\Zvec_{26},\ldots,\Zvec_{50}\stackrel{iid}{\sim}{\mathcal T}_{200}(4)$, where ${\cal T}_{d}(k)$ denotes the $d$-dimensional distribution with i.i.d. {coordinates each following} the standard $t$ distribution with $k$ degrees of freedom.
	\end{example} 

\begin{example}\label{exa:4}
$\Zvec_1,\ldots,\Zvec_{25} \stackrel{iid}{\sim}\mathcal{N}({\bf 0}_{200}, {\sigmat}_1)$ and $\Zvec_{26},\ldots,\Zvec_{50}\stackrel{iid}{\sim}\mathcal{N}({\bf 0}_{200}, {\sigmat}_2)$, where $\sigmat_1$ and $\sigmat_2$ are diagonal matrices. {The first $d/2$ diagonal elements of $\sigmat_1$ are $1$ and the rest are $3$, while  all diagonal elements of $\sigmat_2$ are $2$.}
	\end{example} 
    
\begin{figure}[t]
\centering
\setlength{\tabcolsep}{2pt} % horizontal space between panels
\renewcommand{\arraystretch}{1.2} % vertical space between panels
\begin{tabular}{cc}
  % ------- Row 1: Example 3 -------
  Example~\ref{exa:3}: $\ell_2$ distance &
  Example~\ref{exa:3}: $\ell_1$ distance \\
  \includegraphics[height=1.0in,width=.48\textwidth]{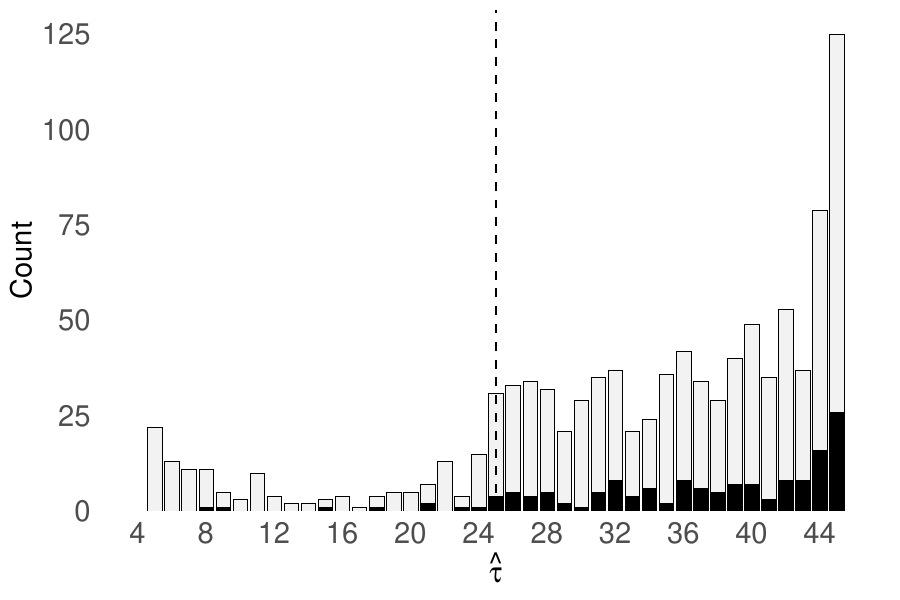} &
  %0.3in}
  \includegraphics[height=1.0in,width=.48\textwidth]{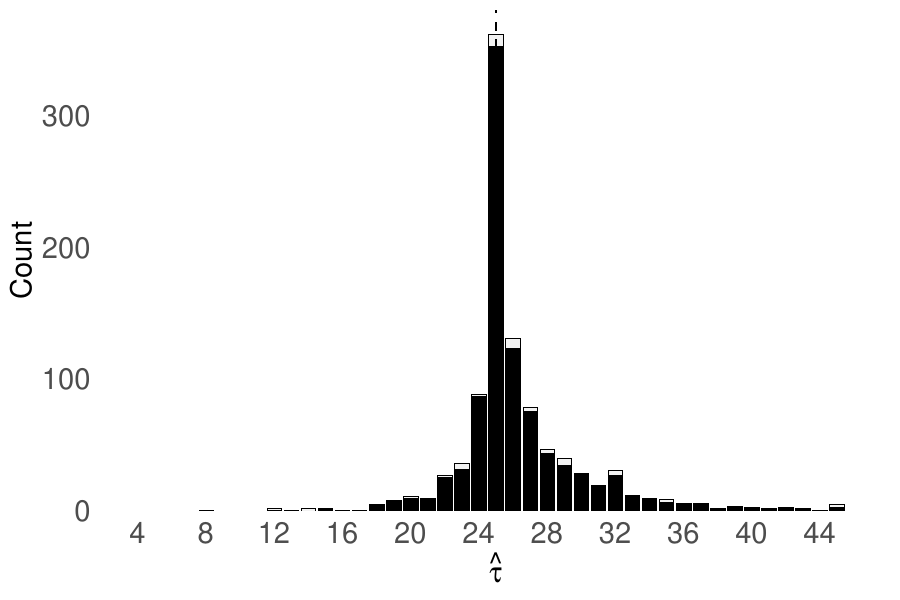} \\
  % ------- Row 2: Example 4 -------
  Example~\ref{exa:4}: $\ell_2$ distance &
  Example~\ref{exa:4}: $\ell_1$ distance \\
  \includegraphics[height=1.0in,width=.48\textwidth]{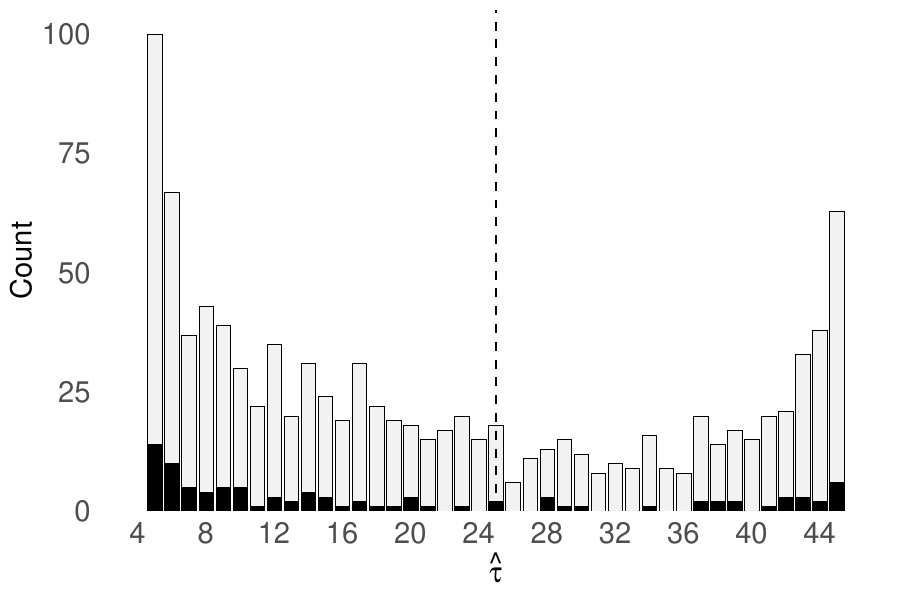} &
  \includegraphics[height=1.0in,width=.48\textwidth]{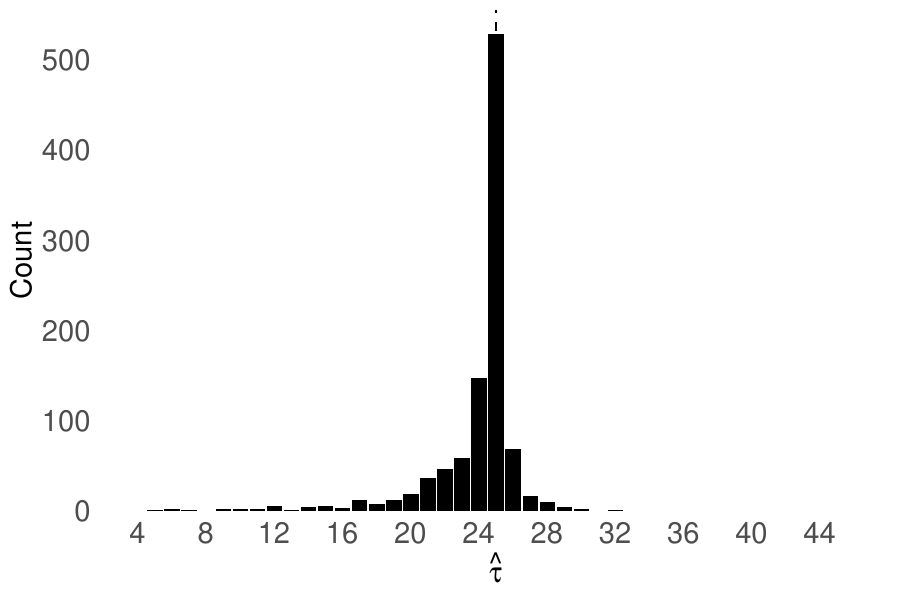} \\
\end{tabular}
%
%0.5in}
\caption{{Frequency distributions of potential (grey bar) and detected (black bar) change-points in Examples~\ref{exa:3} and~\ref{exa:4} }}.%The dashed line represents the true change-point $\tau = 25$}.}
\label{fig:bg_histogram_plots_ex3ex4}
%0.2in}
\end{figure}

Figure 3 shows that in these two examples, the proposed method based on the $\ell_2$ distance {had poor performance}. But, { using the $\ell_1$ distance, we got excellent results}. This { serves as a motivation} to investigate the high-dimensional behaviour of the method based on other distance functions. %{ This is explored} in the following subsection. 

    \subsection{Methods based on other distance functions}
    \label{sec:3.1}
	%We consider the same test and estimator as before, and consider the same algorithm using the $\ell_1$ distance . In Figure \ref{fig:bg_histogram_plots_ex3}, we observed that the $\ell_2$ distance based estimator has a very poor performance, whereas the $\ell_1$ distance based estimator concentrates around the true change point with high accuracy. Therefore, the distance function has significant influence on the empirical performance of the estimator. Note that in Example \ref{exa:3}, the distance concentration holds and we get $\nu^2 = 0$ and $\sigma_{\F_1} = \sqrt{2} = \sigma_{\F_2}$. Therefore, the condition of Theorem \ref{thm:HDLSS-consistency} does not hold. However, we know that the coordinates of $\F_1$ and $\F_2$ are distributionally different. The behaviour of the $\ell_1$ distance based algorithm can be explained by Theorem \ref{thm:HDLSS-consistency-gen}. But, we first recall the generalized distances defined in \cite{sarkar2018some}. 
	
Here we consider the class of generalised distances proposed in \cite{sarkar2018some}. The generalized distance between two $d$-dimensional random vectors $\xvec = (x^{(1)},\ldots, x^{(d)})$ and $\yvec = (y^{(1)},\ldots, y^{(d)})$ is defined as 
	$$\varphi_{h,\psi}(\xvec,\yvec) = h\bigg\{\frac{1}{d}\sum_{k=1}^d \psi\Big(\big|x^{(k)} - y^{(k)}\big|\Big)\bigg\},$$
	where $h,\psi:\R_{+}\to\R_{+}$ are continuous monotonically increasing functions
	with $h(0)=\psi(0)=0$. Note that taking $h(t)=t^{1/p}$ and $\psi(p)=t^{p}$, one gets the $\ell_{p}$ distance (up to a multiplicative factor). Not all choices of $h$ and $\psi$ lead to distance functions. But if $\psi$ satisfies the triangle inequality (e.g., $\psi(t)=1-e^{-t}$), the use of $h(t)=t$
    makes $\varphi_{h,\psi}$ a distance function.
    Using  $\varphi_{h,\psi}(\cdot,\cdot)$, we can { develop a change-point detection method}. For different choices of $t$,
    we can compute
    \begin{align*}
    T^{h,\psi}_{11}\left(t\right) & ={t \choose 2}^{-1}\hspace{-0.2in}\sum\limits_{1\leq i<j\leq t}\varphi_{h,\psi}(\Zvec_{i},\Zvec_{j}),~~T^{h,\psi}_{22}\left(t\right)={n-t \choose 2}^{-1}\hspace{-0.2in}\sum\limits_{t+1\leq i<j\leq n}\varphi_{h,\psi}(\Zvec_{i},\Zvec_{j})\\
     & ~\quad\textnormal{ }\quad T^{h,\psi}_{12}\left(t\right)=\frac{1}{t\left(n-t\right)}\sum\limits_{1\leq i\leq t}\sum\limits_{t+1\leq j\leq n}\varphi_{h,\psi}(\Zvec_{i},\Zvec_{j}) \text{   and  }
    \end{align*}
$\mathcal{D}_{n}^{{h,\psi}}(\mathcal{X}_{t},\mathcal{Y}_{t})=\big\{ T_{12}^{{h,\psi}}\left(t\right)-T_{11}^{{h,\psi}}\left(t\right)\big\} ^{2}+\big\{ T_{12}^{{h,\psi}}\left(t\right)-T_{22}^{{h,\psi}}\left(t\right)\big\} ^{2}$. {We can use}
	\begin{align*}
		\label{eq:gen-estimator}
\widehat{T}_{n}^{h,\psi}=\underset{t\in {\cal A}_n}{\text{argmax }}\frac{t\left(n-t\right)}{n^2}\mathcal{D}_{n}^{{h,\psi}}(\mathcal{X}_{t},\mathcal{Y}_{t}) \text{ and }
      \widehat{S}_{n}^{h,\psi}=\underset{t\in {\cal A}_n}{\text{max }}\frac{t\left(n-t\right)}{n^2}\mathcal{D}_{n}^{{h,\psi}}(\mathcal{X}_{t},\mathcal{Y}_{t}),
    \end{align*} 
     { to develop a change-point detection algorithm as before}. %In Figure 3, we have seen that the method based on $\ell_1$-distance had good performance in Examples 3 and 4.
     { To investigate its behaviour for HDLSS data, we make the following assumptions} %similar to (A1) and (A2).

    \begin{enumerate}
    \item[(A4)] If $\Zvec = (Z^{(1)},\ldots,Z^{(d)})^{\top}\sim \F_r$ and $\Zvec_{*} = (Z_*^{(1)},\ldots,Z_*^{(d)})^{\top}\sim \F_s$ ($r,s=1,2$) are independent $d$-dimensional random vectors, for ${\bf W}=\Zvec-\Zvec_*$ \begin{enumerate}
        %0.075in}
        \item 
$\psi(|W^{(q)}|)$'s have uniformly bounded second moments.
   %0.075in}
         \item $\sum_{q \neq q^{'}}|\text{Corr}\big(\psi(|W^{(q)}|),\psi(|W^{(q^{'})}|)\big)|$
    is of the order $o(d^2)$.
    \end{enumerate}
    \end{enumerate}

% Old Version ~~~~
% Assumption (A4) ensures %that 
% WLLN  for the sequence %of possibly dependent and non identically distributed  variables
% $\{\psi(|W^{(i)}|):i\ge1\}$.   %i.e., 
% %$\big|d^{-1}\sum_{k=1}^d\psi(|W^{(k)}|)-d^{-1}\sum_{k=1}^d\E[\psi(|W^{(k)}|)\big|\stackrel{P}{\rightarrow} 0$ as $d\rightarrow\infty$. %Again, we need these conditions to have WLLN for possibly dependent and non identically distributed random variables. 
% { If the coordinates of ${\bf W}$ are independent, condition (b) in (A4) is trivially satisfied and if $\psi$ is bounded (e.g., $\psi(t)=1-e^{-t}$) condition (a) in (A4) gets trivially satisfied as well.}
% { Let us  define $\phi^{\ast}_{h, \psi}(\F_r,\F_s)=\E[\varphi_{h,\psi}(\Zvec,\Zvec^*)]$, where $\Zvec \sim \F_r$ and $\Zvec_{*} \sim \F_s$ ($r,s=1,2$). Then, ${\cal E}_{h,\psi}^{(d)}(\F_1,\F_2)=
% 2\phi^{\ast}_{h, \psi}(\F_1,\F_2)-\phi^{\ast}_{h, \psi}(\F_1,\F_1)-\phi^{\ast}_{h, \psi}(\F_2,\F_2)$
% serves as a measure of distributional difference between two $d$-dimensional distributions $\F_1$ and $\F_2$. %using $\varphi_{h,\psi}$. %can be viewed as the energy distance (similar to ${\cal E}(F_1,F_2)$ discussed in Section 1)
% In this context, we have an interesting result from \cite{sarkar2018some}.}

% New Version ~~~~
Assumption~(A4) ensures WLLN for the sequence of possibly dependent 
and non-identically distributed variables $\{\psi(|W^{(q)}|):q\ge1\}$, i.e. as $d\to\infty$
$\bigl|d^{-1}\sum_{q=1}^{d}\psi(|W^{(q)}|)
-d^{-1}\sum_{q=1}^{d}\E\psi(|W^{(q)}|)\bigr|
\overset{P}{\to} 0$.
If the coordinates of $\mathbf{W}$ are independent, condition~(b) 
in~(A4) is trivially satisfied, and if $\psi$ is bounded 
(e.g., $\psi(t)=1-e^{-t}$), condition~(a) is trivially satisfied as well.

Define  $\phi^*_{h,\psi}(\F_r,\F_s)
=h\bigl(\frac{1}{d}\sum_{q=1}^{d}\E\psi(|Z^{(q)}-Z_*^{(q)}|)\bigr)$,
where $\Zvec\sim\F_r$ and $\Zvec_*\sim\F_s$ ($r,s=1,2$) are independent.
By~(A4) and the continuous mapping theorem, we have
$\varphi_{h,\psi}(\Zvec,\Zvec^*) - \phi^*_{h,\psi}(\F_r,\F_s)\overset{P}{\to}0$
as $d\to\infty$. Note that,
$\mathcal{E}^{(d)}_{h,\psi}(\F_1,\F_2)
=2\phi^*_{h,\psi}(\F_1,\F_2)
-\phi^*_{h,\psi}(\F_1,\F_1)
-\phi^*_{h,\psi}(\F_2,\F_2)$
serves as a measure of distributional difference between $\F_1$ 
and~$\F_2$. In this context, we have the following result 
from~\cite{sarkar2018some}.

\begin{lem}
		\label{lem:sarkar}
		Suppose that $h$ is concave and  {$\psi^{'}(t)/t$} is a non-constant completely
		monotone function. Then $\mathcal{E}^{(d)}_{h,\psi}\left(\F_1,\F_2\right)\geq 0$
		and equality holds if and only if $\F_1$ and $\F_2$ have the same one dimensional marginal
		distributions.
	\end{lem}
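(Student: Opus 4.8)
The plan is to read $\mathcal{E}^{(d)}_{h,\psi}(\F_1,\F_2)$ as a quadratic form in the finite signed measure $\Lambda=\F_1-\F_2$, which has total mass zero, and to prove that the kernel $\varphi_{h,\psi}$ on $\mathbb{R}^{d}\times\mathbb{R}^{d}$ is conditionally negative definite. Expanding the three terms in the definition and using $\Lambda(\mathbb{R}^{d})=0$ gives the identity
\[
\mathcal{E}^{(d)}_{h,\psi}(\F_1,\F_2)=-\iint \varphi_{h,\psi}(\xvec,\yvec)\,d\Lambda(\xvec)\,d\Lambda(\yvec),
\]
so nonnegativity is exactly the statement that $\iint \varphi_{h,\psi}\,d\Lambda\,d\Lambda\le 0$ for every such $\Lambda$. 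Everything then reduces to producing the right integral representations and reading off (strict) positive definiteness at each stage.

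First I would set up the one-dimensional ingredient. Because $\psi'(t)/t$ is completely monotone, the map $s\mapsto\psi(\sqrt{s})$ is a Bernstein function, which yields a representation $\psi(t)=\beta t^{2}+\int_{0}^{\infty}\bigl(1-e^{-s t^{2}}\bigr)\,\omega(ds)$ with $\beta\ge 0$ and $\omega$ a measure on $(0,\infty)$; the ``non-constant'' hypothesis forces $\omega\neq 0$. Since every Gaussian kernel $(u,v)\mapsto e^{-s(u-v)^{2}}$ is strictly positive definite on $\mathbb{R}$, this representation makes $(u,v)\mapsto\psi(|u-v|)$ conditionally negative definite on $\mathbb{R}$, and therefore the coordinatewise average $K_{0}(\xvec,\yvec):=\tfrac1d\sum_{k=1}^{d}\psi\bigl(|x^{(k)}-y^{(k)}|\bigr)$ is conditionally negative definite on $\mathbb{R}^{d}$; note also $K_{0}\ge 0$ and $K_{0}(\xvec,\xvec)=0$.

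Next I would push this through $h$. Using that $h$ is concave, increasing and $h(0)=0$, I would invoke the corresponding Bernstein-type representation $h(x)=b\,x+\int_{0}^{\infty}\bigl(1-e^{-\lambda x}\bigr)\,\Pi(d\lambda)$, so that
\[
\varphi_{h,\psi}(\xvec,\yvec)=h\bigl(K_{0}(\xvec,\yvec)\bigr)=b\,K_{0}(\xvec,\yvec)+\int_{0}^{\infty}\Bigl(1-\prod_{k=1}^{d}e^{-(\lambda/d)\psi(|x^{(k)}-y^{(k)}|)}\Bigr)\,\Pi(d\lambda).
\]
For each fixed $\lambda>0$, every factor $e^{-(\lambda/d)\psi(|x^{(k)}-y^{(k)}|)}$ is positive definite on $\mathbb{R}$ by Schoenberg's theorem applied to the conditionally negative definite kernel of the previous step, hence the product is positive definite on $\mathbb{R}^{d}$ and $1-\prod_{k}e^{-(\lambda/d)\psi(\cdot)}$ is conditionally negative definite. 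Combined with the conditional negative definiteness of $K_{0}$, this shows $\varphi_{h,\psi}$ is conditionally negative definite, and therefore $\mathcal{E}^{(d)}_{h,\psi}(\F_{1},\F_{2})\ge 0$.

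For the equality statement I would carry the equality case back through the two representations: $\mathcal{E}^{(d)}_{h,\psi}(\F_{1},\F_{2})=0$ forces $\iint K_{0}\,d\Lambda\,d\Lambda=0$ (when $b>0$) and $\iint\prod_{k}e^{-(\lambda/d)\psi(|x^{(k)}-y^{(k)}|)}\,d\Lambda\,d\Lambda=0$ for $\Pi$-almost every $\lambda$; since $\omega\neq 0$, the strictly positive definite Gaussian kernels appearing inside these identities let one conclude, coordinate by coordinate and against a separating family, that the marginal differences $\F_{1}^{(k)}-\F_{2}^{(k)}$ vanish, i.e. $\F_{1}$ and $\F_{2}$ have the same one-dimensional marginals; the reverse implication follows by retracing the same identities. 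The step I expect to be the real obstacle is the third one: conditional negative definiteness is \emph{not} preserved by an arbitrary increasing transformation, so concavity of $h$ (with $h(0)=0$) has to be used in an essential way, precisely through the representation that rewrites $h\circ K_{0}$ as a nonnegative mixture of positive definite building blocks; keeping the strictness bookkeeping intact through this representation, so that the non-constancy hypothesis on $\psi'(t)/t$ actually bites in the equality case, is the part that needs the most care.
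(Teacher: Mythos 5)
The paper does not actually prove this lemma: it is quoted from Sarkar and Ghosh (2018), and the intended argument is far more elementary than the one you outline. Write $a_{rs}=\frac1d\sum_{q=1}^d \E\,\psi\big(|Z^{(q)}-Z_*^{(q)}|\big)$ for independent $Z\sim F_r$, $Z_*\sim F_s$. Your first ingredient is exactly right and is the only place the hypothesis on $\psi$ enters: complete monotonicity of $\psi'(t)/t$ gives $\psi(t)=\beta t^{2}+\int_0^\infty\big(1-e^{-st^{2}}\big)\,\omega(ds)$ with $\omega\neq0$, so for each coordinate $q$ the one-dimensional $\psi$-energy $2\E\psi(|X^{(q)}-Y^{(q)}|)-\E\psi(|X_1^{(q)}-X_2^{(q)}|)-\E\psi(|Y_1^{(q)}-Y_2^{(q)}|)$ is nonnegative and vanishes iff the $q$-th marginals coincide. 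Averaging over $q$ gives $a_{12}\ge\tfrac{a_{11}+a_{22}}{2}$, and then monotonicity plus concavity of $h$ give $2h(a_{12})\ge 2h\big(\tfrac{a_{11}+a_{22}}{2}\big)\ge h(a_{11})+h(a_{22})$, the equality case unwinding (via the strictness supplied by $\omega\neq0$) to equality of all one-dimensional marginals. Concavity of $h$ is used only through this Jensen-type step on deterministic averages, not through any positivity property of the kernel $h\circ K_0$.

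Your route has a genuine gap exactly where you feared. A concave increasing $h$ with $h(0)=0$ does \emph{not} admit the representation $h(x)=bx+\int_0^\infty(1-e^{-\lambda x})\Pi(d\lambda)$: that characterizes Bernstein functions (completely monotone derivative), a strict subclass of concave functions — $h(x)=\min(x,1)$ already fails it — and conditional negative definiteness is not preserved by composition with a general concave increasing map (for instance $\min\{(x-y)^2,1\}$ is not CND on $\R$, since $(1-u^{2})_+$ is not positive definite). So the key step ``$\varphi_{h,\psi}$ is CND'' is unsupported under the stated hypotheses. There is also a structural mismatch: even for a genuine Bernstein $h$ with $\Pi\neq0$, your mixture terms are product kernels $\prod_k e^{-(\lambda/d)\psi(|x^{(k)}-y^{(k)}|)}$, which see the full joint law; the equality case of your quadratic form would then force $F_1=F_2$, not merely equal one-dimensional marginals, and conversely two laws with identical marginals but different copulas can make the $\E$-based quantity strictly positive (e.g. bivariate Gaussians with standard margins and different correlations, $\psi(t)=t$, $h(t)=1-e^{-t}$). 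This indicates that the lemma concerns the version of $\mathcal{E}^{(d)}_{h,\psi}$ in which $h$ sits \emph{outside} the coordinate-averaged expectations (this is how the quantity is used in the Supplement's proof of Theorem 3, and it agrees with the main-text display only for linear $h$ or in the $d\to\infty$ limit under (A4)); for that quantity only one-dimensional marginals enter, the concavity argument above is the right tool, and a CND/MMD-type argument cannot deliver the stated ``if and only if''.
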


%    Now, consider independent random vectors $\Xvec_1,\Xvec_2\overset{iid}{\sim}\F_{1}$ and $\Yvec_1,\Yvec_2\overset{iid}{\sim}\F_{2}$, and define $D^{h,\psi}(F_1,F_2)=\|\muvec^{h,\psi}_{1}-\muvec^{h,\psi}_{2}\|^2$, where
     %$\muvec^{h,\psi}_{1}=\big(\E\varphi_{h,\psi}(\Xvec_1,\Xvec_2),\E\varphi_{h,\psi}(\Xvec_1,\Yvec_2)\big)^{\top}$ and ${\muvec}^{h,\psi}_{2}=\big(\E\varphi_{h,\psi}(\Yvec_1,\Xvec_2),\E\varphi_{h,\psi}(\Yvec_1,\Yvec_2)\big)^{\top}$. For suitable choices of $h$ and $\psi$, we have a result similar to Lemma 1, which is stated below.
     
    %\begin{lem}
    %\label{lem:char} Let %$\Xvec_1,\Xvec_2\overset{iid}{\sim}\F_{1}$
    %and $\Yvec_1,\Yvec_2\overset{iid}{\sim}\F_{2}$ be four independent random vectors. Also assume $\E\varphi_{h,\psi}(\Xvec_1,\Xvec_2),\E\varphi_{h,\psi}(\Xvec_1,\Yvec_1),\E\varphi_{h,\psi}(\Yvec_1,\Yvec_1)<\infty$.
    %Then, $D^{h,\psi}(F_1,F_2)=0$ if and only if all one-dimensional marginal distributions corresponding to $F_1$ and $F_2$ are identical. 
    %\end{lem}

Note that $\psi(t)=t^2$ used in $\ell_2$ distance does not satisfy this property, but $\psi(t)=1-e^{-t}$ and $\psi(t)=t$ used in $\ell_1$ distance satisfy this. Figure \ref{exa:3,4:exp-dist} shows that in Examples \ref{exa:3} and \ref{exa:4},  the method based on $\psi(t)=1-e^{-t}$ and $h(t)=t$ had { an edge over that} based on the $\ell_1$ distance. { For methods based on these generalised distances we have the following result.}

%. In this paper we consider $\psi(t)=\sqrt{t},1-\exp(-t/2)$ as the main two choices of the function $\psi$ along with $h(t)=t$ in each of them. As $\psi(t)=\sqrt{t}$ leads to the $\ell_1$ distance whose behaviour for Example \ref{exa:3} has already been observed, below we show the behaviour of proposed statistic for the other choice.

     \begin{figure}[t]
    \centering
    \setlength{\tabcolsep}{2pt} % ← horizontal space between panels
    \renewcommand{\arraystretch}{1} % ← vertical space between panels
    \begin{tabular}{cc}
    Example~\ref{exa:3} & Example~\ref{exa:4} \\
    \includegraphics[height=1.10in,width=0.48\textwidth]{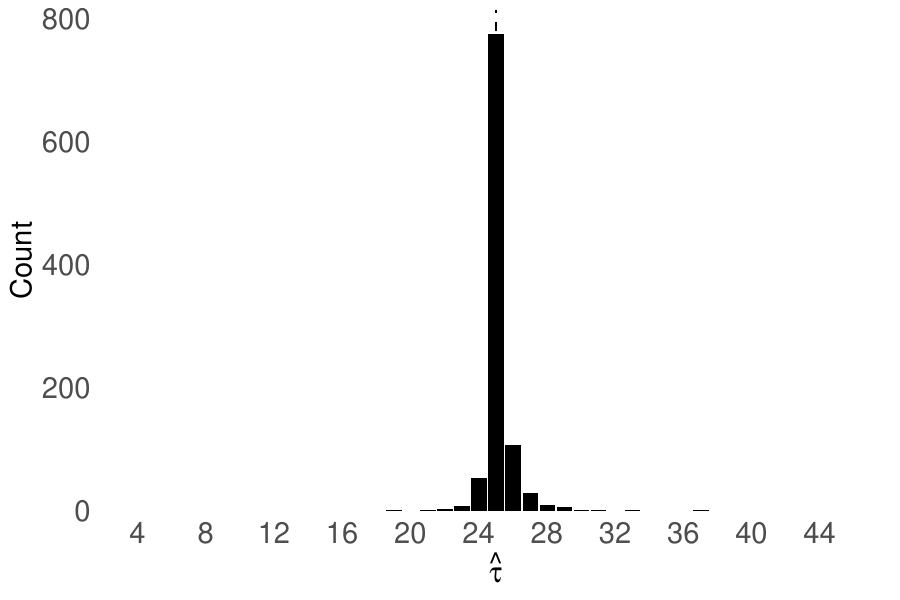} &
    \includegraphics[height=1.10in,width=0.48\textwidth]{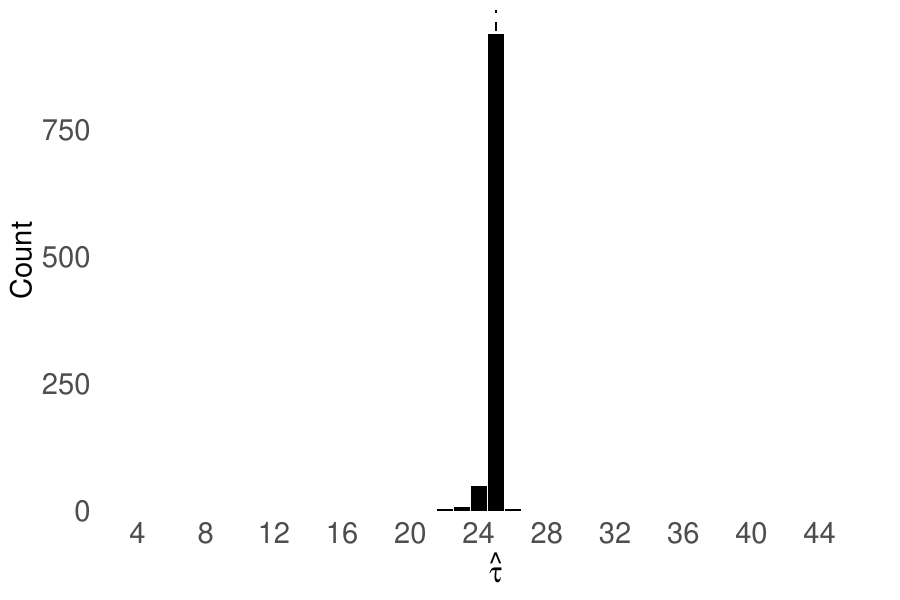} \\
    \end{tabular}
    %
    %0.4em}
          \caption{{Frequency distribution of the }estimated change-points for Examples~\ref{exa:3} and \ref{exa:4} when $h(t)=t,\;\psi(t)=1-\exp(-t/2)$.}
    \label{exa:3,4:exp-dist}
    \end{figure}
    
    %From Figure \ref{exa:3:exp-dist}, we can see a similar behaviour of the proposed algorithm using the exponential distance as in the $\ell_1$ distance based method. 
	%	When $\Zvec_{i}$ and $\Zvec_{j}$ are independent, under (A4) we have $$\varphi_{h,\psi}\left(\Zvec_{i},\Zvec_{j}\right)-h\left\{ \frac{1}{d}\sum\limits_{k=1}^{d}\E\psi\left(\left|Z_{i}^{(k)}-Z_{j}^{(k)}\right|^{2}\right)\right\} \overset{\P}{\longrightarrow}0\text{ as }d\to\infty$$ and denote the quantities $h\left\{ \frac{1}{d}\sum\limits_{k=1}^{d}\E\psi\left(\left|Z_{i}^{(k)}-Z_{j}^{(k)}\right|^{2}\right)\right\} $ as $\varphi_{h,\psi}^{*}\left(F,F\right)$ if $\Zvec_{i},\Zvec_{j}\sim F$,	as $\varphi_{h,\psi}^{*}\left(G,G\right)$ if $\Zvec_{i},\Zvec_{j}\sim G$ and as $\varphi_{h,\psi}^{*}\left(F,G\right)$ otherwise. And based on these, define the energy-type divergence measure as follows :
	
	%Using these generalized distances, one can define $\mathcal{E}_{h,\psi}^{(d)}\left(\F_1,\F_2\right)=2\varphi_{h,\psi}^{*}\left(\F_1,\F_2\right)-\varphi_{h,\psi}^{*}\left(\F_1,\F_1\right)-\varphi_{h,\psi}^{*}\left(\F_2,\F_2\right),$ which has an interesting property attributed to \citet{sarkar2018some}.

    \begin{thm}
        \label{thm:HDLSS-gen-consistency} 
        Suppose $\Zvec_1,\ldots, \Zvec_{\tau}\stackrel{iid}{\sim}\F_1$ and $\Zvec_{\tau +1},\ldots, \Zvec_{n}\stackrel{iid}{\sim}\F_2$, where $\F_1$ and $\F_2$ satisfy (A4). Also, assume that $h$ and $\psi$ satisfy the conditions mentioned in Lemma 2, 
        $\underset{d\to\infty}{\lim\,}\mathcal{E}_{h,\psi}^{(d)}\left(\F_1,\F_2\right)>0$ and $\tau \in {\cal A}_n$. Then, 
\begin{enumerate}
    \item[$(a)$] $P({\widehat{ T}}^{h,\psi}_n=\tau)$ converges to $1$ as $d$ tends to infinity.
     %0.2in}
     
    \item[$(b)$] %If $\alpha\in(0,1)$ is the level associated with the hypothesis testing problem and ${T \choose \tau}\geq\frac{2}{\alpha}$, a chage-point is detected with probability tending to one as $d$ grows to infinity. 
     { If  $|\mathcal{B}({\rho_{n,\tau}})|/n!\le \alpha$, (as in Theorem \ref{thm:HDLSS-consistency}), { the change-point $\tau$ is detected (by ${\widehat T}^{h,\psi}_n$) at level $\alpha$ ($0<\alpha<1$) with probability  converging to one as $d$ grows to infinity.} }
\end{enumerate}
\end{thm}
%%3em}
    {  The assumption $\lim_{d \rightarrow \infty} {\cal E}^{(d)}_{h, \psi}(\F_1, \F_2)>0$ %for studying the high-dimensional behavior of the method based on $\varphi_{h,\psi}$. It which simply says
    implies that the average of distributional differences along one-dimensional marginals of   $\F_1$ and $\F_2$ is asymptotically non-negligible. This is natural for several high-dimensional alternatives.}
In Examples \ref{exa:3} and \ref{exa:4}, %where $F_1$ and $F_2$ differ in their one-dimensional marginals, 
Assumption (A4) and the condition $\underset{d\to\infty}{\lim\,}\mathcal{E}_{h,\psi}^{(d)}\left(\F_1,\F_2\right)>0$ holds for both $\psi(t) = t$ and $\psi(t)=1-\exp(-t)$ with $h(t)=t$. As a result, methods based on corresponding distances worked well (henceforth, referred to as Proposed-$\ell_1$ and Proposed-exp). But, for the method based on the $\ell_2$ distance (henceforth, referred to as Proposed-$\ell_2$), we have $\psi(t)=t^2$, $h(t)=t^{1/2}$, %{even though Assumption (A4) is satisfied,} we have
and hence, $\underset{d\to\infty}{\lim\,}\mathcal{E}_{h,\psi}^{(d)}\left(F_1,F_2\right)=\sqrt{\nu^2+\sigma_1^2+\sigma_2^2} -\sigma_1\sqrt{2}-\sigma_2\sqrt{2}>0$ holds if and only if $\nu^2>0$ or $\sigma_1^2\neq\sigma_2^2$ (as in Theorem \ref{thm:HDLSS-consistency}). This was not the case in these two examples.
%But this was not the case in Examples 3 and 4. This explains the poor performance of the resulting method.
\subsection{Performance under sparse signals}
Theorem \ref{thm:HDLSS-consistency} shows the high-dimensional consistency of Proposed-$\ell_2$ when %we have seen that the $\ell_2$ distance-based method is consistent if 
$\mu^2>0$ or $\sigma_1\not=\sigma_2$. This assumes that $\|\muvec_{1,d}-\muvec_{2,d}\|^2$ or $\left|\text{trace}(\sigmat_{1,d})-\text{trace}(\sigmat_{2,d})\right|$ grows at least linearly with $d$. However, it is possible to relax this condition in some situations. 
For $\Zvec\sim \F_r$ and $\Zvec_* \sim \F_s$ ($r,s=1,2$), let $v^2_{r,s,d}$ be  the asymptotic order of $\text{Var}\left(\|\Zvec-\Zvec_*\|^2\right)$ (i.e., Var$\left(\|\Zvec-\Zvec_*\|^2\right) \asymp v^2_{r,s,d}$) and $v_d^2=\max\{v^2_{1,1,d},v^2_{1,2,d},v^2_{2,2,d}\}$.
For the high-dimensional  consistency of Proposed-$\ell_2$, we only need the signal (either $\|\muvec_{1,d}-\muvec_{2,d}\|^2$ or $|\text{trace}(\sigmat_{1,d})-\text{trace}(\sigmat_{2,d})|$) to be of higher asymptotic order than that of the noise or stochastic variation $v_d$ (see Theorem \ref{thm:sparse1} below). 
So, if the coordinate variables in $\F_1$ and $\F_2$ are independent or $m$-dependent (i.e., $v_d 
 \asymp d^{1/2}$), it is enough to have $\|\muvec_{1,d}-\muvec_{2,d}\|^2 \gg d^{1/2}$ or $|\text{trace}(\sigmat_{1,d})-\text{trace}(\sigmat_{2,d})|\gg d^{1/2}$ (here $a_d \gg b_d$ means $a_d$ is of higher asymptotic order than $b_d$). %grows faster than $d^{1/2}$. %{instead of the conditions mentioned in Assumption (A3).}
% We do not need the convergence of $d^{-1}\|\muvec_{1,d}-\muvec_{2,d}\|^2$ or $d^{-1}|\text{trace}(\sigmat_{1,d})-\text{trace}(\sigmat_{2,d})|$ to a positive constant.

\begin{thm}
  \label{thm:sparse1}
  Suppose $\Zvec_1,\ldots,\Zvec_{\tau} \stackrel{iid}{\sim} \F_1$ and 
  $\Zvec_{\tau+1},\ldots,\Zvec_{n} \stackrel{iid}{\sim} \F_2$, where $\F_1$ 
  and $\F_2$ satisfy (A1)--(A2). Also assume 
  $\liminf_{d \rightarrow \infty} \min\{\mathrm{trace}(\sigmat_{1,d}),\,
  \mathrm{trace}(\sigmat_{2,d})\}/d > 0$.
  If $\tau \in \mathcal{A}_n$ and $\|\muvec_{1,d} - \muvec_{2,d}\|^2/v_d 
  \rightarrow \infty$ or $|\mathrm{trace}(\sigmat_{1,d}) - 
  \mathrm{trace}(\sigmat_{2,d})|/v_d \rightarrow \infty$ as $d \rightarrow 
  \infty$, we have the following results.
  \begin{enumerate}
    \item[$(a)$] $P(\widehat{T}_n = \tau)$ converges to $1$ as $d$ tends to 
    infinity.
    \item[$(b)$] If $|\mathcal{B}(\rho_{n,\tau})|/n! \le \alpha$,
    the change-point $\tau$ is detected (by $\widehat{T}_n$) at level $\alpha$
    $(0 < \alpha < 1)$ with probability converging to one as $d$ grows to
    infinity.
  \end{enumerate}
\end{thm}

% \begin{thm}
%   \label{thm:sparse1}
%     Suppose $\Zvec_1,\ldots,\Zvec_{\tau} \stackrel{iid}{\sim} \F_1$ and 
%     $\Zvec_{\tau+1},\ldots,\Zvec_{n} \stackrel{iid}{\sim} \F_2$, where $\F_1$ 
%     and $\F_2$ satisfy (A1)-(A2). Also assume 
%     $\liminf_{d \rightarrow \infty} \min\{\mathrm{trace}(\sigmat_{1,d}),
%     \mathrm{trace}(\sigmat_{2,d})\}/v_d>0$ and
%     $\liminf_{d\rightarrow\infty}\min_{q=1,\ldots,d}\min_{r=1,2}\,
%     \sigmat_{r,d}(q,q)>0$.
%     If $\tau\in{\cal A}_n$ and $\|\muvec_{1,d}-\muvec_{2,d}\|^2/v_d 
%     \rightarrow \infty$ or  $|\mathrm{trace}(\sigmat_{1,d})-
%     \mathrm{trace}(\sigmat_{2,d})|/v_d \rightarrow \infty$ as 
%     $d \rightarrow \infty$, we have the following results.
%     \begin{enumerate}
%     \item[$(a)$] $P({\widehat{ T}}_n=\tau)$ converges to $1$ as $d$ tends 
%     to infinity.
%     %0.2in}
%     {
%     \item[$(b)$] If  $|\mathcal{B}({\rho_{n,\tau}})|/n!\le \alpha$, 
%     ${\widehat T}_n$ is detected as the change-point at level $\alpha$ 
%     ($0<\alpha<1$) with probability converging to one as $d$ grows 
%     to infinity.}
% \end{enumerate}
% \end{thm}

Similarly, for Proposed-$\ell_1$ and Proposed-exp, it is possible to relax the condition $\lim\inf_{d \rightarrow \infty}{{\cal E}}^{(d)}_{h,\psi}(\F_1,\F_2)>0$ used in Theorem \ref{thm:HDLSS-gen-consistency}.
Since $h(t)=t$ in these cases, we have ${{\cal E}}^{(d)}_{h,\psi}(\F_1,\F_2)=\frac{1}{d}{\cal E}_{\psi}(\F_1,\F_2)$ where
\begin{align*}
{\cal E}_{\psi}(\F_1,\F_2) =\sum_{q=1}^{d}{\cal E}_{\psi}(\F_1^{(q)},\F_2^{(q)})=\sum_{q=1}^{d}\left[ 2E\psi|X_1^{(q)}-Y_1^{(q)}|-E\psi|X_1^{(q)}-X_2^{(q)}|-E\psi|Y_1^{(q)}-Y_2^{(q)}|\right],    
\end{align*}
for $\Xvec_1,\Xvec_2$ $\stackrel{iid}{\sim}\F_1$ $\Yvec_1,\Yvec_2\stackrel{iid}{\sim}\F_2$.  So, Theorem \ref{thm:HDLSS-gen-consistency} assumes that ${\cal E}_{\psi}(\F_1,\F_2)$ increases at least linearly with $d$. Now, for 
$\Zvec\sim \F_r$ and $\Zvec^*\sim \F_s$ ($r,s=1,2$), let $v^2_{\psi,r,s,d}$ be the asymptotic order of 
$\text{Var}\big(\sum_{q=1}^{d}
\psi\big|\Zvec^{(q)}-\Zvec^{*(q)}|\big)$ and $v^2_{\psi,d}=\max\{v^2_{\psi,1,1,d},v^2_{\psi,1,2,d},v^2_{\psi,2,2,d}\}$. %In view of (A2), one can assume that $v_{\psi,d}/d \rightarrow 0$ as $d \rightarrow \infty$. 
Theorem \ref{thm:sparse2} shows that if  ${\cal E}_{\psi}(\F_1,\F_2) \gg v_{\psi,d}$, we have high dimensional consistency of Proposed-$\ell_1$ and Proposed-exp. So, if the coordinate variables in $\F_1$ and $\F_2$ are independent or $m$-dependent, it is  enough to have ${\cal E}_{\psi}(\F_1,\F_2)\gg d^{-1/2}$. One does not need $\lim\inf_{d \rightarrow \infty} {\cal E}^{(d)}_{h,\psi}(\F_1,\F_2)>0$.  

\begin{thm}
  \label{thm:sparse2}
    Suppose $\Zvec_1,\ldots,\Zvec_{\tau} \stackrel{iid}{\sim} \F_1$ and $\Zvec_{\tau+1},\ldots,\Zvec_{n} \stackrel{iid}{\sim} \F_2$, where  $\F_1$ and $\F_2$ satisfy Assumption (A4). Also assume that ${\cal E}_{\psi}(\F_1,\F_2)/ v_{\psi,d} \rightarrow \infty$ as $d \rightarrow \infty$. Then for Proposed-$\ell_1$ and Proposed-exp, we have the following results.
    \begin{enumerate}
    \item[$(a)$] $P({\widehat{ T}}_n^{h,\psi}=\tau)$ converges to $1$ as $d$ tends to infinity.
    %0.2in}
   \item[$(b)$] If  $|\mathcal{B}({\rho_{n,\tau}})|/n!\le \alpha$, the change-point $\tau$ is detected (by ${\widehat T}^{h,\psi}_n$) at level $\alpha$ ($0<\alpha<1$) with probability  converging to one as $d$ grows to infinity.
\end{enumerate}
\end{thm}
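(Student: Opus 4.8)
The plan is to follow the proof of Theorem~3 almost verbatim, the single new ingredient being that the weak law of large numbers for $\{\psi(|W^{(q)}|)\}$ guaranteed by (A4) --- which there only yields $T^{h,\psi}_{ij}(t)\to\E[T^{h,\psi}_{ij}(t)]$ in probability --- must be sharpened to a rate, so that a signal that may itself vanish can still be separated from the noise. Throughout, $n$ and $\tau$ are fixed while $d\to\infty$, so $\mathcal{A}_n$ is a fixed finite set and every $o_P(\cdot)$ statement below is automatically uniform in $t\in\mathcal{A}_n$. Write $\phi_{rs}=\phi^{\ast}_{h,\psi}(F_r,F_s)$ for $r,s\in\{1,2\}$, and put $a=a_d=\phi_{12}-\phi_{11}$, $b=b_d=\phi_{12}-\phi_{22}$; since $h(t)=t$ and, for BG-$\ell_1$ and BG-exp, $\psi$ and $h$ satisfy the hypotheses of Lemma~\ref{lem:sarkar}, we have $a+b=\mathcal{E}^{(d)}_{h,\psi}(F_1,F_2)=\mathcal{E}_\psi(F_1,F_2)/d\ge 0$ --- and no sign information on $a$ or $b$ individually is available.

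First I would establish concentration with a rate. Because $h(t)=t$, each of $T^{h,\psi}_{11}(t),T^{h,\psi}_{12}(t),T^{h,\psi}_{22}(t)$ is a $d^{-1}$-scaled (one- or two-sample) $U$-statistic in the kernel $\varphi_{h,\psi}(\xvec,\yvec)=d^{-1}\sum_q\psi(|x^{(q)}-y^{(q)}|)$, whose mean is the affine combination of $\phi_{11},\phi_{12},\phi_{22}$ obtained by counting the $F_1$--$F_1$, $F_1$--$F_2$ and $F_2$--$F_2$ pairs it averages. By (A4)(a)--(b) and the definition of $v_{\psi,d}$, $\Var\!\big(\varphi_{h,\psi}(\Zvec,\Zvec^{*})\big)=O(v_{\psi,d}^2/d^2)$, and since the number of summands in each $U$-statistic is fixed, a standard $U$-statistic variance bound together with Chebyshev's inequality gives $T^{h,\psi}_{ij}(t)=\E[T^{h,\psi}_{ij}(t)]+O_P(v_{\psi,d}/d)$. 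Substituting into $\mathcal{D}^{h,\psi}_{n}(\mathcal{X}_t,\mathcal{Y}_t)=\{T^{h,\psi}_{12}(t)-T^{h,\psi}_{11}(t)\}^2+\{T^{h,\psi}_{12}(t)-T^{h,\psi}_{22}(t)\}^2$ and expanding, $\mathcal{D}^{h,\psi}_{n}(\mathcal{X}_t,\mathcal{Y}_t)=L(t)+O_P\!\big((|a|+|b|)v_{\psi,d}/d+v_{\psi,d}^2/d^2\big)$, where $L(t)$ is an explicit quadratic form in $(a,b)$ (computable from the same pair counts) with $L(\tau)=a^2+b^2$, because at $t=\tau$ the two bracketed differences have means exactly $a$ and $b$. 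Since $a^2+b^2\ge\tfrac12(a+b)^2=\mathcal{E}_\psi(F_1,F_2)^2/(2d^2)$, the hypothesis $\mathcal{E}_\psi(F_1,F_2)/v_{\psi,d}\to\infty$ forces $v_{\psi,d}/d=o(\sqrt{a^2+b^2})$, whence $\mathcal{D}^{h,\psi}_{n}(\mathcal{X}_t,\mathcal{Y}_t)=L(t)+o_P(a^2+b^2)$ uniformly in $t\in\mathcal{A}_n$.

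The substantive step --- and the one I expect to be the main obstacle --- is purely algebraic and is exactly the computation already carried out in the proof of Theorem~3 (and, for the $\ell_2$ case, Theorem~2): the limiting profile $t\mapsto \tfrac{t(n-t)}{n^2}L(t)$ is \emph{strictly} maximised at $t=\tau$, i.e.\ for every $t\in\mathcal{A}_n$ with $t\neq\tau$ the quadratic form $\tfrac{\tau(n-\tau)}{n^2}L(\tau)-\tfrac{t(n-t)}{n^2}L(t)$ in $(a,b)$ is positive definite. The bookkeeping is unenlightening and must be done using only $a+b\ge 0$, with no help from the signs of $a$ or $b$ separately; by the symmetry of the configuration under reversal of the sequence (which swaps $F_1\leftrightarrow F_2$ and $(a,b)\leftrightarrow(b,a)$) it suffices to treat $t<\tau$, where, writing $k=\tau-t$ and $m=n-t$, the form reduces to a $2\times2$ positive-definiteness check (its discriminant being negative for every admissible $k,m$). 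Granting this, there is $c=c(n,\tau)>0$ with $\tfrac{\tau(n-\tau)}{n^2}L(\tau)-\tfrac{t(n-t)}{n^2}L(t)\ge c(a^2+b^2)$ for all $t\neq\tau$, and combining with the previous paragraph, with probability tending to one $\tfrac{\tau(n-\tau)}{n^2}\mathcal{D}^{h,\psi}_{n}(\mathcal{X}_\tau,\mathcal{Y}_\tau)$ strictly exceeds $\max_{t\neq\tau}\tfrac{t(n-t)}{n^2}\mathcal{D}^{h,\psi}_{n}(\mathcal{X}_t,\mathcal{Y}_t)$; this gives part~(a) and also shows $\widehat{S}^{h,\psi}_{n}=\tfrac{\tau(n-\tau)}{n^2}(a^2+b^2)(1+o_P(1))\asymp a^2+b^2$.

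Finally, for part~(b) I would re-run the permutation argument used for part~(b) of Theorems~3 and~4, now with the above rates. Under a random permutation $\pi$ of $\Zvec_1,\dots,\Zvec_n$, unless $\{\pi(1),\dots,\pi(\tau)\}$ or the complementary block of positions coincides with $\{1,\dots,\tau\}$, every split at every $t$ separates two mixtures of $F_1$ and $F_2$ whose limiting value of $\tfrac{t(n-t)}{n^2}\mathcal{D}^{h,\psi}_{n}$ stays below $\tfrac{\tau(n-\tau)}{n^2}(a^2+b^2)$ by a factor bounded away from $1$ (again by the profile computation of the previous paragraph); such exceptional permutations number at most $2\,\tau!(n-\tau)!$, a fraction $\le 2/\binom{n}{\tau}\le\alpha$ of all $n!$ permutations, which is precisely where $\binom{n}{\tau}\ge 2/\alpha$ enters. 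Consequently the level-$\alpha$ permutation cut-off cannot keep pace with $\widehat{S}^{h,\psi}_{n}\asymp a^2+b^2$, so $H_0$ is rejected and $\widehat{T}^{h,\psi}_{n}=\tau$ is reported as the detected change-point with probability tending to one.
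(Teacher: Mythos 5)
Your proposal follows essentially the same route as the paper's proof: you decompose the block statistics into their deterministic means plus $O_P(v_{\psi,d}/d)$ fluctuations under (A4), observe that the hypothesis ${\cal E}_{\psi}(F_1,F_2)/v_{\psi,d}\to\infty$ makes the noise (and the cross term) negligible relative to $\omega_1^2+\omega_2^2$ (your $a^2+b^2$), reduce part (a) to the weighted deterministic profile being uniquely maximized at $t=\tau$, and handle part (b) by the permutation-counting argument that yields the condition $\binom{n}{\tau}\ge 2/\alpha$. The profile-maximization algebra you defer is exactly the step the paper also imports from the Theorem~2/Theorem~3 computations rather than re-deriving, and your slightly more explicit control of the cross term via $a^2+b^2\ge\tfrac12(a+b)^2$ is consistent with the paper's $o_P(\mathfrak{s}_d^2)$ statement, so the argument is sound and matches the paper's.
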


To demonstrate the performance of the proposed methods under sparse signals, we consider a location and a scale problem, where $\F_1$ and $\F_2$ differ only in $d^{\beta}$ ($0<\beta<1$) many coordinates.

\begin{example}\label{exa:5} $\Zvec_1,\Zvec_2,\ldots,\Zvec_{25}
\stackrel{iid}{\sim}
\mathcal{N}\left(\boldsymbol{0}_{d},\boldsymbol{I}_{d}\right)$
		and $\Zvec_{26
        },\Zvec_{27},\ldots,\Zvec_{50}\stackrel{iid}{\sim}\mathcal{N}\left(\muvec,\boldsymbol{I}_{d}\right)$, where the first $\floor{d^{\beta}}$ many components of $\muvec$ are $1$ and rest are $0$.
	\end{example}

\begin{example}\label{exa:6}
Let $\Zvec_1,\Zvec_2,\ldots,\Zvec_{25}
\stackrel{iid}{\sim}\mathcal{N}\!\left(\boldsymbol{0}_{d},\,\boldsymbol{I}_{d}\right)$
        and 
        $\Zvec_{26},\Zvec_{27},\ldots,\Zvec_{50}
\stackrel{iid}{\sim}\mathcal{N}\!\left(\boldsymbol{0}_{d},\,\boldsymbol{\Sigma}\right)$,
        where $\boldsymbol{\Sigma}
        $ is a diagonal matrix with the first $\lfloor d^{\beta}\rfloor$ elements $3$ and the rest are $1$.
        \end{example}
            
		%\begin{figure}[h]
		%\centering
		%\setlength{\tabcolsep}{-2pt}
		%\begin{tabular}{cc}
			% First Row Titles
	%		\textbf{(a) $T=100$} & \textbf{(b) $T=200$} \\
			% First Row Images
			%\includegraphics[width=0.4\textwidth]{Plots/HDHSS/HDHSS_n100.pdf} &
			%\includegraphics[width=0.4\textwidth]{Plots/HDHSS/HDHSS_n200.pdf} \\
			
			% Second Row Titles
		%	\textbf{(c) $T=300$} & \textbf{(d) $T=400$} \\
			% Second Row Images
			%\includegraphics[width=0.4\textwidth]{Plots/HDHSS/HDHSS_n300.pdf} &
			%\includegraphics[width=0.4\textwidth]{Plots/HDHSS/HDHSS_n400.pdf} \\
	%	\end{tabular}
		%\caption{Histograms of estimated change-point fraction $\hat{\gamma}$ for different values of $T$ and dimension $d=200$ for Example \ref{exa:3}.}
		%\label{fig:HDHSS_Plots}
	%\end{figure}

For each example, we conducted our experiment with different choices of $d$ ($d=2^i$ for $i=1,2,\ldots,10$). In each case, we repeated the experiment 1000 times and computed the success rates (i.e, the proportions of times the true change-point is detected) for Proposed-$\ell_2$, Proposed-$\ell_1$ and Proposed-exp. These success rates are reported in Figure \ref{fig:combined_success_loc_scale_sparse}.

        \begin{figure}[t]
              \centering
             % %0.2em}
            \includegraphics[width=0.95\textwidth]{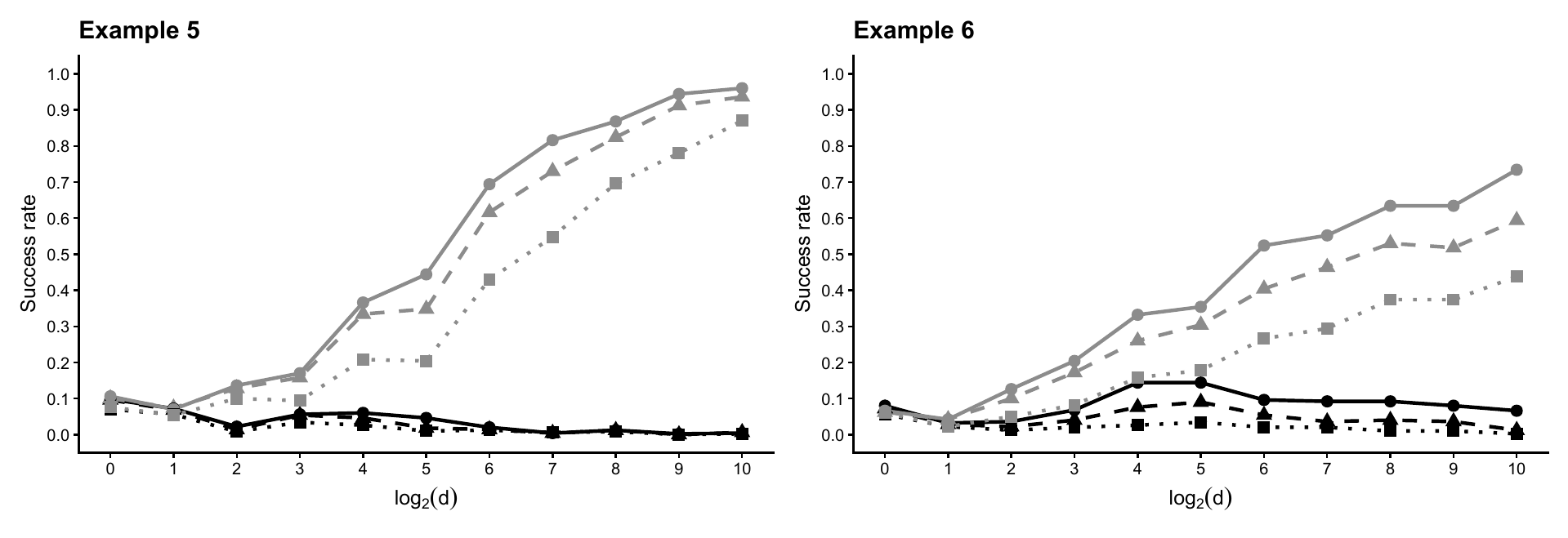}
              %0.2in}
              \caption{Success rates of Proposed-$\ell_2$ ($\bullet$), Proposed-$\ell_1$ ($\blacktriangle$) and Proposed-exp ($\blacksquare$) in Example \ref{exa:5} (location change) 
                and Example \ref{exa:6} (scale change). Black curves correspond to $\beta = 0.4$ 
                and grey curves to $\beta = 0.6$.}
              \label{fig:combined_success_loc_scale_sparse}
            \end{figure}
In these examples, we have $\nu^2=0$, $\sigma_1^2=\sigma_2^2$
and $\liminf\limits_{d \rightarrow \infty}{{\cal E}}^{(d)}_{h,\psi}(F_1,F_2)=0$
for $\psi(t)=t$ and $\psi(t)=1-e^{-t}$ with $h(t)=t$. So, the conditions of Theorems 2 and 3 do not hold. But for $\beta=0.6$, we have $\|\muvec\|\gg v_d\asymp d^{1/2}$ (in Example \ref{exa:5}) and $|\text{trace}(\sigmat_0)-\text{trace}({\bf I}_d)|\ge v_d$ (in Example \ref{exa:6}). As a result, the success rate of Proposed-$\ell_2$ increased with 
$d$. Similarly,
for Proposed-$\ell_1$ and Proposed-exp, we have $v_{\psi,d} \asymp d^{1/2}$ and ${\cal E}_{\psi}(F_1,F_2)\asymp d^{\beta}$. So, for $\beta=0.6$, they also performed well in higher dimensions. But, for $\beta=0.4$, where the asymptotic order of the signal was lower than $d^{1/2}$, all three methods performed poorly.
%This explains the reason behind the poor high-dimensional performance of the proposed methods in those cases.

\subsection{What happens if the sample size increases with dimension?}

If $n$ also increases with $d$, it is possible to have good performance by the proposed methods in Examples \ref{exa:5} and \ref{exa:6} even for $\beta=0.4$.  Figure \ref{fig:combined_success_loc_scale_HDHSS} shows the performance of the proposed methods
for $\beta=0.4$ and $d=2^i$ with $i=5,6,\ldots,9$ when ($i$) $n$ is fixed at $50$ and ($ii$) $n=50+d$ increases with $d$. In each case,
we repeated the experiment $500$ times and counted the proportion of times a change-point was detected and 
$|{\widehat \theta}_n - \theta|=|{\widehat T}_n-\tau|/n \le 0.01$ (or $|{\widehat \theta}^{h,\psi}_n - \theta|=|{\widehat T}^{h,\psi}_n-\tau|/n \le 0.01$). When $n$ was kept fixed, these proportions, which we call the modified success rates, dropped down slowly as $d$ increased. But when $n$ increased with $d$, they increased steadily. This intrigued us to investigate the behavior of the proposed methods when the sample size increases with the dimension.

\begin{figure}[h]
              \centering
              %%0.2em}
              \includegraphics[width=\textwidth]{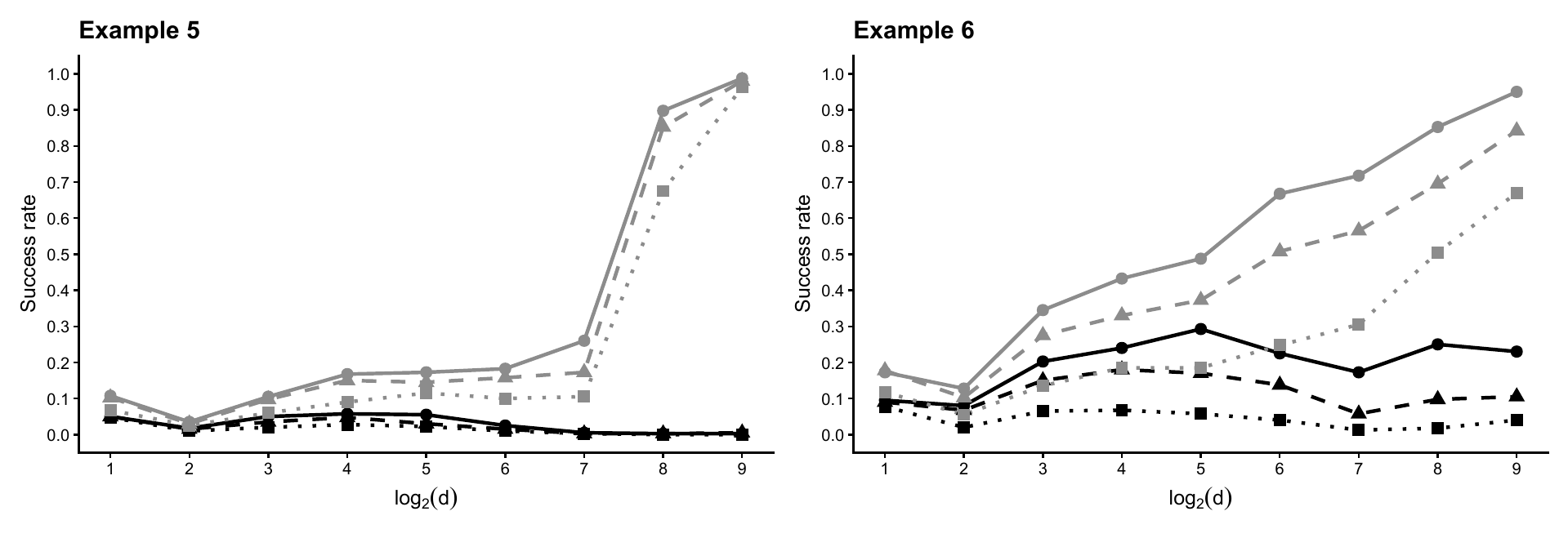}
              %0.7in}
              \caption{Modified success rates of Proposed-$\ell_2$ ($\bullet$), Proposed-$\ell_1$ ($\blacktriangle$) and Proposed-exp ($\blacksquare$) in Examples~5 and~6 with $\beta = 0.4$. Black curves correspond to the HDLSS setting ($n = 50$, fixed) and grey curves to the HDHSS setting ($n = d + 50$).}             \label{fig:combined_success_loc_scale_HDHSS}
\end{figure}
First we investigate the behavior of Proposed-$\ell_2$. Define $\mu_{11}\left(t\right)=\E T_{11}\left(t\right)$, $\mu_{12}\left(t\right)=\E T_{12}\left(t\right)$ and  $\mu_{22}\left(t\right)=\E T_{22}\left(t\right)$ ($T_{11}(t)$, $T_{12}(t)$ and $T_{22}(t)$ are as in Section 2). Also define $F(t)=\frac{t\left(n-t\right)}{n^{2}}\left[\left\{ \mu_{12}\left(t\right)-\mu_{11}\left(t\right)\right\} ^{2}+\left\{ \mu_{12}\left(t\right)-\mu_{22}\left(t\right)\right\} ^{2}\right].$

For { properly detecting} the change-point, we need the value of $F(t)$ to be higher in the neighborhood of $\tau$ compared to its values in other regions. For any $\delta>0$, one can measure this local difference in signal by $\Delta_{n,d}\left(\delta\right):= F\left(\tau\right)-\max_{t \in {\mathcal A}_n:\left|t-\tau\right|\ge\delta n}F\left(t\right)$. Higher values of $\Delta_{n,d}(\delta)$ facilitate proper detection of the change-point. In addition to this, we also define
$M_{\mu}=\left|\mu_{12}(\tau)-\mu_{11}(\tau)\right|+\left|\mu_{12}(\tau)-\mu_{22}(\tau)\right|,$ 
which can be viewed as a measure of separation between the distributions before and after the change-point. If the local signal difference $\Delta_{n,d}(\delta)$ is not negligible compared to $M_\mu$, we have high-dimensional consistency of Proposed-$\ell_2$ when the sample size also increases with the dimension. This is asserted by the following theorem.

\begin{thm}
\label{thm:HDHSS 1}
Suppose $\Zvec_{1},\dots,\Zvec_{\tau}\overset{iid}{\sim}\F_{1}$
and $\Zvec_{\tau+1},\dots,\Zvec_{n}\overset{iid}{\sim}\F_{2}$, where $\tau=\lfloor n \theta \rfloor$
for some $\theta\in\left(0,1\right)$. Let $\F_1$ and $\F_2$ be sub-exponential distributions, and suppose $n$ increases with $d$ in such a way that $n\delta_n/\log n\to\infty$ and, for any $\delta>0$, $\min\!\left\{\frac{\Delta_{n,d}(\delta)}{M_\mu},\,\sqrt{\Delta_{n,d}(\delta)}\right\}\gg \sqrt{\frac{d \log n }{n\delta_n}}.
$ Then,
\begin{enumerate}
    %0.075in}
    \item[$(a)$] ${\widehat{ \theta}}_n={\widehat{T}}_n/n$ converges to $\theta$ almost surely.
    %0.1in}
    \item[$(b)$] For any given level $\alpha ~(0<\alpha<1)$,
    the change-point $\tau$ is detected (by $\widehat T_n$) with probability tending to one.
\end{enumerate}
\end{thm}

{
In Example~\ref{exa:5}, the distributions before and after the change-point differ in $s=d^\beta$ coordinates. Since $\frac{1}{\sqrt d}\{\mu_{12}(\tau)-\mu_{11}(\tau)\}
=\frac{1}{\sqrt d}\{\mu_{12}(\tau)-\mu_{22}(\tau)\}
=\sqrt{2+s/d}-\sqrt2=\frac{1}{2\sqrt2}\frac{s}{d}+o(s/d)$, we have
$M_\mu:=|\eta_{12}-\eta_{11}|+|\eta_{12}-\eta_{22}|\asymp s/\sqrt d=d^{\beta-\frac12}$ and
$\Delta_{n,d}(\delta)\asymp d(s/d)^2=d^{2\beta-1}$ (hence
$\min\{\Delta_{n,d}(\delta)/M_\mu,\linebreak\sqrt{\Delta_{n,d}(\delta)}\}\asymp d^{\beta-\frac12}$). Since the
coordinates are independent Gaussian, the condition of Theorem~\ref{thm:HDHSS 1} can be relaxed, and in this case, we only need (see the Remark after proof of Theorem \ref{thm:HDHSS 1} in Appendix-B)
\[
\min\Big\{\frac{\Delta_{n,d}(\delta)}{M_\mu},\,\sqrt{\Delta_{n,d}(\delta)}\Big\}\gg \sqrt{\frac{ \log n }{n\delta_n}}\iff d^{2\beta-1}\gg \frac{\log n}{n\delta_n}.
\]
If $\delta_n\asymp n^{-1/2}$, we need $\sqrt n\,d^{2\beta-1}\gg \log n$. Note that there is a phase transition at $\beta=\tfrac12$. For $\beta>\tfrac12$ we have
consistency in the HDLSS asymptotic regime, but for $\beta<\tfrac12$ one must let $n$ grow with $d$. Note that for $\beta=0.4$, one needs $\sqrt n\,d^{-0.2}\gg \log n$, or equivalently,
$n\gg d^{0.4}(\log n)^2$. Similar explanations can be given for Example~6 as well.
}
However, if the distance function is bounded (e.g. $h(t)=t$ and $\psi(t)=1-e^{-t}$), the high-dimensional consistency of the proposed
method {follows without the sub-exponential or sub-gaussian assumption.} This result is stated below. %even when the underlying distributions are not sub-exponential. 

%Here, we consider $d=200$ with true change point at $\tau_{0}=T/2$, $h(t) = t$, $\psi(t) = 1- \exp(-t/2)$ and report the histogram of the estimated change-point for different sample sizes in Figure \ref{fig:HDHSS_Plots}. Each experiment is repeated 100 times. Note that $d^{-1}\norm{\muvec_2-\muvec_{{1}}}^{2}=d^{-1}\floor{d^{0.5}}$ converges to $0$ as $d$ grows to infinity. Also, $\sigma_{{1}}=\sigma_{_{2}}$ implying
%	$\nu^{2}+\left(\sigma_{_{1}}-\sigma_{_{2}}\right)^{2}=0$. Therefore, Assumption (A2) does not hold for this example. We can also show that $\underset{d\to\infty}{\lim}\mathcal{E}_{h,\psi}^{(d)}\left(F,G\right)=0$. So, it is expected that the proposed methods will have a poor performance if the dimension of the data is very large compared to the sample size. This is validated in Figure \ref{fig:HDHSS_Plots} (a). But if we increased the sample size we observed that the performance of the estimator gradually improved form Figure \ref{fig:HDHSS_Plots} (b)-(d). Therefore, one may be curious to know what happens when the sample size increases with the dimension of the data. The following theorem helps us in this regard.

%	In Figure \ref{fig:HDHSS_Plots} (a) we observed that the estimator has poor performance when the sample size $T$ is small. But as we increased the sample size, we noted a gradual improvement in the accuracy of the estimator in Figure \ref{fig:HDHSS_Plots} (b)-(d). Note that for this example both sample size and dimension is large. The behaviour of our estimator for such examples can be explained by the following Theorem.
	
%	\label{sec:HDHSS}

\begin{thm}
\label{HDHSS 2}
Suppose $\Zvec_{1},\dots,\Zvec_{\tau}\overset{iid}{\sim}\F_{1}$
and $\Zvec_{\tau+1},\dots,\Zvec_{n}\overset{iid}{\sim}\F_{2}$, where
$\tau=\lfloor n\theta\rfloor$ for some $\theta\in(0,1)$. If the distance
function $\varphi_{h,\psi}$ is bounded, $n\delta_n/\log n\to\infty$,
and for any $\delta>0$,
$\Delta_{n,d}^{h,\psi}(\delta)\gg\sqrt{\log n/(n\delta_n)}$,
then as $n$ and $d$ grow to infinity,\\
(a) $\hat{\theta}_{n}^{h,\psi}=\hat{T}_{n}^{h,\psi}/n$
converges almost surely to $\theta$.\\
(b) For any given level $\alpha$ $(0<\alpha<1)$, the change-point $\tau$
is detected (by $\hat{T}_{n}^{h,\psi}$) with probability tending to $1$.
\end{thm}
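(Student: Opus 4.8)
The plan is to run the same argument as for Theorem~6, with the Bernstein-type concentration bounds there (which needed sub-exponentiality of $\norm{\Zvec_i-\Zvec_j}$) replaced by Hoeffding/McDiarmid bounds, which are available here precisely because $\varphi_{h,\psi}$ is bounded, say $0\le\varphi_{h,\psi}\le B<\infty$. Write $\mu^{h,\psi}_{ab}(t)=\E\,T^{h,\psi}_{ab}(t)$ for $ab\in\{11,12,22\}$, put $F^{h,\psi}(t)=\frac{t(n-t)}{n^2}\big[\{\mu^{h,\psi}_{12}(t)-\mu^{h,\psi}_{11}(t)\}^2+\{\mu^{h,\psi}_{12}(t)-\mu^{h,\psi}_{22}(t)\}^2\big]$, and let $\Delta^{h,\psi}_{n,d}(\delta)=F^{h,\psi}(\tau)-\max_{t\in\mathcal{A}_n,\,|t-\tau|\ge\delta n}F^{h,\psi}(t)$, the exact analogues of the quantities used for BG-$\ell_2$. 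The proof reduces to two ingredients: a uniform concentration bound showing the empirical criterion is close to $F^{h,\psi}$, and a bound showing the permutation cut-off is negligible compared with $\Delta^{h,\psi}_{n,d}(\delta)$.

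For the first ingredient I would show $\rho_n:=\sup_{t\in\mathcal{A}_n}\big|\tfrac{t(n-t)}{n^2}\mathcal{D}^{h,\psi}_n(\mathcal{X}_t,\mathcal{Y}_t)-F^{h,\psi}(t)\big|=O_P\!\big(\sqrt{\log n/n}\big)$. Each of $T^{h,\psi}_{11}(t)$, $T^{h,\psi}_{22}(t)$, $T^{h,\psi}_{12}(t)$ is a function of the independent $\Zvec_1,\dots,\Zvec_n$ whose value changes by $O(B/m)$ when a single observation is replaced, where $m$ is the size of the relevant block (at least $\lfloor n\delta_n\rfloor$ for $t\in\mathcal{A}_n$); McDiarmid's inequality together with a union bound over the at most $3n$ pairs $(t,ab)$ gives $\sup_{t,ab}|T^{h,\psi}_{ab}(t)-\mu^{h,\psi}_{ab}(t)|=O_P\big(\sqrt{n\log n/(t(n-t))}\big)$. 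Using $x^2-y^2=(x-y)(x+y)$ with all the $T$'s and $\mu$'s lying in $[0,B]$, the deviation of $\tfrac{t(n-t)}{n^2}\mathcal{D}^{h,\psi}_n$ from $F^{h,\psi}(t)$ is at most a constant times $\tfrac{t(n-t)}{n^2}\sqrt{n\log n/(t(n-t))}=\sqrt{\log n}\,\sqrt{t(n-t)}/n^{3/2}\le\tfrac{1}{2}\sqrt{\log n/n}$, so the weight $\tfrac{t(n-t)}{n^2}$ exactly absorbs the blow-up near the ends of $\mathcal{A}_n$ and $\rho_n=O_P(\sqrt{\log n/n})$. Since $\Delta^{h,\psi}_{n,d}(\delta)\gg\sqrt{\log n/n}$ by hypothesis, $\rho_n=o_P\big(\Delta^{h,\psi}_{n,d}(\delta)\big)$ for every fixed $\delta>0$.

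Part (a) follows by the standard argmax argument: on the event $\{\rho_n<\Delta^{h,\psi}_{n,d}(\delta)/2\}$, which has probability tending to one, the criterion at $t=\tau$ exceeds $F^{h,\psi}(\tau)-\Delta^{h,\psi}_{n,d}(\delta)/2$, whereas at every $t\in\mathcal{A}_n$ with $|t-\tau|\ge\delta n$ it is below $F^{h,\psi}(t)+\Delta^{h,\psi}_{n,d}(\delta)/2\le F^{h,\psi}(\tau)-\Delta^{h,\psi}_{n,d}(\delta)/2$; hence $|\widehat T^{h,\psi}_n-\tau|<\delta n$, and since $\delta>0$ is arbitrary and $\tau/n\to\theta$ we obtain $\widehat\theta^{h,\psi}_n\stackrel{P}{\to}\theta$. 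The same event forces $\widehat S^{h,\psi}_n\ge\tfrac{\tau(n-\tau)}{n^2}\mathcal{D}^{h,\psi}_n(\mathcal{X}_\tau,\mathcal{Y}_\tau)\ge F^{h,\psi}(\tau)-\rho_n\ge(1-o_P(1))\Delta^{h,\psi}_{n,d}(\delta)$, so to prove (b) it suffices to show that the level-$\alpha$ permutation cut-off $\widehat c_{n,\alpha}$ is of strictly smaller order.

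Conditionally on $\Zvec_1,\dots,\Zvec_n$, under a uniformly random relabelling $\pi$ each of $T^{h,\psi}_{11}$, $T^{h,\psi}_{12}$, $T^{h,\psi}_{22}$ has the same conditional mean, namely the grand mean $\bar\varphi_n=\binom{n}{2}^{-1}\sum_{i<j}\varphi_{h,\psi}(\Zvec_i,\Zvec_j)$, so the population version of $\mathcal{D}^{h,\psi}_n$ under $\pi$ vanishes identically. Applying a bounded-difference/Hoeffding bound valid for sampling without replacement to each $T^{h,\psi}_{ab}(t)-\bar\varphi_n$, with the same block-size scaling as in the independent case and a union bound over $t\in\mathcal{A}_n$, the permuted criterion $\max_{t\in\mathcal{A}_n}\tfrac{t(n-t)}{n^2}\mathcal{D}^{h,\psi,(\pi)}_n$ is $O(\log n/n)$ with conditional probability $1-o(1)$, uniformly in the data; hence $\widehat c_{n,\alpha}=O(\log n/n)=o(\sqrt{\log n/n})=o_P\big(\Delta^{h,\psi}_{n,d}(\delta)\big)$, and therefore $\P(\widehat S^{h,\psi}_n>\widehat c_{n,\alpha})\to1$, which gives (b). The delicate point is precisely this permutation step: one has to supply a concentration inequality valid for the dependent, sampling-without-replacement structure induced by permuting the labels (rather than for the independent structure used for $\rho_n$), uniform over $t$ and over the data, and with a bound of order $\log n/n$ — strictly below the $\sqrt{\log n/n}$ separation supplied by the hypothesis. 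Boundedness of $\varphi_{h,\psi}$ is exactly what makes all three of these requirements go through with no tail condition on $F_1$ or $F_2$, which is the whole reason Theorem~7 can dispense with the sub-exponentiality assumed in Theorem~6; and since only convergence in probability (not almost-sure convergence) is claimed in~(a), McDiarmid-type bounds suffice and no Borel--Cantelli argument is needed.
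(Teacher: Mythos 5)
Your proposal is correct and follows essentially the same route as the paper's proof: boundedness of $\varphi_{h,\psi}$ is used to run McDiarmid/bounded-difference concentration, made uniform over $t\in\mathcal{A}_n$ after weighting by $t(n-t)/n^2$ and compared against the gap $\Delta^{h,\psi}_{n,d}(\delta)\gg\sqrt{\log n/n}$ for the argmax step, and the same bounded-difference argument is applied conditionally to the permuted statistics to show the permutation cut-off is asymptotically negligible. The only differences are cosmetic --- you concentrate each $T^{h,\psi}_{ab}(t)$ separately and combine via $x^2-y^2=(x-y)(x+y)$ rather than applying the inequality directly to $\mathcal{D}^{h,\psi}_n(t)$, and in part (b) you compare the cut-off (of order $\log n/n$) to $\Delta^{h,\psi}_{n,d}(\delta)$ rather than to a fixed positive limit of the unpermuted maximum, which if anything treats a slowly vanishing signal a bit more carefully than the paper does.
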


\section{Results from the analysis of simulated data sets} \label{sec:Simulation}

Recall that in the location problem (Example 1),
while MJ had the best performance, followed by KCPA, 
our proposed methods performed well. %, and their performance was better than NNG and MST. 
However, in the scale problem 
(Example 2), these proposed methods outperformed their all competitors. We observed the same even when we carried out experiments with the same location and scale shifts, but with an auto-correlation structure among the coordinate variables. Even for the sparse versions of location and scale problems (Example 5 and 6), we observed the same. Therefore, to save space, those results are not reported here. Instead, here we analyse four other data sets, each involving 200-dimensional observations, to compare the empirical performance of our methods with MJ, KCPA, NNG and MST. For each example, we consider three choices of $\tau$ (10, 25 and 40), with $n=50$ in all cases. Each experiment is repeated 500 times, and we only count the cases where the change-points are finally detected. Table \ref{table:2} shows the frequency distribution of the difference between the true change-point and the estimated change-point for different methods. 
%We first demonstrate the performance of these methods for single change point detection problem in Section \ref{subsec:single}. Later we consider the multiple change point problem in Section \ref{subsec:multiple} and present a real data analysis in Section \ref{subsec:real}. 
%We begin with a location (Example 7) and a scale (Example 8) problem with an autocorrelation structure among the coordinate variables.
%\begin{example}
%\label{exa:7}
%$\Zvec_{1},\ldots,\Zvec_{\tau} \sim \mathcal{N}_{d}(\mathbf{0}, \Sigma(\rho))$
%and
%$\Zvec_{\tau+1},\ldots,\Zvec_{T} \sim \mathcal{N}_{d}(0.5 \cdot \mathbf{1}_{d}, \Sigma(\rho))$,
%where $\Sigma(\rho)=((\rho^{|i-j|}))_{d\times d}$ with $\rho=0.8$.
%\end{example}
%\begin{example}
%\label{exa:8}
%$\Zvec_{1},\ldots,\Zvec_{\tau} \sim \mathcal{N}_{d}(\mathbf{0}, \Sigma(\rho))$
%and
%$\Zvec_{\tau+1},\ldots,\Zvec_{T} \sim \mathcal{N}_{d}(\mathbf{0}, 2\Sigma(\rho))$,
%with $\rho=0.9$.
%\end{example}
%In the location problem, our proposed methods performed much better than the graph-based methods (NNG and MST). But, JM and KCPA had the best
%performance in this example. However, in the scale problem, our proposed methods outperformed their all competitors by a large margin. %Interestingly, the NNG and MST methods had an upward bias in estimating the change point for this alternative. Indicating that they are highly effected by the high-dimensionality of the data.
Brief descriptions of the four examples are given below.

%We begin with an example involving two uniform distributions.

\begin{example}
\label{exa:7}
$\Zvec_{1},\ldots,\Zvec_{\tau} \sim \mathrm{Unif}(\mathcal{C})$
and
$\Zvec_{\tau+1},\ldots,\Zvec_{n} \sim \mathrm{Unif}(\mathcal{S})$,
where $\mathcal{C}=[-1,1]^{200}$ and
$\mathcal{S}=\{x\in\R^{200}:\|x\|\le r\}$ with $r$ satisfying
$\Vol(\mathcal{C})=\Vol(\mathcal{S})$.
\end{example}
\begin{example}
\label{exa:8}
$\Zvec_{1},\ldots,\Zvec_{\tau} \sim \mathcal{N}(\mathbf{0}_{200},\mathbf{I}_{200})$
and
$\Zvec_{\tau+1},\ldots,\Zvec_{n} \sim \mathrm{MGSN}(d=200,p=0.2,\mathbf{0}_{200}, \mathbf{I}_{200})$, where $\mathrm{MGSN}$ refers to the Multivariate Geometric Skewed Normal Distribution proposed by \cite{Kundu2017}.
\end{example}
\begin{example}
\label{exa:9}
$\Zvec_{1},\ldots,\Zvec_{\tau} \sim \mathcal{N}(\mathbf{0}_{200},\Sigma_{1})$
and
$\Zvec_{\tau+1},\ldots,\Zvec_{n} \sim \mathcal{N}(\mathbf{0}_{200},\Sigma_{2})$, where
$\Sigma_{1}$ and 
$\Sigma_{2}$ are diagonal matrices. The first $d/2$ entries of $\Sigma_{1}$ (respectively, $\Sigma_{2}$) are  $1$
(respectively, 3) and the rest are 3 (respectively, 1).
\end{example}

\begin{example}
\label{exa:10}
$\Zvec_{1},\ldots,\Zvec_{\tau} \sim \mathcal{N}(\mathbf{0}_{200}, 3I_{200})$
and
$\Zvec_{\tau+1},\ldots,\Zvec_{n} \sim \mathcal{T}_{200}(4)$,
where $T_d(k)$ is
the $d$-dimensional distribution with i.i.d. co-ordinate variables, each following the
standard $t$ distribution with $k$ degrees of freedom.
\end{example}

In Example 7, our proposed methods outperformed all their competitors. For $\tau=10$, NNG had a somewhat competitive performance, but for other choices of $\tau$, it performed poorly. %Among other competitors, only KCPA had somewhat satisfactory performance for $\tau =10$ and $25$, but not for $\tau=40$.
%Example 10 involves observations from %a change-point problem between 
%a multivariate normal and a multivariate geometric skewed normal (MGSN) distribution \citep{Kundu2017}.
In Example 8, MJ,  Proposed-$\ell_1$ and Proposed-$\ell_2$ had almost similar performance, with the former one having an edge. But Proposed-exp outperformed all of them. %It had the best performance for all choices of $\tau$.
KCPA had a competitive performance for $\tau=25$
and $40$, but for $\tau=10$, its performance was much inferior. %KNN and MST had poor performance.

\begin{table}[t]
\setlength{\tabcolsep}{2.5pt}
\renewcommand{\arraystretch}{0.6}
\centering
{\scriptsize
\begin{tabular}{|c|l||c|c|c|c|c|c||c|c|c|c|c|c||c|c|c|c|c|c|}
\hline
\multicolumn{2}{|c|}{} & \multicolumn{6}{c||}{$\tau=10$} & \multicolumn{6}{c||}{$\tau=25$} & \multicolumn{6}{c|}{$\tau=40$} \\ \hline
\multicolumn{2}{|c|}{$|\hat\tau-\tau|$} &  0 & 1 & 2 & 3 & $\geq$4 & Total &  0 & 1 & 2 & 3 & $\geq$4 & Total &  0 & 1 & 2 & 3 & $\geq$4 & Total \\ \hline
\parbox[t]{2mm}{\multirow{7}{*}{\rotatebox[origin=c]{90}{Example 7}}} &  Proposed-$\ell_2$ &  485 & 15 & 0 & 0 & 0 & 500 &  493 & 6 & 1 & 0 & 0 & 500 &  494 & 6 & 0 & 0 & 0 & 500 \\
 &  Proposed-$\ell_1$ &  498 & 2 & 0 & 0 & 0 & 500 &  497 & 2 & 1 & 0 & 0 & 500 &  500 & 0 & 0 & 0 & 0 & 500 \\
 &  Proposed-$\exp$ &  495 & 5 & 0 & 0 & 0 & 500 &  497 & 2 & 1 & 0 & 0 & 500 &  496 & 4 & 0 & 0 & 0 & 500 \\
 &  MJ &  11 & 21 & 14 & 12 & 57 & 115 &  23 & 25 & 20 & 16 & 87 & 171 &  3 & 9 & 9 & 3 & 59 & 83 \\
 &  KCPA &  189 & 88 & 59 & 41 & 123 & 500 &  119 & 83 & 50 & 27 & 221 & 500 &  4 & 8 & 13 & 7 & 468 & 500 \\
 &  NNG &  369 & 70 & 23 & 5 & 20 & 487 &  0 & 0 & 0 & 0 & 379 & 379 &  0 & 0 & 0 & 0 & 92 & 92 \\
 &  MST &  0 & 5 & 4 & 4 & 22 & 35 &  0 & 0 & 0 & 0 & 2 & 2 &  0 & 0 & 0 & 0 & 3 & 3 \\
\hline
\parbox[t]{2mm}{\multirow{7}{*}{\rotatebox[origin=c]{90}{Example 8}}} &  Proposed-$\ell_2$ &  262 & 85 & 47 & 31 & 68 & 493 &  267 & 103 & 45 & 33 & 52 & 500 &  293 & 108 & 45 & 23 & 31 & 500 \\
 &  Proposed-$\ell_1$ &  264 & 84 & 46 & 32 & 67 & 493 &  266 & 101 & 45 & 35 & 53 & 500 &  286 & 111 & 46 & 24 & 33 & 500 \\
 &  Proposed-$\exp$ &  357 & 78 & 26 & 21 & 18 & 500 &  365 & 67 & 35 & 17 & 16 & 500 &  381 & 73 & 30 & 7 & 9 & 500 \\
 &  MJ &  296 & 85 & 44 & 29 & 45 & 499 &  289 & 92 & 45 & 29 & 45 & 500 &  325 & 90 & 39 & 17 & 27 & 498 \\
 &  KCPA &  75 & 29 & 18 & 14 & 49 & 185 &  247 & 83 & 40 & 30 & 62 & 462 &  309 & 62 & 29 & 11 & 29 & 440 \\
 &  NNG &  0 & 0 & 0 & 0 & 44 & 44 &  0 & 1 & 1 & 4 & 80 & 86 &  7 & 7 & 5 & 7 & 54 & 80 \\
 &  MST &  0 & 0 & 0 & 0 & 19 & 19 &  0 & 0 & 0 & 0 & 16 & 16 &  1 & 2 & 0 & 0 & 29 & 32 \\
\hline
\parbox[t]{2mm}{\multirow{7}{*}{\rotatebox[origin=c]{90}{Example 9}}} &  Proposed-$\ell_2$ &  2 & 0 & 3 & 5 & 33 & 43 &  1 & 1 & 0 & 0 & 42 & 44 &  2 & 2 & 4 & 3 & 36 & 47 \\
 &  Proposed-$\ell_1$ &  381 & 52 & 11 & 9 & 28 & 481 &  485 & 10 & 2 & 0 & 3 & 500 &  385 & 31 & 13 & 7 & 33 & 469 \\
 &  Proposed-$\exp$ &  484 & 14 & 2 & 0 & 0 & 500 &  498 & 2 & 0 & 0 & 0 & 500 &  487 & 10 & 0 & 2 & 1 & 500 \\
 &  MJ &  3 & 5 & 1 & 3 & 50 & 62 &  1 & 1 & 1 & 1 & 74 & 78 &  0 & 5 & 8 & 4 & 47 & 64 \\
 &  KCPA &  11 & 19 & 15 & 23 & 430 & 498 &  6 & 6 & 11 & 14 & 462 & 499 &  11 & 19 & 25 & 12 & 433 & 500 \\
 &  NNG &  8 & 8 & 2 & 4 & 49 & 71 &  154 & 96 & 64 & 33 & 42 & 389 &  5 & 8 & 3 & 4 & 59 & 79 \\
 &  MST &  6 & 6 & 6 & 3 & 56 & 77 &  93 & 69 & 45 & 23 & 64 & 294 &  5 & 3 & 2 & 2 & 55 & 67 \\
\hline
\parbox[t]{2mm}{\multirow{7}{*}{\rotatebox[origin=c]{90}{Example 10}}} &  Proposed-$\ell_2$ &  3 & 7 & 2 & 3 & 45 & 60 &  16 & 12 & 11 & 5 & 76 & 120 &  56 & 35 & 24 & 19 & 55 & 189 \\
 &  Proposed-$\ell_1$ &  305 & 109 & 29 & 17 & 34 & 494 &  337 & 98 & 33 & 11 & 20 & 499 &  359 & 85 & 27 & 9 & 19 & 499 \\
 &  Proposed-$\exp$ &  475 & 22 & 2 & 0 & 1 & 500 &  486 & 13 & 1 & 0 & 0 & 500 &  478 & 21 & 1 & 0 & 0 & 500 \\
 &  MJ &  0 & 2 & 0 & 3 & 38 & 43 &  0 & 1 & 1 & 0 & 52 & 54 &  0 & 2 & 3 & 4 & 34 & 43 \\
 &  KCPA &  1 & 0 & 2 & 0 & 12 & 15 &  0 & 0 & 0 & 0 & 11 & 11 &  0 & 0 & 1 & 0 & 9 & 10 \\
 &  NNG &  0 & 0 & 0 & 0 & 32 & 32 &  0 & 0 & 2 & 10 & 24 & 36 &  0 & 0 & 0 & 0 & 39 & 39 \\
 &  MST &  0 & 0 & 0 & 0 & 26 & 26 &  0 & 0 & 0 & 0 & 2 & 2 &  0 & 0 & 0 & 0 & 15 & 15 \\
\hline
\end{tabular}}
\caption{Frequency distribution of $|\hat\tau-\tau|$ in Examples 7-10.%\ref{exa:5}-\ref{exa:12}
}
\label{table:2}
\end{table}

In Examples 9 and 10, two distributions have the same location (i.e., $\mu^2=0$) and the same average variance (i.e., $\sigma_1^2=\sigma_2^2$). So, all methods based on Euclidean distances
(MJ, KCPA, MST, KNN and Proposed-$\ell_2$) 
performed poorly. %had poor performance. 
But Proposed-$\ell_1$ and Proposed-exp, particularly the latter one, had excellent performance. The reason %behind the excellent performance of these methods have 
has been discussed in Section 3.1.

\section{Multiple Change-Point Detection}
\label{subsec:multiple}

Following the idea of \cite{Matteson2014}, our methods can be easily generalised for multiple change-point detection. First, we look for the most potential change-point in the sequence $\Zvec_1,\ldots,\Zvec_n$. For this purpose, for any fixed $t$ and $s$ ( $t <s$), we compute ${\cal D}_n({\cal Z}_{1:t},{\cal Z}_{t+1:s})$ and ${\cal D}_0(t,s)=\frac{t(s-t)}{s^2}{\cal D}_n({\cal Z}_{1:t},{\cal Z}_{t+1:s})$, where ${\cal Z}_{1:t}=\{\Zvec_1,\ldots,\Zvec_t\}$ and ${\cal Z}_{t+1:s}=\{\Zvec_{t+1},\ldots,\Zvec_s\}$. This is done for all possible choices and $t$ and $s$, and the value of $t$ which leads to the highest value of 
${\cal D}_0(t,s)$ for some $s>t$, is considered as the most potential change-point. Next, we test for its statistical significance. A conditional test based on the permutation principle is used for this purpose. If this change-point turns out to be statistically insignificant, the algorithm
stops by suggesting no existence of change-points in the data. If it is statistically significant, the location corresponding to that change (say, $t$) is considered a detected change-point. In that case, we use the algorithm again on the two sub-sequences $\Zvec_1,\ldots,\Zvec_t$ and %right sub-sequence 
 $\Zvec_{t+1},\ldots,\Zvec_n$ separately to find other possible change-points in the data. 
    Thus, this recursive algorithm detects multiple change-points in a hierarchical
    manner. %The procedure is as follows.
 
    %\begin{algorithm}[H]
    %\caption{Multiple Change-Point Estimation with Hierarchical Testing}
    %\label{alg:multiple_cp} \begin{algorithmic}[1] \Require $\delta$:
    %trimming proportion, $\text{minClust}$: min cluster size, $M$: max
    %number of change-points, $\Zvec_{T}=\{\Zvec_{i}\}_{i=1}^{T}$: data
    %\Ensure Final set of significant change-points $\mathcal{T}_{L}=\{\hat{\tau}_{1},\dots,\hat{\tau}_{L}\}$
    
    %\State Initialize $L\gets0$, $\mathcal{T}_{L}\gets\emptyset$ \While{$L<M$}
    %\State Partition $\mathbb{Z}_{T}^{\delta}$ into clusters $\hat{\mathcal{C}}_{1},\dots,\hat{\mathcal{C}}_{L+1}$
    %using $\mathcal{T}_{L}$ \For{each cluster $\hat{\mathcal{C}}_{i}$}
    %\If{$|\hat{\mathcal{C}}_{i}|\geq\text{minClust}$} \State Estimate
    %candidate $\hat{\tau}(i)$ and test statistic $\hat{\kappa}(i)$ \EndIf
    %\EndFor \State Select $i^{*}=\arg\max_{i}\hat{\tau}(i)$ and test
    %significance via permutation \If{significant} \State $\mathcal{T}_{L+1}\gets\mathcal{T}_{L}\cup\{\hat{\tau}(i^{*})\}$,
    %$L\gets L+1$ \Else \State \textbf{break} \EndIf \If{all remaining
    %$|\hat{\mathcal{C}}_{i}|<\text{minClust}$} \State \textbf{break}
    %\EndIf \EndWhile \State \Return $\mathcal{T}_{L}$ \end{algorithmic} 
%    \end{algorithm}

%Similar theoretical results also holds for the proposed multiple change point detection method as well. However, considering the ease of interpretibility of the results for the readers, we have decided to omit those for the present context. 

%\subsection{Analysis of simulated data sets}

To evaluate the performance of the proposed methods, 
we analyse four examples each involving 200-dimensional observations, while $n$ is kept fixed at 60. %as before but this time, we consider sequences of length 60. 
Brief descriptions of these examples are given below.

%In each of these examples, we generate first $20$ independent observations
%%$F_{1}$, next $20$ observations
%%%and \(\Zvec_{41},\dots,\Zvec_{60} \sim \F_{3}\),
%in $\R^{200}$. We consider four different choices of $\F_1, \F_2, \F_3$ as follows.

\begin{example}
\label{exa:11}
%Here we have observations from three normal distributions differing only in their locations. While 
$\Zvec_1,\Zvec_2,\ldots,\Zvec_{20} \stackrel{iid}{\sim}\mathcal{N}(\mathbf{0}_{200},\Sigma_{0})$,
$\Zvec_{21},\Zvec_{22},\ldots,\Zvec_{40} \stackrel{iid}{\sim}\mathcal{N}(\mathbf{1}_{200},\Sigma_{0})$,
and $\Zvec_{41}$ $,\Zvec_{42},\ldots,\Zvec_{60} \stackrel{iid}{\sim}\mathcal{N}\!\bigl(\tfrac12\mathbf{1}_{200},\Sigma_{0}\bigr)$, where $\Sigma_{0}$ is an auto-correlation matrix, whose ($i,j$)-th entry ($i,j=1,2,\ldots,200$) is given by $(0.9)^{|i-j|}$.
%\begin{figure}[h!]
%  \centering
  %\includegraphics[width=0.8\textwidth]{Plots/Multiple_Examples/Example1_Multi_Cpt.pdf}
  %\caption{The performance of the proposed, JM, and KCPA methods for Example \ref{exa:11}}  \label{fig:multi_ex11}
%\end{figure}
\end{example}

\begin{example}
\label{exa:12} Consider four Gaussian distributions $F_i=N({\bf 0}_{200},\big(\frac{1}{5}\big)^{i-1}{\bf I}_{200})$ $(i=1,2,3,4)$ differing in their scales. We generated $\Zvec_1,\ldots,\Zvec_{15}$ from $F_1$, $\Zvec_{16},\ldots,\Zvec_{30}$ from $F_2$, $\Zvec_{31},\ldots,\Zvec_{45}$ from $F_3$, and $\Zvec_{46},\ldots,\Zvec_{60}$ from $F_4$.

%\begin{figure}[b!]
%  \centering
  %\includegraphics[width=0.8\textwidth]{Plots/Multiple_Examples/Example2_Multi_Cpt.pdf}
  %\caption{The performance of the proposed, JM, and KCPA methods for Example \ref{exa:12}}
  %\label{fig:multi_ex12}
%\end{figure}
\end{example}

\begin{example}
\label{exa:13} Let 
$\F_{1}$,
$\F_{2}$,
and $\F_{3}$ be three uniform distributions on the hypersphere
$\mathcal{S}=\{x\in\R^{200}:\|x\|_{\ell_2}\le r_{2}\}$, hypercube $\mathcal{C}=\{x\in\R^{200}:|x_i|\le1\}$ and the $\ell_1$ ball $\mathcal{B}=\{x\in\R^{200}:\|x\|_{\ell_1}\le r_{1}\}$, respectively, where $r_{1}$ and $r_{2}$ are such that the volumes of $\mathcal{B},\mathcal{C}$ and $\mathcal{S}$ are equal. Here we have $\Zvec_{21},\Zvec_{22},\ldots,\Zvec_{40}\stackrel{iid}{\sim}F_1$, $\Zvec_{1},\Zvec_{2},\ldots,\Zvec_{20}\stackrel{iid}{\sim}F_2$ and $\Zvec_{41},\Zvec_{42},\ldots,\Zvec_{60}\stackrel{iid}{\sim}F_3$.
%\begin{figure}[t!]
 % \centering
  %\includegraphics[width=0.8\textwidth]{Plots/Multiple_Examples/Example3_Multi_Cpt.pdf}
   % \caption{The performance of the proposed, JM, and KCPA methods for Example \ref{exa:13}}
  %\label{fig:multi_ex13}
%\end{figure}
\end{example}

\begin{example}
\label{exa:14} $\Zvec_{1},\ldots,\Zvec_{20} \sim \mathrm{MGSN}(d=200,p=0.2,\mathbf{0}_{200}, \mathbf{I}_{200})$ \citep[see][]{Kundu2017},
$\Zvec_{21}$ $,\ldots,\Zvec_{40} \sim \mathcal{N}(\mathbf{0}_{200},\mathbf{I}_{200})$, and $\Zvec_{41},\ldots,\Zvec_{60} \sim \mathbb{T}_{200}(3)$, the $200$-dimensional standard $t$ distribution with $3$ degrees of freedom.
\end{example}

For each of these examples, we carried out our experiment 500 times. The bar diagrams in Figure \ref{fig:multi_ex13} show the distribution of the detected change-points by different methods. Note that NNG and MST, could not be used for multiple change-point detection. So, along with our proposed methods, here we report the results for MJ and KCPA only.

\begin{figure}[t]
%\vspace{0.1in}
  \centering
  \includegraphics[height=5.05in, width=\textwidth]{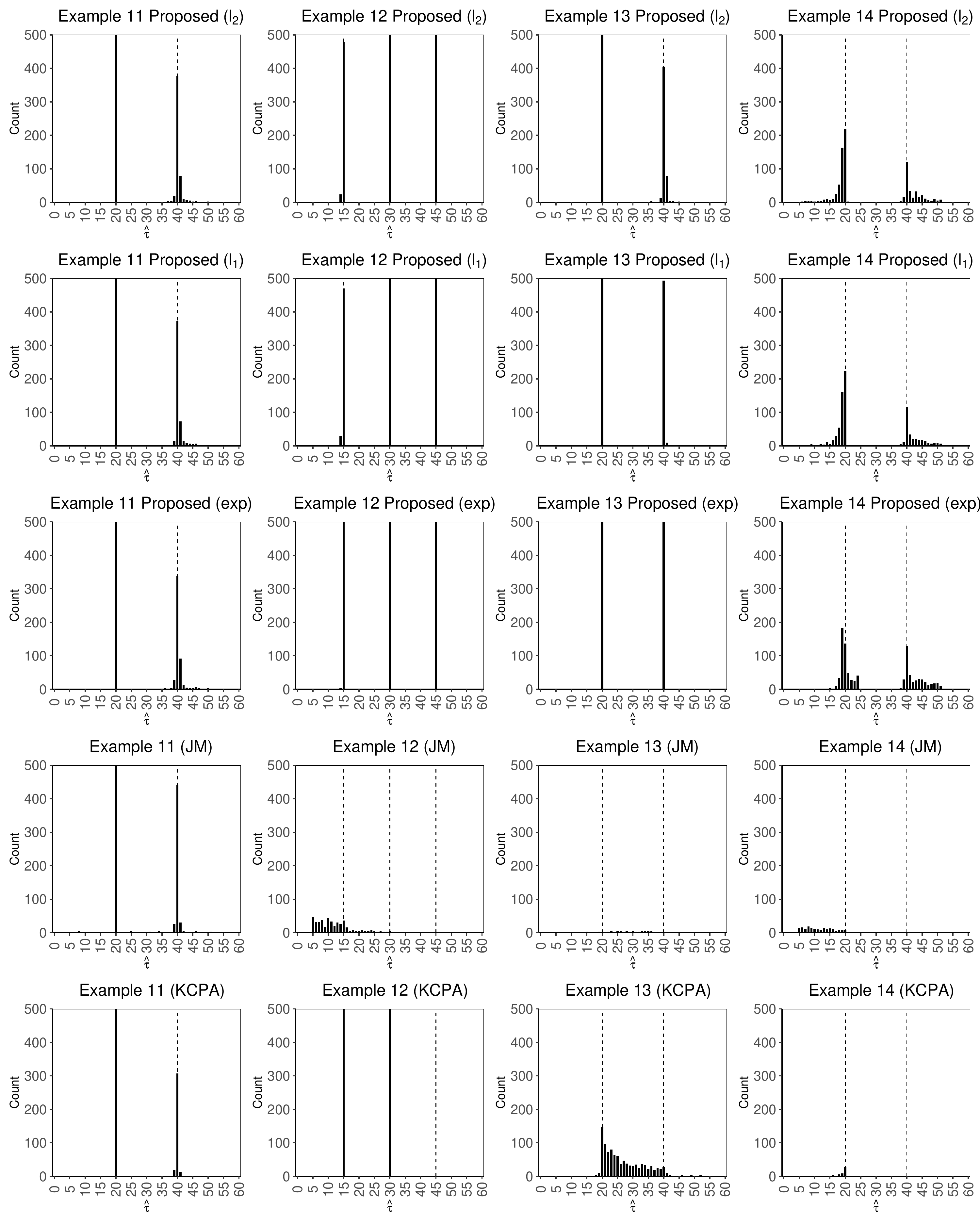}
  
  %0.25in}
    \caption{The performance of different methods in  Examples \ref{exa:11}-\ref{exa:14}}
  \label{fig:multi_ex13}
\end{figure}

Figure \ref{fig:multi_ex13} shows that MJ had the best performance in detecting location shifts in Example \ref{exa:11}. Here, all competing methods successfully detected the change-point at $\tau=20$ in almost all occasions. But MJ had a higher success rate in detecting the relatively smaller shift at $\tau=40$.

In Example \ref{exa:12}, our proposed methods outperformed MJ and KCPA in detecting the scale changes. %These proposed methods successfully detected all three change-points at 15, 30 and 45 in almost all cases. But, 
KCPA could not detect the change at $\tau=45$ even on a single occasion. The performance of MJ was even poorer. 
%Our proposed methods outperformed JM and KCPA 
In Example \ref{exa:13}, %as well. In this example, 
MJ and KCPA performed poorly, %detect the change-points in almost all occasions, 
but our proposed methods had excellent performance. Among them, Proposed-$\ell_1$ and Proposed-exp had an edge.

The distance convergence results discussed in Section \ref{sec:hd-behaviour} do not hold for the multivariate $t$ distribution considered in Example \ref{exa:14}. We included this example to see how the proposed methods perform when the conditions of Theorems \ref{thm:HDLSS-consistency}-\ref{thm:sparse2} fail to hold. Here, all three proposed methods had similar performance, and they performed much better than MJ and KCPA.

\section{Analysis of stock price data}
\label{subsec:real}
We analyse a stock price data set for further evaluation of the proposed methods. We obtained daily stock prices for all S\&P (Standard \& Poor's) 500 constituents from \href{https://finance.yahoo.com/}{Yahoo Finance} %using the \texttt{quantmod} and \texttt{tidyquant} packages in \texttt{R}. 
%These daily series were then aggregated to obtain the corresponding 
and considered the weekly closing prices starting from 5 January 2007 (week 0) to 1 January 2010 (week 156). After excluding equities with missing data, we had 412 stocks observed over these 157 weeks. Following \cite{Hsu1977} and \cite{chen1997testing}, for each stock, we computed week-to-week returns as $R_{t}=({P_{t}}/{P_{t-1}})-1$, where $P_t$ is the closing stock price for week $t$ ($t=1,2,\ldots,156$). Changes in the original stock prices are expected to be reflected in week-to-week returns. During this period, there was a global financial crisis. Widespread defaults on subprime mortgages endangered banks, resulting in unpredictable volatility in global equity markets. The resulting chaos triggered sharp changes in stock prices of various companies. We use our proposed methods as well as other competing methods to  detect these change-points.  %and compare them with the results obtained by other competing methods. %Demonstrating that our method can detect these shifts in historical return series provides a robust test of its effectiveness.

% As reported in \cite{financialcrisis2009} and other reliable sources (see e.g.  
% \href{https://lauder.wharton.upenn.edu/wp-content/uploads/2015/06/Chronology_Economic_Financial_Crisis.pdf}
% {Guill\'{e}n (2009)},
% \href{https://www.govinfo.gov/content/pkg/GPO-FCIC/pdf/GPO-FCIC.pdf}{FCIC Report},  
% \href{https://en.wikipedia.org/wiki/2008_financial_crisis}{Wikipedia}),  
% the 2008 financial crisis was 
As reported in \cite{financialcrisis2009} and \cite{guillen2009}, 
the 2008 financial crisis was marked by a sequence of key turning points:
(1) the collapse of the Bear Stearns hedge funds (20-Jul-2007, Week 27),  
(2) the official onset of the U.S. recession (28-Dec-2007, Week 50),  
(3) Bank of America’s acquisition of Countrywide (11-Jan-2008, Week 52),  
(4) the bankruptcy of Lehman Brothers (12-Sep-2008, Week 87),  
(5) the bailout of AIG and U.S. Treasury’s guarantee of money-market funds (19-Sep-2008, Week 88),  
(6) the seizure of Washington Mutual (26-Sep-2008, Week 89),  
(7) the Citigroup bailout and the launch of QE1 (28-Nov-2008, Week 98),  
(8) the passage of the American Recovery and Reinvestment Act (13-Feb-2009, Week 109),
(9) Public--Private Investment Program (PPIP) announced. G20 summit; looser accounting rules adopted. (03-Apr-2009, Week 116),
(10) the release of the bank stress test results under SCAP (15-May-2009, Week 122). If we consider events within a couple of weeks as a single combined event, we end up with seven distinct key turning points at weeks 27, 50-52, 87-89, 98, 116 and 122. Figure \ref{fig8:stock:returns} shows the median, first quartile and third quantiles of the returns at different time periods, where these seven turning points are marked using vertical lines or strips.

\begin{figure}[h]
  \centering
\includegraphics[height=2.250in,width=0.75\linewidth]{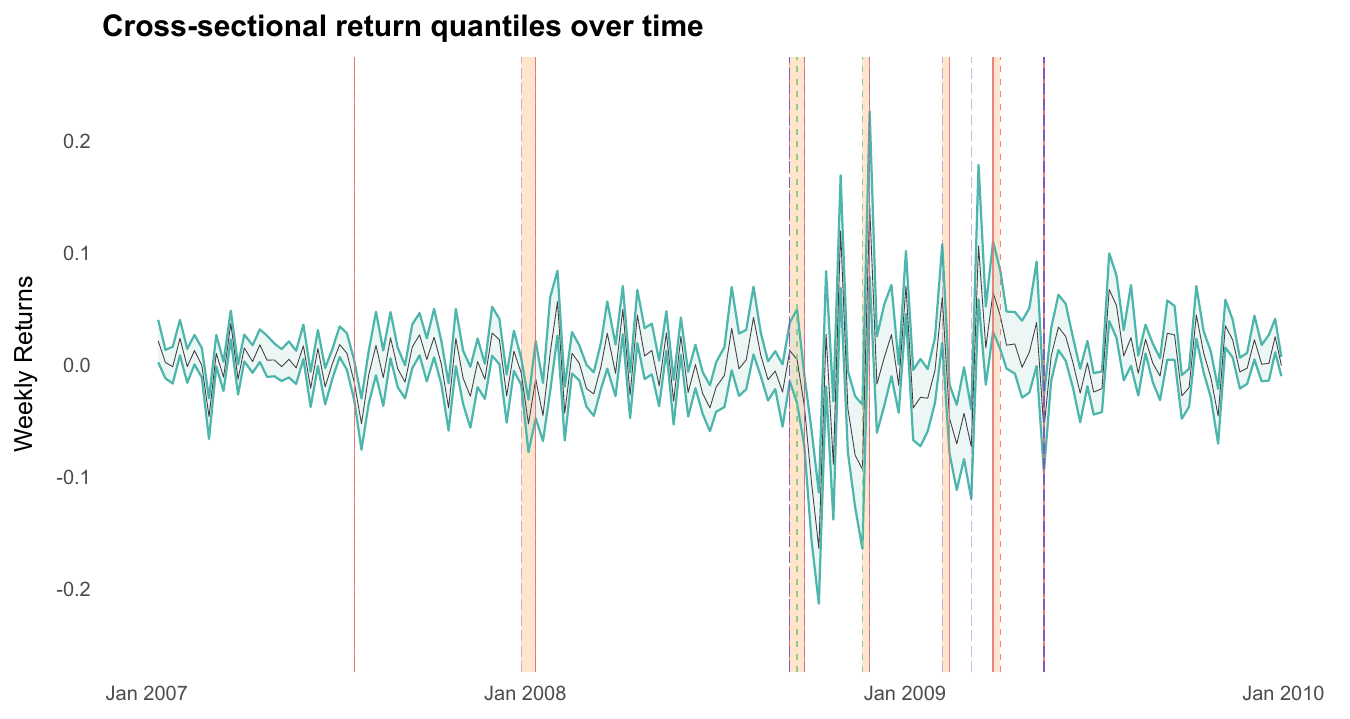}

%0.25in}
  \caption{Three quantiles of stock returns over time with vertical lines/strips indicating key turning periods.}
  \label{fig8:stock:returns}
\end{figure}

In this example, while MJ detected only two of these change-points (Week 88 and Week 97), Proposed-$\ell_2$ detected change-points at weeks 27, 52, 87 and 122. Proposed-$\ell_1$ also detected four change-points (at weeks 27, 52, 87, 122), but Proposed-exp could detect all seven turning points at weeks 27, 52, 89, 98, 109, 115 and 122. The performance of KCPA critically depends on the associated penalty parameter $C$.
For small values of $C$, it detected changes at weeks 50, 89, 98, 108 and 122, but it also detected an additional change-point (at week 112) outside the above list; probably  it slightly missed the actual location at weeks 108-109. However, for larger values of $C$, it could detect only two change-points at weeks 87 and 122.

\section{Concluding remarks}
\label{sec:Concluding}
In this article, we proposed some methods for change-point detection 
based on averages of inter-point distances. 
Our methods can be conveniently used for high-dimensional data even when the dimension is much larger than the sample size. We established the high-dimensional consistency of these methods under appropriate regularity conditions, and amply demonstrated their usefulness by analysing several simulated and real data sets.
%While our method based on $\ell_2$ distance have empirical performance comparable to JM and KCPA in detecting location shift, it usually outperforms these two competitors for detecting changes in scales. Moreover, it performs much better than graph-based
%methods like KNN or MST in a wide variety of examples. 
% In high dimensions, while most of the existing methods based on Euclidean distances are unable to detect changes outside the first two moments,  Proposed-$\ell_1$ and Proposed-exp can differentiate between two high-dimensional distributions differing in their one-dimensional marginals even when they have the same location and scale. Among these two, Proposed-exp is often preferred bacuse of its better robustness properties. %Analysing several simulated and real data sets, we have amply demonstrated this in this article. 
% However, in some rare cases, where the two high-dimensional distributions have the same one-dimensional marginals, but differ in higher-order marginals, like other existing methods, our methods
% may not work well.  
In high dimensions, while most of the existing methods based on Euclidean distances are unable to detect changes outside the first two moments, Proposed-$\ell_1$ and Proposed-exp can differentiate between two high-dimensional distributions differing in their one-dimensional marginals even when they have the same location and scale. Among these two, we recommend using Proposed-exp as it satisfies the bounded distance function condition of Theorem~\ref{HDHSS 2}. %Since it is based on a bounded $\psi$-function, it has better robustness properties and  
However, in some rare cases, where the two high-dimensional distributions have the same one-dimensional marginals but differ in higher-order marginals, like other existing methods, our methods may not work well. To take care of this problem,  we can use a block version of $\ell_1$ or exponential distances. We can make a partition of $d$-dimensional vector $\zvec$ into $b$ blocks $\zvec^{(1)}, \zvec^{(2)}, \ldots,\zvec^{(b)}$,  where the $i$-th block contains $d_i$ variables ($\sum_{i=1}^{b}d_i=d$). The block version of $\varphi_{h,\psi}$ between two observations $\xvec$ and $\yvec$ can be defined as $\varphi^B_{h,\psi}(\xvec,\yvec)=h\Big(\frac{1}{B} \sum_{i=1}^{b}\psi(\|\xvec^{(i)}-\yvec^{(i)}\|)\Big)$. If $b = d$ and $d_i = 1$ for all $i=1,\ldots,b$, it leads to the usual version $\varphi_{h,\psi}$ discussed before. The change-point detection methods  based on the block version of $\ell_1$ distance or exponential distance can discriminate between two high-dimensional distributions unless their marginal block distributions (joint distribution of the random variables in a block) are identical. As long as the block sizes are bounded (which implies $b \rightarrow \infty$ as $d \rightarrow \infty$), results similar to Theorems \ref{thm:HDLSS-gen-consistency} and \ref{thm:sparse2}, with univariate marginals replaced by marginal block distributions, can be proved under similar assumptions. 
For example, if we use blocks of size 2, we can discriminate between two distributions differing in their correlation structures. However, the optimal number of blocks, optimal block sizes and their configurations in a given problem are some open issues to be resolved. This can be investigated as a future research problem.

\bibliographystyle{apalike}
\bibliography{citation_corrected}

\section{Appendix}
\subsection{Appendix A: Some auxiliary mathematical results}
\begin{applem} \label{sub-exponential:conc}
	Let $\Xvec,\Yvec\in\mathbb{R}^{d}$ be independent random vectors whose
	one-dimensional projections are uniformly sub-exponential, i.e.,
	\[
	\sup_{\|v\|_{2}=1}\|\langle v,\Xvec\rangle\|_{\psi_{1}}\le K_{X},
	\qquad
	\sup_{\|v\|_{2}=1}\|\langle v,\Yvec\rangle\|_{\psi_{1}}\le K_{Y},
	\]
	where $\|U\|_{\psi_{1}}:=\inf\{c>0:\ \E\exp(|U|/c)\le2\}$ is the Orlicz norm of a real-valued random variable $U$ \citep[see Chapter 2 from][]{wainwright2019high}.
	Define $K=(K_{X}+K_{Y})$, $\Zvec=\Xvec-\Yvec$, $V=\|\Zvec\|_{2}$, and $\mu=\E V$. 
	Then there exists an absolute constant $c>0$ such that, for all $t\ge0$,
	\[
	\P\big(|V-\mu|\ge t\big)\ \le\
	2\exp\!\left[-\,c\,\min\!\left(\frac{t^{2}}{K^{2}d},\,\frac{t}{K\sqrt d}\right)\right].
	\]
	% or equivalently, for $V_d:=V/\sqrt d$ and $\mu_d:=\E V_d$,
	% \[
	% \P\big(|V_d-\mu_d|\ge t\big)\ \le\
	% 2\exp\!\left[-\,c\,\min\!\left(\frac{t^{2}}{K^{2}},\,\frac{t}{K}\right)\right].
	% \]
\end{applem}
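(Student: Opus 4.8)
The plan is to replace the norm $u=\|Z\|_{2}$ by its square $u^{2}=\|Z\|_{2}^{2}$, establish a Bernstein/Hanson--Wright-type concentration for the quadratic functional $\|Z\|_{2}^{2}$ around its mean, upgrade this to concentration of $u$ around $\mu_{2}:=(\E u^{2})^{1/2}$ via an elementary pointwise inequality, and finally move the centering from $\mu_{2}$ to $\mu=\E u$.

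First I would record the only place the hypothesis enters. Since $\|\cdot\|_{\psi_{1}}$ is a genuine norm, for every unit vector $v$ we have $\|\langle v,Z\rangle\|_{\psi_{1}}\le\|\langle v,X\rangle\|_{\psi_{1}}+\|\langle v,Y\rangle\|_{\psi_{1}}\le K_{0}$, where $K_{0}:=K_{X}+K_{Y}$ (independence of $X$ and $Y$ is not needed here). In particular each coordinate obeys $\|Z_{i}\|_{\psi_{1}}\le K_{0}$, every one-dimensional projection of $Z$ is sub-exponential with constant $K_{0}$, and $\mu_{2}^{2}=\E\|Z\|_{2}^{2}=\sum_{i}\E Z_{i}^{2}$ is of order at most $dK_{0}^{2}$ (the $\psi_{1}$ bound also controls the means $\E Z_{i}$). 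If $\mu_{2}=0$ then $Z=0$ almost surely and there is nothing to prove, so assume $\mu_{2}>0$.

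The key reduction is the elementary inequality $|u-\mu_{2}|\le\bigl|u^{2}-\mu_{2}^{2}\bigr|/(u+\mu_{2})\le\bigl|u^{2}-\mu_{2}^{2}\bigr|/\mu_{2}$, valid because $u\ge0$; hence $\P\bigl(|u-\mu_{2}|\ge t\bigr)\le\P\bigl(\bigl|\|Z\|_{2}^{2}-\E\|Z\|_{2}^{2}\bigr|\ge\mu_{2}t\bigr)$, and it remains to concentrate $\|Z\|_{2}^{2}=\sum_{i}Z_{i}^{2}$. In the settings of interest the coordinates of $Z$ are independent, so $\|Z\|_{2}^{2}$ is a sum of independent variables $Z_{i}^{2}$, each sub-Weibull of order $1/2$ (that is, $\P(Z_{i}^{2}>s)\le 2e^{-c\sqrt{s}/K_{0}}$, so $\|Z_{i}^{2}\|_{\psi_{1/2}}\le K_{0}^{2}$); the Bernstein-type inequality for sums of independent sub-Weibull$(1/2)$ variables then gives $\P\bigl(\bigl|\sum_{i}(Z_{i}^{2}-\E Z_{i}^{2})\bigr|\ge r\bigr)\le 2\exp\bigl(-c\min(r^{2}/(dK_{0}^{4}),\sqrt{r}/K_{0})\bigr)$. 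Substituting $r=\mu_{2}t$ and using $\mu_{2}^{2}\asymp dK_{0}^{2}$, the Gaussian term becomes $\asymp t^{2}/K_{0}^{2}$, and a short case split (on whether $t\lesssim\sqrt{d}\,K_{0}$, together with the observation that once $t\gtrsim\mu_{2}$ the event already forces $\|Z\|_{2}^{2}\gtrsim t^{2}$) shows the whole expression is $\le 2\exp(-c\min(t^{2}/K_{0}^{2},t/K_{0}))$.

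Finally, integrating this tail bound over $t\ge0$ yields $\E|u-\mu_{2}|\le CK_{0}$, hence $|\mu-\mu_{2}|\le CK_{0}$; for $t\ge 2CK_{0}$ the event $|u-\mu|\ge t$ implies $|u-\mu_{2}|\ge t/2$, and for $t<2CK_{0}$ the asserted bound holds trivially once $c$ is small enough that its right-hand side exceeds $1$. Taking $K=C_{*}(K_{X}+K_{Y})$ with $C_{*}$ large enough absorbs all implicit constants. I expect the main obstacle to be the concentration of $\|Z\|_{2}^{2}$: the squared, $\psi_{1/2}$-tailed summands make the transition between the ``Gaussian'' regime $t\lesssim K_{0}$ and the ``exponential'' regime $t\gtrsim K_{0}$ delicate to track, and in the fully general setting (coordinates of $Z$ not independent) one must replace the Bernstein step by a sub-exponential Hanson--Wright inequality for $Z^{\top}Z$, whose parameters are weaker and require extra care to still deliver the stated bound.
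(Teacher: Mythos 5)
Your route goes through the squared norm, and this is where it breaks down in the generality the lemma requires. The hypothesis is only that the one-dimensional projections of $\Xvec$ and $\Yvec$ are uniformly sub-exponential; nothing is assumed about independence of the coordinates of $\Zvec=\Xvec-\Yvec$, and the lemma is invoked later precisely for distributions where such independence is not available. Your central step — treating $\|\Zvec\|_2^2=\sum_i Z_i^2$ as a sum of independent sub-Weibull$(1/2)$ variables and applying the corresponding Bernstein inequality — therefore uses an assumption that is not in the statement, and your proposed fallback (Hanson--Wright for $\Zvec^{\top}\Zvec$) also requires independent coordinates, so it does not repair this. In addition, even in the independent-coordinate case your argument needs the lower bound $\mu_2^2=\E\|\Zvec\|_2^2\gtrsim d K_0^2$: you correctly note only that $\mu_2^2\lesssim dK_0^2$, but then use ``$\mu_2^2\asymp dK_0^2$'' to turn the Gaussian term $\mu_2^2t^2/(dK_0^4)$ into $t^2/K_0^2$. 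That lower bound does not follow from the $\psi_1$ hypothesis (coordinates can have variance far below $K_0^2$ while keeping $\psi_1$ norm of order $K_0$), and without it the division step $|u-\mu_2|\le|u^2-\mu_2^2|/\mu_2$ loses the dimension-free Gaussian regime claimed in the lemma.

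The paper avoids the squared norm entirely, and that is the missing idea. It symmetrizes ($\E e^{\lambda(u-\mu)}\le\E e^{\lambda(u-u')}$ with $u'$ an independent copy) and then uses convexity of the Euclidean norm to bound $\|\Zvec\|_2-\|\Zvec'\|_2\le\langle v^{*},\Zvec-\Zvec'\rangle$ with $v^{*}=\Zvec/\|\Zvec\|_2$, so the deviation of the norm is dominated by a single one-dimensional projection of $\Zvec-\Zvec'$. The uniform $\psi_1$ bound on projections then controls the moment generating function directly, and a Chernoff argument yields the two-regime tail with constants depending only on $K$ — no coordinate independence, no variance lower bound, and no passage through $\|\Zvec\|_2^2$. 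If you want to keep a proof in your spirit, you would need a concentration mechanism for the norm itself that exploits only the projection hypothesis (as the convexity trick does), rather than coordinatewise Bernstein or Hanson--Wright for the quadratic form.
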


\begin{proof}
	If $S$ is a random variable with $\|S\|_{\psi_1}\le K$, i.e. $\E\exp(|S|/K)\le 2$, then for any $t>0$, we have
	$
	\P(|S|>t)\le 2e^{-t/K}
	$ \citep[see eq. (2.73) from][]{wainwright2019high}.
	Now, using integration by parts, for $m\ge 1$,  we get
	\[
	\E|S|^m
	= m\int_0^\infty t^{m-1}\P(|S|>t)\,dt
	\le 2m\int_0^\infty t^{m-1}e^{-t/K}\,dt
	=2m!\,K^m.
	\]
	So, we have $|\E S|\le \E|S|\le 2K$. Now, using this result and the triangle inequality, for $m\ge 2$, one gets
	\begin{align*}
		& \E|S-\E S|^{m}\le2^{m-1}\big(\E|S|^{m}+|\E S|^{m}\big)
		& \le2^{m-1}\big(2m!K^{m}+(2K)^{m}\big)\le2^{m+1}m!K^{m}.
	\end{align*}
	So, 
	%expanding the power series for the centered moment generating function (MGF) and using the bound on
	%$\E|S-\E S|^m$, 
	for $|\lambda|\le 1/(4K)$ (i.e., $1-2K|\lambda|\ge 1/2$), we get
	\begin{align*}
		\E e^{\lambda(S-\E S)}
		&=1+\sum_{m\ge 2}\frac{\lambda^m\E(S-\E S)^m}{m!}
		\le 1+\sum_{m\ge 2}\frac{|\lambda|^m\E|S-\E S|^m}{m!}
		\\
		&\le 1+\sum_{m\ge 2}2^{m+1}(K|\lambda|)^m
		=1+\frac{8(K|\lambda|)^2}{1-2K|\lambda|}\\
		&\le 1 +16(K|\lambda|)^2 \le \exp\big(16K^2\lambda^2\big),
	\end{align*}
	Thus there exist
	$c_1,C_0>0$ (here $c_1=1/4$, $C_0=16$) such that
	\[
	\E e^{\lambda(S-\E S)}\le \exp(C_0K^2\lambda^2)\qquad\text{for }|\lambda|\le c_1/K.
	\]
	%Also, using the moment bound,
	%for $0\le \lambda<1/(2K)$, we get
	%\begin{align*}
	%\E e^{\lambda S}
	%%\le \E e^{\lambda|S|}
	%=\sum_{m\ge 0}\frac{\lambda^m\E|S|^m}{m!}
	%& \le 1+\sum_{m\ge 1}2(K\lambda)^m\\
	%&=\frac{1}{1-2K\lambda}
	%\le (1-c_2K\lambda)^{-1},
	%\end{align*}
	%for an absolute $c_2\in(0,2]$. Summarization of these results leads to
	%\begin{align}\label{eq:mgf-psi1}
	%\E e^{\lambda(S-\E S)}\le \exp(C_0K^2\lambda^2)\ \ \text{for }|\lambda|\le c_1/K,\nonumber
	%\\ %\E e^{\lambda S}\le (1-c_2K\lambda)^{-1}\ \ \text{for }0\le \lambda<1/(2K).
	%\end{align}
	As a consequence of the standard Chernoff optimization \citep[see][]{foss2013introduction},
	there exist absolute constants $c,C>0$ such that for all $t\ge 0$,
	\begin{equation}\label{eq:scalar-subexp-tail}
		\P\big(|S-\E S|\ge t\big)\le 2\exp\!\left[-c\min\!\left(\frac{t^2}{K^2},\frac{t}{K}\right)\right].
	\end{equation}
	
	Let $\Zvec=\Xvec-\Yvec$ and $V=\|\Zvec\|_2$. By the triangle inequality for Orlicz norms \citep[see][]{orlicz1991},
	for every unit vector $v$, we have
	\[
	\|\langle v,\Zvec\rangle\|_{\psi_1}
	=\|\langle v,\Xvec\rangle-\langle v,\Yvec\rangle\|_{\psi_1}
	\le \|\langle v,\Xvec\rangle\|_{\psi_1}+\|\langle v,\Yvec\rangle\|_{\psi_1}
	\le K_X+K_Y.
	\]
	So, for $K=(K_X+K_Y)$, we have
	\begin{equation}\label{eq:projZ}
		\sup_{\|v\|_2=1}\|\langle v,\Zvec\rangle\|_{\psi_1}\le K.
	\end{equation}
	In particular, for the coordinate projections $Z_i:=\langle e_i,\Zvec\rangle$, we have
	$\|Z_i\|_{\psi_1}\le K$ for all $i\in\{1,\dots,d\}$, and hence
	\begin{equation}\label{eq:Zi-mom}
		\E|Z_i|^m\le 2m!\,K^m,\qquad m\ge 1.
	\end{equation}
	
	Now, for any integer $m\ge 2$. we get  $\|z\|_2\le d^{1/2-1/m}\|z\|_m$ (follows from H{\"o}lder inequality), or equivalently,
	$\|z\|_2^m\le d^{m/2-1}\sum_{i=1}^d |z_i|^m$, and hence
	\[
	V^m=\|\Zvec\|_2^m\le d^{m/2-1}\sum_{i=1}^d |Z_i|^m.
	\]
	Taking expectations and applying \eqref{eq:Zi-mom}, we get
	\[
	\E V^m\le d^{m/2-1}\sum_{i=1}^d \E|Z_i|^m
	\le d^{m/2}\,2m!\,K^m.
	\]
	Now, using the above bound and the power series, for any $c_0\ge 3$, we have 
	\begin{align*}
		\E\exp\Big(\frac{V}{c_0K\sqrt d}\Big)
		&=\sum_{m\ge 0}\frac{\E V^m}{m!(c_0K\sqrt d)^m}
		\le 1+\sum_{m\ge 1}\frac{2m!K^m d^{m/2}}{m!(c_0K\sqrt d)^m}
		\\
		&=1+2\sum_{m\ge 1}c_0^{-m}
		=1+\frac{2}{c_0-1}
		\le 2.
	\end{align*}
	Therefore, from the definition of Orlicz norm, we have $\|V\|_{\psi_1}\le c_0K\sqrt d$.
	and hence, \eqref{eq:scalar-subexp-tail} gives
	\[
	\P\big(|V-\mu|\ge t\big)=\P\big(|V-\E V|\ge t\big)\le2\exp\!\left[-c\min\!\left(\frac{t^{2}}{K^{2}d},\frac{t}{K\sqrt{d}}\right)\right],
	\] for all $t\ge 0$. 
	Now for $V_d:=V/\sqrt d$ and $\mu_d:=\E V_d$, it is easy to check that
	\[
	\P\big(|V_d-\mu_d|\ge t\big)
	=\P\big(|V-\mu|\ge t\sqrt d\big)
	\le 2\exp\!\left[-c\min\!\left(\frac{t^2}{K^2},\frac{t}{K}\right)\right].
	\]
	This completes the proof.
\end{proof}
\begin{applem}\label{lem:block+perm-U}
	Let $\Zvec_1,\ldots,\Zvec_n$ be independent random vectors in $\mathbb R^d$ such that
	$\sup_{\|v\|_2=1}\ \|\langle v,\Zvec_i-\Zvec_j\rangle\|_{\psi_1}\ \le\ K$ for any $i\neq j$. For $b\in\{2,\ldots,n\}$ define $U_b=\binom{b}{2}^{-1}\sum_{1\le i<j\le b}\|\Zvec_i-\Zvec_j\|$ and $\mu_b=\E U_b$. For $k=\lfloor b/2\rfloor$, there exists an absolute constant $c>0$ such that for all $\varepsilon>0$,
	\begin{align}
		\P\big(|U_b-\mu_b|\ge \varepsilon\big)\ \le\
		2\exp\!\left\{-\,c\,k\,\min\!\Big(\frac{\varepsilon^2}{K^2d},\frac{\varepsilon}{K\sqrt{d}}\Big)\right\},
		\label{sub-expo-conc}
	\end{align}
\end{applem}

\begin{proof}
	Let $g(x,y)=\|x-y\|_2$ and define the centered kernel
	\[
	\nu(\Zvec,\Zvec'):=g(\Zvec,\Zvec')-\E g(\Zvec,\Zvec'), 
	\]
	where $\Zvec,\Zvec'$ are independent random vectors and $g(\Zvec,\Zvec')=\|\Zvec-\Zvec'\|$. From Lemma A.~\ref{sub-exponential:conc} shows that
	there exists $c_0>0$ such that
	\[
	\P\big(|\nu(\Zvec,\Zvec')|\ge x\big)
	\le 2\exp\!\left[-\,c_0\min\!\left(\frac{x^2}{K^2 d},\frac{x}{K\sqrt d}\right)\right],
	\qquad x\ge 0.
	\]
	In particular, we have
	$L_1=2\max\{\exp(c_0/4),1\}$ and $
	L_2:=\frac{c_0}{K\sqrt d}$ such that
	\begin{equation}\label{eq:nu-tail-L1L2}
		\P\big(|\nu(\Zvec,\Zvec')|\ge x\big)
		\le L_1\exp(-L_2 x),\qquad x\ge 0,
	\end{equation}
	%where
	%$
	%L_1:=2$ and .
	%Indeed, for all $x\ge 0$,
	%\[
	%\min\!\left(\frac{x^2}{K^2 d},\frac{x}{K\sqrt d}\right)\ge \frac{x}{K\sqrt d}
	%\quad\text{when }x\ge K\sqrt d,
	%\]
	%and the remaining bounded range $0\le x\le K\sqrt d$ is absorbed into the constant $L_1$.
	
	%Now write
	Note that $U_b$ is a second-order U-statistic with mean $\mu_b$ and 
	\[
	U_b-\mu_b
	=\binom{b}{2}^{-1}\sum_{1\le i<j\le b}\nu(\Zvec_i,\Zvec_j),
	%\qquad
	%\mu_b=\E U_b.
	\]
	with $\E(\nu(\Zvec_i,\Zvec_j))=0$ for all $i\neq j$. So, using \eqref{eq:nu-tail-L1L2} and Lemma~A.3 of \cite{Subgaussian_Concentration_Ning2017}, for $k=\lfloor{b}/{2}\rfloor$, we get a constant %applies with
	%$n=b$, kernel $\nu$, and tail constants $(L_1,L_2)$ as in
	%\eqref{eq:nu-tail-L1L2}.  Let $k=\lfloor{b}/{2}\rfloor$, which coincides with the quantity $k=\lfloor n/m\rfloor$ in
	%\cite{Subgaussian_Concentration_Ning2017} when $m=2$. By Lemma~A.3 of \cite{Subgaussian_Concentration_Ning2017}, there exists an absolute constant
	$c>0$ such that for all $\varepsilon>0$,
	\[
	\P\big(|U_b-\mu_b|\ge \varepsilon\big)
	\le 2\exp\!\left\{-\,c\,k\,
	\min\!\left(\frac{L_2^2\varepsilon^2}{L_1^2},\frac{L_2\varepsilon}{L_1}\right)\right\}.
	\]
	Substituting the explicit values of $L_1$ and $L_2$ yields
	\[
	\P\big(|U_b-\mu_b|\ge \varepsilon\big)
	\le 2\exp\!\left\{-\,c\,k\,
	\min\!\left(\frac{\varepsilon^2}{K^2 d},\frac{\varepsilon}{K\sqrt d}\right)\right\}.
	%0.5in}
	\]
\end{proof}

\begin{applem}
	\label{lem:perm_stat_vanish_BG}
	Let $\Zvec_{1},\ldots,\Zvec_{\tau}\stackrel{iid}{\sim}\F_{1}$ and 
	$\Zvec_{\tau+1},\ldots,\Zvec_{n}\stackrel{iid}{\sim}\F_{2}$, where
	$\tau=\lfloor n\theta\rfloor,\ \theta\in(0,1)$.
	Let $\Pi$ be a uniform random permutation of $\{1,\ldots,n\}$, independent of
	$\{\Zvec_i\}_{i=1}^n$. For $t\in\{2,\ldots,n-2\}$, define $T^{(\Pi)}_{rs}(t)$ as the permuted version of the statistics $T_{rs}(t)$ in \ref{eq:estimator},
	$w(t):={t(n-t)}/{n^{2}}$ and 
	\[
	D^{(\Pi)}(t):=\{T_{12}^{(\Pi)}(t)-T_{11}^{(\Pi)}(t)\}^{2}+\{T_{12}^{(\Pi)}(t)-T_{22}^{(\Pi)}(t)\}^{2}.
	\]
	Let $\mathcal{A}_{n}:=\{\lfloor n\delta_{n}\rfloor,\ldots,\lceil(1-\delta_{n})n\rceil\}$, where
	$n\delta_n\to\infty$ and ${n\delta_n}/{d} \log n
	\to \infty$ as $n \to \infty$.
	Assume that $\F_1$, $\F_2$ are sub-exponential distributions mentioned in Lemma A.\ref{sub-exponential:conc}. Then, as $n\to\infty$,
	$
	\widehat{S}_n^{(\Pi)}:=\max_{t\in\mathcal{A}_n} w(t)\,D^{(\Pi)}(t)\xrightarrow{\P}0.
	$ Consequently, for any fixed $\alpha\in(0,1)$, the permutation $(1-\alpha)$-quantile cutoff
	$\widehat c_{1-\alpha}$ based on $\{\widehat{S}_n^{(\Pi)}\}$ satisfies $\widehat c_{1-\alpha}\xrightarrow{\P}0.$
\end{applem}

\begin{proof}
	Recall that for $t\in\{2,\dots,n-2\}$, we have 
	\begin{equation}
		T_{11}^{(\Pi)}(t):=\binom{t}{2}^{-1}\sum_{1\le i<j\le t}\norm{\Zvec_{\Pi(i)}-\Zvec_{\Pi(j)}},\label{eq:T11Pi}
	\end{equation}
	\begin{equation}
		T_{22}^{(\Pi)}(t):=\binom{n-t}{2}^{-1}\sum_{t+1\le i<j\le n}\norm{\Zvec_{\Pi(i)}-\Zvec_{\Pi(j)}},\label{eq:T22Pi}
	\end{equation}
	\begin{equation}
		T_{12}^{(\Pi)}(t):=\frac{1}{t(n-t)}\sum_{i=1}^{t}\sum_{j=t+1}^{n}\norm{\Zvec_{\Pi(i)}-\Zvec_{\Pi(j)}}.\label{eq:T12Pi}
	\end{equation}
	\medskip{}
	For independent random vectors $\Xvec,\Xvec^{\prime}\stackrel{iid}{\sim}F_{1}$, $\Yvec,\Yvec^{\prime}\stackrel{iid}{\sim}F_{2}$, let us define
	\begin{equation}
		\eta_{11}:=\E\norm{\Xvec-\Xvec^{\prime}},\quad\eta_{22}:=\E\norm{\Yvec-\Yvec^{\prime}},\quad\eta_{12}:=\E\norm{\Xvec-\Yvec}.\label{eq:mu11mu22mu12}
	\end{equation}
	Define the complete-sample U-statistic and its
	mean as
	\begin{equation}
		U_{n}:=\binom{n}{2}^{-1}\sum_{1\le r<s\le n}\norm{\Zvec_{r}-\Zvec_{s}},\quad\text{and}\quad\bar{\mu}_{n}:=\E U_{n}.\label{eq:mu_n}
	\end{equation}
	A direct decomposition over within- and between-sample pairs yields
	\begin{equation}
		\bar{\mu}_{n}=\binom{n}{2}^{-1}\Big\{\binom{\tau}{2}\eta_{11}+\binom{n-\tau}{2}\eta_{22}+\tau(n-\tau)\eta_{12}\Big\}.\label{eq:bar_mu_n_explicit}
	\end{equation}
	
	\medskip{}
	\noindent Let $K_t$ be the number of observations from $\F_1$ in the set of first $t$ observations in the permuted sample. This random variable is defined as
	\begin{equation}
		K_{t}:=\#\{\Pi(1),\dots,\Pi(t)\}\cap\{1,\dots,\tau\},\label{eq:Kt_def}
	\end{equation}
	Clearly, $K_{t}$ follows a hypergeometric distribution with $\E K_{t}=t\,\frac{\tau}{n}.\label{eq:EKt}$ Define
	\begin{equation}
		a:=K_{t},~b:=t-K_{t},~c:=\tau-K_{t},~d:=(n-\tau)-b=n-t-\tau+K_{t}.\label{eq:abcd_def}
	\end{equation}
	Conditioning on $\Pi$ (or equivalently, on $K_t$), we have 
	\begin{equation}
		\E\big[T_{11}^{(\Pi)}(t)\mid\Pi\big]=G_{11,t}(K_{t}):=\frac{\binom{a}{2}\eta_{11}+\binom{b}{2}\eta_{22}+ab\,\eta_{12}}{\binom{t}{2}},\label{eq:cond_mean_T11}
	\end{equation}
	\begin{equation}
		\E\big[T_{22}^{(\Pi)}(t)\mid\Pi\big]=G_{22,t}(K_{t}):=\frac{\binom{c}{2}\eta_{11}+\binom{d}{2}\eta_{22}+cd\,\eta_{12}}{\binom{n-t}{2}},\label{eq:cond_mean_T22}
	\end{equation}
	\begin{equation}
		\E\big[T_{12}^{(\Pi)}(t)\mid\Pi\big]=G_{12,t}(K_{t}):=\frac{ac\,\eta_{11}+bd\,\eta_{22}+(ad+bc)\,\eta_{12}}{t(n-t)}.\label{eq:cond_mean_T12}
	\end{equation}
	\noindent Taking expectation w.r.t. $K_t$, from \eqref{eq:cond_mean_T11}--\eqref{eq:cond_mean_T12}, we have
	\begin{equation}
		\E T_{11}^{(\Pi)}(t)=\E T_{22}^{(\Pi)}(t)=\E T_{12}^{(\Pi)}(t)=\bar{\mu}_{n}.\label{eq:uncond_means_equal}
	\end{equation}
	
	\medskip{}
	\noindent Since $F_1$ and $F_2$ are sub-exponential, for any independent $\mathbf{U},\mathbf{V}$ with $\mathbf{U}\sim\F_{a}$ and $\mathbf{V}\sim\F_{b}$ for $a,b\in\{1,2\}$,  from Lemma A.~\ref{sub-exponential:conc}, for $x\ge 0$, we have 
	\begin{equation}
		\P\Big(\left|\norm{\mathbf{U}-\mathbf{V}}-\E\norm{\mathbf{U}-\mathbf{V}}\right|\ge x\Big)\le2\exp\Big(\hspace{-0.05in}-c_{0}\min\Big\{ \frac{x^{2}}{K^{2}d},\frac{x}{K\sqrt{d}}\Big\} \Big).\label{eq:pair_tail}
	\end{equation}
	For any fixed $t$, conditioned on $\Pi$ $\{\Zvec_{\Pi(i)}\}_{i=1}^{t}$
	are independent (not necessarily identically distributed), and $T_{11}^{(\Pi)}(t)$
	is a second-order U-statistic. Define %centered pair
	%kernels 
	\begin{equation}
		v_{ij}:=\norm{\Zvec_{\Pi(i)}-\Zvec_{\Pi(j)}}-\E\norm{\Zvec_{\Pi(i)}-\Zvec_{\Pi(j)}},\qquad1\le i<j\le t.\label{eq:uij_def}
	\end{equation}
	Note that \eqref{eq:pair_tail} holds for each $\nu_{ij}$ uniformly in $i,j$. Note that
	\begin{equation}
		T_{11}^{(\Pi)}(t)-\E[T_{11}^{(\Pi)}(t)\mid\Pi]=\binom{t}{2}^{-1}\sum_{1\le i<j\le t}\nu_{ij}.\label{eq:T11_minus_condmean}
	\end{equation}
	So, it follows from Lemma~A.3 of \citet{Subgaussian_Concentration_Ning2017} that
	there exists a constant $c>0$ such that for all $\varepsilon>0$,
	\begin{equation}
		\P\Big(\big|T_{11}^{(\Pi)}(t)-\E[T_{11}^{(\Pi)}(t)\mid\Pi]\big|\ge\varepsilon\ \Big|\ \Pi\Big)\le2\exp\Big(-c\,\Big\lfloor\frac{t}{2}\Big\rfloor\,\min\Big(\frac{\varepsilon^{2}}{K^{2}d},\frac{\varepsilon}{K\sqrt{d}}\Big)\Big).\label{eq:conc_T11_cond}
	\end{equation}
	Since the bounds do not depend on $\Pi$, removing the conditioning on $\Pi$ gives
	\begin{equation}
		\P\Big(\big|T_{11}^{(\Pi)}(t)-G_{11,t}(K_{t})\big|\ge\varepsilon\Big)\le2\exp\Big(-c\,\Big\lfloor\frac{t}{2}\Big\rfloor\,\min\Big(\frac{\varepsilon^{2}}{K^{2}d},\frac{\varepsilon}{K\sqrt{d}}\Big)\Big).\label{eq:conc_T11_uncond}
	\end{equation}
	The same argument gives
	\begin{equation}
		\P\Big(\big|T_{22}^{(\Pi)}(t)-G_{22,t}(K_{t})\big|\ge\varepsilon\Big)\le2\exp\Big(-c\,\Big\lfloor\frac{n-t}{2}\Big\rfloor\,\min\Big(\frac{\varepsilon^{2}}{K^{2}d},\frac{\varepsilon}{K\sqrt{d}}\Big)\Big).\label{eq:conc_T22_cond}
	\end{equation}
	\begin{equation}
		\P\Big(\big|T_{12}^{(\Pi)}(t)-G_{12,t}(K_{t})\big|\ge\varepsilon\Big)\le2\exp\Big(-c\,m_{t}\,\min\Big(\frac{\varepsilon^{2}}{K^{2}d},\frac{\varepsilon}{K\sqrt{d}}\Big)\Big),\label{eq:conc_T12_cond}
	\end{equation}
	where $m_{t}:=\min\{t,n-t\}$. 
	Define $N(k):=\binom{k}{2}\eta_{11}+\binom{t-k}{2}\eta_{22}+k(t-k)\eta_{12}$. Note that
	\begin{align}
		N(k+1)-N(k) & =k\eta_{11}-(t-k-1)\eta_{22}+(t-2k-1)\eta_{12}\nonumber \\
		& =k(\eta_{11}-\eta_{12})+(t-k-1)(\eta_{12}-\eta_{22}).\label{eq:diff_Nk}
	\end{align}
	Since $G_{11,t}(k)=N(k)/\binom{t}{2}$, for $t\ge2$, we have 
	\begin{align}
		|G_{11,t}(k+1)-G_{11,t}(k)| & =\frac{|N(k+1)-N(k)|}{\binom{t}{2}}\nonumber \\
		& \le\frac{k|\eta_{11}-\eta_{12}|+(t-k-1)|\eta_{12}-\eta_{22}|}{\binom{t}{2}}\nonumber \\
		& \le\frac{tM_{\mu}}{\binom{t}{2}}\le\frac{4M_{\mu}}{t}=:C\,\frac{M_{\mu}}{t},\label{eq:Lipschitz_G11}
	\end{align}
	where $M_{\mu}:=|\eta_{12}-\eta_{11}|+|\eta_{12}-\eta_{22}|$. 
	Now, we first extend $G_{11,t}(\cdot)$ from $\{0,1,\ldots,t\}$ to $[0,t]$ by linear
	interpolation. For any $0\le x \le t$ of the form $x=k+\delta$ (where $k$ is an integer and $0\le \delta<1$), define $G_{11,t}(x)=(x-k)G_{11,t}(k)+(k+1-x)G_{11,t}(k+1)$. Now,  one can prove the  Lipschitz bound
	$|G_{11,t}(x)-G_{11,t}(y)|\le C\frac{M_{\mu}}{t}|x-y|, \text{ for all } x,y\in[0,t]$, and hence
	\begin{equation}
		|G_{11,t}(K_{t})-G_{11,t}(t\tau/n)|\le C\frac{M_{\mu}}{t}|K_{t}-t\tau/n|=CM_{\mu}\Big|\frac{K_{t}}{t}-\frac{\tau}{n}\Big|.\label{eq:G11_Lip_to_K}
	\end{equation}
	Since $K_t$ is hypergeometric using Hoeffding-type concentration for sampling without replacement \citep[see, e.g.,][]{Hoeffding1963} for all $u>0$, we get
	\begin{equation}
		\P\Big(\Big|\frac{K_{t}}{t}-\frac{\tau}{n}\Big|>u\Big)\le2\exp(-2tu^{2}).
		\label{eq:hypergeom_Hoeffding}
	\end{equation}
	
	Combining \eqref{eq:G11_Lip_to_K} and \eqref{eq:hypergeom_Hoeffding},
	for all $\varepsilon>0$, we have
	\begin{align}
		P\big(|G_{11,t}(K_{t})- &G_{11,t}(t\tau/n)|>\varepsilon\big) \le P\Big(\Big|\frac{K_{t}}{t}-\frac{\tau}{n}\Big|>\frac{\varepsilon}{CM_{\mu}}\Big)\nonumber \\
		& \le2\exp\Big(-2t\,\frac{\varepsilon^{2}}{C^{2}M_{\mu}^{2}}\Big)\le2\exp\Big(-c\,t\,\frac{\varepsilon^{2}}{M_{\mu}^{2}}\Big),\label{eq:G11_fluct}
	\end{align}
	for an absolute constant $c>0$. 
	
	Using the same argument as above, for all $\varepsilon>0$, we have
	\begin{equation}
		P\big(|G_{22,t}(K_{t})-G_{22,t}(t\tau/n)|>\varepsilon\big)\le2\exp\Big(-c\,(n-t)\,\frac{\varepsilon^{2}}{M_{\mu}^{2}}\Big),\label{eq:G22_fluct}
	\end{equation}
	\begin{equation}
		P\big(|G_{12,t}(K_{t})-G_{12,t}(t\tau/n)|>\varepsilon\big)\le2\exp\Big(-c\,m_{t}\,\frac{\varepsilon^{2}}{M_{\mu}^{2}}\Big).\label{eq:G12_fluct}
	\end{equation}
	
	\noindent Next, consider the corresponding bias terms 
	$
	b_{11}(t):=|G_{11,t}(t\tau/n)-\bar{\mu}_{n}|$,  $b_{22}(t):=|G_{22,t}(t\tau/n)-\bar{\mu}_{n}|$ and $b_{12}(t):=|G_{12,t}(t\tau/n)-\bar{\mu}_{n}|.
	$ Define
	\noindent  %and recall $M_{\mu}:=|\eta_{12}-\mu_{11}|+|\eta_{12}-\eta_{22}|$.
	%\noindent For $G_{11,t}$ evaluated at $k=t\alpha$, define the mixing
	%weights
	\noindent
	\begin{align*}
		&q_{11}:={\binom{\tau}{2}}\Big/{\binom{n}{2}},~ q_{22}:={\binom{n-\tau}{2}}\Big/{\binom{n}{2}},~ q_{12}:={\tau(n-\tau)}\Big/{\binom{n}{2}}, \text{ and  }\\
		&p_{11}(t):={\binom{t\theta}{2}}\Big/{\binom{t}{2}},~ p_{22}(t):={\binom{t(1-\theta)}{2}}\Big/{\binom{t}{2}},~ p_{12}(t):={t^2\theta(1-\theta)}\Big/{\binom{t}{2}},
	\end{align*}
	for $\theta:=\tau/n\in(0,1)$.
	Note that
	\noindent
	\begin{align*}
		G_{11,t}(t\alpha)
		& =p_{11}(t)\eta_{11}+p_{22}(t)\eta_{22}+p_{12}(t)\eta_{12}\\
		& =\eta_{12}+p_{11}(t)(\eta_{11}-\eta_{12})+p_{22}(t)(\eta_{22}-\eta_{12}).
	\end{align*}
	
	\noindent Similarly, we have %$\eqref{eq:bar_mu_n_explicit}$, let
	%\noindent
	%\[
	%\]
	%\noindent so that 
	$\bar{\mu}_{n}=\eta_{12}+q_{11}(\eta_{11}-\eta_{12})+q_{22}(\eta_{22}-\eta_{12})$.
	\noindent Therefore,
	
	\noindent
	\[
	b_{11}(t)=|G_{11,t}(t\alpha)-\bar{\mu}_{n}|\le\big(|p_{11}(t)-q_{11}|+|p_{22}(t)-q_{22}|\big)\,M_{\mu}.
	\]
	
	%\noindent Using $\binom{x}{2}=x(x-1)/2$,
	
	%\noindent
	%\[
	%p_{11}(t)=\frac{t\alpha(t\alpha-1)}{t(t-1)}=\alpha^{2}+\frac{\alpha^{2}-\alpha}{t-1},\qquad q_{11}=\frac{n\alpha(n\alpha-1)}{n(n-1)}=\alpha^{2}+\frac{\alpha^{2}-\alpha}{n-1}.
	%\]
	
	\noindent Since $|\theta^{2}-\theta|=\theta(1-\theta)\le1/4$, one can show that
	\noindent
	\[
	|p_{11}(t)-q_{11}|=\Big|\frac{\theta^{2}-\theta}{t-1}-\frac{\theta^{2}-\theta}{n-1}\Big|\le C_0\Big(\frac{1}{t}+\frac{1}{n}\Big),
	\]
	for some $C_0>0$. 
	\noindent Similarly, we have $|p_{22}(t)-q_{22}|\le C_0\big(\frac{1}{t}+\frac{1}{n}\big)$.
	\noindent Consequently,
	
	\noindent
	\[
	b_{11}(t)\le C_0\,M_{\mu}\Big(\frac{1}{t}+\frac{1}{n}\Big).
	\]
	
	\noindent Similar calculations lead to 
	%for $G_{22,t}$ (with sample size $n-t$
	%in place of $t$)
	\noindent 
	\noindent
	\[
	b_{22}(t)\le C_0\,M_{\mu}\Big(\frac{1}{n-t}+\frac{1}{n}\Big) \text{  and } b_{12}(t)\le C_0\,\frac{M_{\mu}}{n}.
	\]
	
	%\noindent For $G_{12,t}$, plugging $k=t\alpha$ into $\eqref{eq:cond_mean_T12}$
	%gives the
	
	%\noindent deterministic expression
	
	%\noindent
	%\[
	%G_{12,t}(t\alpha)=\alpha^{2}\mu_{11}+(1-%\alpha)^{2}\eta_{22}+2\alpha(1-\alpha)\eta_{12},
	%\]
	
	%\noindent whereas $(q_{11},q_{22},q_{12})=(\alpha^{2},(1-\alpha)^{2},2\alpha(1-\alpha))+O(1/n)$
	%by the same
	
	%\noindent binomial-ratio comparison as above, hence
	
	%\noindent
	%\[
	%b_{12}(t)=|G_{12,t}(t\alpha)-\bar{\mu}_{n}|\le %C\,\frac{M_{\mu}}{n}.
	%\]
	
	\noindent Combining these three, we get the uniform bound
	\noindent
	\begin{align}
		\label{maximum-bias-bound}
		\max\{b_{11}(t),b_{22}(t),b_{12}(t)\}\le C_0\,M_{\mu}\Big(\frac{1}{t}+\frac{1}{n-t}+\frac{1}{n}\Big),    
	\end{align}
	
	\noindent If $t\in\mathcal{A}_n = \{\lfloor n\delta_n\rfloor, \ldots, \lceil n(1-\delta_n)\rceil\}$,
	then $t\ge n\delta_{n}$, $n-t\ge n\delta_{n}$ and
	\noindent
	\[
	\frac{1}{t}+\frac{1}{n-t}+\frac{1}{n}\le\frac{C_0}{n\delta_{n}}.
	\]
	\noindent Hence, taking supremum over the set $\mathcal{A}_{n}$, one gets
	\noindent
	\[
	\sup_{t\in\mathcal{A}_{n}}\max\{b_{11}(t),b_{22}(t),b_{12}(t)\}\le\frac{C_0 M_{\mu}}{n\delta_{n}}=:b_{n}.
	\]
	For any $\varepsilon>0$, take $n$ large so that $b_{n}\le\varepsilon/3$.
	Using the decomposition 
	\begin{equation*}
		|T_{11}^{(\Pi)}(t)-\bar{\mu}_{n}|\le|T_{11}^{(\Pi)}(t)-G_{11,t}(K_{t})|+|G_{11,t}(K_{t})-G_{11,t}(t\tau/n)|+b_{11}(t),\label{eq:decomp_T11}
	\end{equation*}
	and applying \eqref{eq:conc_T11_uncond} and \eqref{eq:G11_fluct}, for all $t\in\mathcal{A}_{n}$,
	we obtain  
	\begin{align}
		& \P\big(|T_{11}^{(\Pi)}(t)-\bar{\mu}_{n}|>\varepsilon\big)\le\label{eq:T11_final_conc} \nonumber \\
		& 2\exp\Big(-c\,\Big\lfloor\frac{t}{2}\Big\rfloor\,\min\Big(\frac{\varepsilon^{2}}{9K^{2}d},\frac{\varepsilon}{3K\sqrt{d}}\Big)\Big)+2\exp\Big(-c\,t\,\frac{\varepsilon^{2}}{M_{\mu}^{2}}\Big).
	\end{align}
	Similarly, we have
	\begin{align}
		& \P\big(|T_{22}^{(\Pi)}(t)-\bar{\mu}_{n}|>\varepsilon\big)\le\label{eq:T22_final_conc} \nonumber \\
		& 2\exp\Big(\hspace{-0.08in}-c\,\Big\lfloor\frac{n-t}{2}\Big\rfloor\min\Big(\frac{\varepsilon^{2}}{9K^{2}d},\frac{\varepsilon}{3K\sqrt{d}}\Big)\Big)+2\exp\Big(\hspace{-0.08in}-c\,(n-t)\,\frac{\varepsilon^{2}}{M_{\mu}^{2}}\Big),\\
		& \P\big(|T_{12}^{(\Pi)}(t)-\bar{\mu}_{n}|>\varepsilon\big)\le \nonumber \\
		& 2\exp\Big(-c\,m_{t}\,\min\Big(\frac{\varepsilon^{2}}{9K^{2}d},\frac{\varepsilon}{3K\sqrt{d}}\Big)\Big)+2\exp\Big(-c\,m_{t}\,\frac{\varepsilon^{2}}{M_{\mu}^{2}}\Big).
	\end{align}
	Now, for $r,s=1,2$, define $\Delta_{rs}(t):=T_{rs}^{(\Pi)}(t)-\bar{\mu}_{n}$.
	Then, we have 
	\begin{equation*}
		T_{12}^{(\Pi)}(t)-T_{11}^{(\Pi)}(t)=\Delta_{12}(t)-\Delta_{11}(t), T_{12}^{(\Pi)}(t)-T_{22}^{(\Pi)}(t)=\Delta_{12}(t)-\Delta_{22}(t).
	\end{equation*}
	%0.1em}
	This implies 
	%\begin{equation}
	$D^{(\Pi)}(t)\le4\big(\Delta_{11}(t)^{2}+\Delta_{12}(t)^{2}+\Delta_{22}(t)^{2}\big). \label{eq:D_upper_by_deltas}
	$%\end{equation}
	(follows from the fact that $(a-b)^{2}\le2a^{2}+2b^{2}$). Hence for $\varepsilon>0$ with $\varepsilon_{*}:=\sqrt{\varepsilon/12}$,
	\begin{equation}
		\{D^{(\Pi)}(t)>\varepsilon\}\subseteq\bigcup_{rs\in\{11,12,22\}}\big\{|\Delta_{rs}(t)|>\varepsilon_{*}\big\}.\label{eq:D_event_subset}
	\end{equation}
	Since $m_t=\min\{t,n-t\}$, combining \eqref{eq:D_event_subset} with \eqref{eq:T11_final_conc}--\eqref{eq:T22_final_conc}
	for all $t\in\mathcal{A}_{n}$ and all large $n$, we have 
	\begin{align}
		&\P\big(D^{(\Pi)}(t)>\varepsilon\big) \nonumber\\
		&~~~~\le C\exp\!\Big(-c\,m_{t}\,\min\Big(\frac{\varepsilon_{*}^{2}}{9K^{2}d},\frac{\varepsilon_{*}}{3K\sqrt{d}}\Big)\Big)
		+
		C\exp\!\Big(-c\,m_{t}\,\frac{\varepsilon_{*}^{2}}{M_{\mu}^{2}}\Big),\label{eq:D_tail_final}
	\end{align}
	
	Since $w(t)\le1/4$, applying union-bound, we have 
	\begin{equation}
		\P\Big(\max_{t\in\mathcal{A}_{n}}w(t)D^{(\Pi)}(t)>\varepsilon\Big)
		\le\sum_{t\in\mathcal{A}_{n}}\P\big(D^{(\Pi)}(t)>4\varepsilon\big).\label{eq:max_union_bound}
	\end{equation}
	
	Now set $K_{*}:=3\sqrt{3}\,K.$ and from \eqref{eq:D_tail_final} with $\varepsilon$ replaced by $4\varepsilon$ (note that
	$(4\varepsilon)_{*}=\sqrt{(4\varepsilon)/12}=\sqrt{\varepsilon/3}$), we obtain
	\begin{equation*}
		\P\big(D^{(\Pi)}(t)>4\varepsilon\big)\le
		C\exp\!\Big(-c\,m_{t}\,\min\Big(\frac{\varepsilon}{K_{*}^{2}d},\frac{\sqrt{\varepsilon}}{K_{*}\sqrt{d}}\Big)\Big)
		+
		C\exp\!\Big(-c\,m_{t}\,\frac{\varepsilon}{3M_{\mu}^{2}}\Big).
	\end{equation*}
	Let $m_{*}:=\min_{t\in\mathcal{A}_{n}}m_{t}\asymp n\delta_{n}$ and
	$|\mathcal{A}_{n}|\asymp n$ 
	(here $a_n \asymp b_n$ means $a_n$ and $b_n$ are of the same asymptotic order). Then, for all large $n$,
	\begin{align}
		\P\Big(\max_{t\in\mathcal{A}_{n}}w(t)D^{(\Pi)}(t)>\varepsilon\Big)
		&\le
		C|\mathcal{A}_{n}|\exp\!\Big(-c\,m_{*}\,\min\Big(\frac{\varepsilon}{K_{*}^{2}d},\frac{\sqrt{\varepsilon}}{K_{*}\sqrt{d}}\Big)\Big)\nonumber \\
		&\quad+
		C|\mathcal{A}_{n}|\exp\!\Big(-c\,m_{*}\,\frac{\varepsilon}{3M_{\mu}^{2}}\Big).\label{eq:max_bound_final}
	\end{align}
	
	Since %we have taken $\delta_n=\frac{1}{\sqrt n}$ and $d$ fixed, we have 
	$n\delta_{n}\to\infty$ and $m_{*}\,\min\Big(\frac{\varepsilon}{K_{*}^{2}d},\frac{\sqrt{\varepsilon}}{K_{*}\sqrt{d}}\Big)/\log n \asymp n\delta_n/\log n\rightarrow \infty$ as $n \rightarrow \infty$, for every fixed $\varepsilon>0$, the right-hand side of \eqref{eq:max_bound_final}
	tends to $0$ as $n$ diverges to infinity. Therefore, 
	$S_{n}^{(\Pi)}:=\max_{t\in\mathcal{A}_{n}}w(t)D^{(\Pi)}(t)\xrightarrow{\P}0.\label{eq:SnPi_to0}
	$
	Consequently, for any fixed $\alpha\in(0,1)$, the permutation $(1-\alpha)$-quantile
	$\widehat c_{1-\alpha}$ converges to $0$ in probability.
	% Consequently, $\hat{c}_{1-\alpha}(n)$, the cut-off permutation $(1-\alpha)$-quantile cutoff 
	% satisfies \\ $\hat{c}_{1-\alpha}(n)\xrightarrow{\P}0$.  
\end{proof}

% Lemma A.4 (version of Lemma A.3 for generalized distance based test statistic)
\begin{applem}
	\label{lem:perm_conc_BG_gen}
	Let $\Zvec_{1},\ldots,\Zvec_{\tau}\stackrel{iid}{\sim}\F_{1}$ and 
	$\Zvec_{\tau+1},\ldots,\Zvec_{n}\stackrel{iid}{\sim}\F_{2}$, where
	$\tau=\lfloor n\theta\rfloor,\ \theta\in(0,1)$. Let $\Pi$ be a uniform random
	permutation of $\{1,\ldots,n\}$, independent of $\{\Zvec_i\}_{i=1}^n$. For
	$t\in\{2,\ldots,n-2\}$, let $T^{h,\psi,(\Pi)}_{rs}(t)$ denote the permuted
	version of $T^{h,\psi}_{rs}(t)$ defined in sub-section \ref{sec:3.1} with bounded
	distance function $\varphi_{h,\psi}$ satisfying
	$\|\varphi_{h,\psi}\|_\infty\le L$, $w(t):=t(n-t)/n^2$, and
	\[
	D^{h,\psi,(\Pi)}(t):=\{T^{h,\psi,(\Pi)}_{12}(t)-T^{h,\psi,(\Pi)}_{11}(t)\}^{2}
	+\{T^{h,\psi,(\Pi)}_{12}(t)-T^{h,\psi,(\Pi)}_{22}(t)\}^{2}.
	\]
	Since $\mathcal{A}_n = \{\lfloor n\delta_n\rfloor, \ldots, \lceil n(1-\delta_n)\rceil\}$
	where $n\delta_n/\log n\to\infty$. Then, as $n\to\infty$,
	$\widehat{S}_n^{h,\psi,(\Pi)}:=\max_{t\in\mathcal{A}_n}w(t)D^{h,\psi,(\Pi)}(t)
	\xrightarrow{\P}0$, and the corresponding permutation cutoff
	$\hat c_{1-\alpha}(n)\xrightarrow{\P}0$.
\end{applem}

\begin{proof}
	The structure of the argument is identical to the proof of
	Lemma~A.\ref{lem:perm_stat_vanish_BG}; we only indicate the difference. Define $\eta_{rs}^{h,\psi}$, $\bar\mu_n^{h,\psi}$, $K_t$, $a,b,c,d$,
	and the conditional means $G^{h,\psi}_{rs,t}(K_t)$ exactly as in
	\eqref{eq:mu11mu22mu12}--\eqref{eq:cond_mean_T12} with
	$\|\Zvec_i-\Zvec_j\|$ replaced by $\varphi_{h,\psi}(\Zvec_i,\Zvec_j)$;
	\eqref{eq:uncond_means_equal} continues to hold by exchangeability.
	
	The only step in Lemma~A.\ref{lem:perm_stat_vanish_BG} that uses the
	sub-exponential tail \eqref{eq:pair_tail} is the concentration of
	$T^{(\Pi)}_{rs}(t)$ around $G_{rs,t}(K_t)$, and this is the only step where boundedness of $\varphi_{h,\psi}$ enters. 
	
	Conditional on $\Pi$, each of $T^{h,\psi,(\Pi)}_{11}(t)$, $T^{h,\psi,(\Pi)}_{22}(t)$, $T^{h,\psi,(\Pi)}_{12}(t)$ is a U-statistic with bounded kernel $|\varphi_{h,\psi}|\le L$. Since, $T^{h,\psi,(\Pi)}_{11}(t)$ averages over $\binom{t}{2}$ pairs from $\Zvec_{\Pi(1)},\ldots,\Zvec_{\Pi(t)}$; replacing $\Zvec_{\Pi(k)}$ ($k\le t$) by an independent copy perturbs exactly $t-1$ kernel terms each by at most $L/\binom{t}{2}$. So, the total deviation in $T^{h,\psi,(\Pi)}_{11}(t)$ remains bounded by $c^{(11)}_{k,t}=2L/t$. Note that this replacement of $\Zvec_{\Pi(t)}$ does not disturb $T^{h,\psi,(\Pi)}_{22}(t)$ (take $c^{(22)}_{t,k}=0)$, but $T^{h,\psi,(\Pi)}_{12}(t)$, which is an average over $t(n-t)$ cross-pairs, can deviate at most by $c_{t,k}^{(12)}=L/t$. Similarly, for $k>t$ replacement of $\Zvec_{\Pi(k)}$ by an independent term, the deviations in  $T^{h,\psi,(\Pi)}_{11}(t)$, $T^{h,\psi,(\Pi)}_{22}(t)$ and $T^{h,\psi,(\Pi)}_{12}(t)$ remain bounded by $c_{t,k}^{(11)}=0,c_{t,k}^{(22)}=2L/(n-t)$ and $c_{t,k}^{(12)}=L/(n-t)$, respectively.
	So, here we have $\sum_{k=1}^{n}(c^{(11)}_{k,t})^2\le 4L^2/m_t$, $\sum_{k=1}^{n}(c^{(22)}_{k,t})^2\le 4L^2/m_t$ and $\sum_{k=1}^{n}\!\bigl(c^{(12)}_{k,t}\bigr)^2
	\;\le\;\frac{2L^2}{m_t}$, where $m_t=\min\{t,n-t\}$. Therefore, using McDiarmid's inequality, conditional on $\Pi$, we have
	% By the identical argument on the $(n-t)$-block, $c^{(22)}_{k,t}\le 2L/(n-t)$ and $\sum_{k=1}^{n-t}(c^{(22)}_{k,t})^2\le 4L^2/(n-t)$.
	% Now the third term $T^{h,\psi,(\Pi)}_{12}(t)$ averages over $t(n-t)$ cross-pairs. Replacing $\Zvec_{\Pi(k)}$ with $k\le t$ disturbs all $n-t$ cross-terms involving it, each by $\le L/[t(n-t)]$, so $c^{(12)}_{k,t}\le L/t$; symmetrically $c^{(12)}_{k,t}\le L/(n-t)$ for $k>t$. Hence $  \sum_{k=1}^{n}\!\bigl(c^{(12)}_{k,t}\bigr)^2
	%   \;\le\; t\cdot\frac{L^2}{t^2}+(n-t)\cdot\frac{L^2}{(n-t)^2}
	%   \;=\;\frac{L^2}{t}+\frac{L^2}{n-t}
	%   \;\le\;\frac{2L^2}{m_t},\
	%$ where $m_t=\min\{t,n-t\}$. 
	% Therefore, by using McDiarmid's inequality with the above variance factors, yields, conditional on $\Pi$,
	\begin{align}
		\P\!\bigl(|T^{h,\psi,(\Pi)}_{rs}(t)-G^{h,\psi}_{rs,t}(K_t)|>\varepsilon\mid\Pi\bigr)
		&\;\le\; 2\exp\!\bigl(-c\,m_t\,\varepsilon^2/L^2\bigr),\label{eq:conc_Trs_BG}
	\end{align}
	Since this bound is free of $\Pi$, it holds unconditionally as well.
	Since $|\eta_{rs}^{h,\psi}|\le L$, repeating the calculations
	in \eqref{eq:Lipschitz_G11}--\eqref{eq:G11_fluct} with
	$M_\mu$ replaced by $M_\mu^{h,\psi}:=|\mu_{12}^{h,\psi}-\mu_{11}^{h,\psi}|
	+|\mu_{12}^{h,\psi}-\mu_{22}^{h,\psi}|$, one gets
	\begin{equation}\label{eq:G_fluct_BG}
		\P\!\big(|G^{h,\psi}_{rs,t}(K_t)-G^{h,\psi}_{rs,t}(t\tau/n)|>\varepsilon\big)
		\;\le\; 2\exp\!\big(-c\,m_t\,\varepsilon^2/(M_\mu^{h,\psi})^2\big).
	\end{equation}
	Repeating the same argument as in \eqref{maximum-bias-bound}, for $r,s\in\{1,2\}$, we have the following bound for the bias $b_{rs}^{h,\psi}(t):=|G_{rs,t}^{h,\psi}(t\tau/n)-\bar{\mu}_{n}^{h,\psi}|$
	\[
	b_{rs}^{h,\psi}(t)
	\;\le\;
	C M_\mu^{h,\psi}\!\left(\frac{1}{t}+\frac{1}{n-t}+\frac{1}{n}\right),
	\]
	where $C$ is a constant. For $t\in \mathcal{A}_n$, since $t\ge n\delta_n$ and $n-t\ge n\delta_n$, we have
	\begin{align}
		\sup_{t\in\mathcal{A}_n} b_{rs}^{h,\psi}(t)
		\;=\;
		O\!\left({M_\mu^{h,\psi}}/{n\delta_n}\right)
		\;=\; o(1). \label{bias_negligence}    
	\end{align}
	%Fix $t\in\mathcal{A}_n$ and $(r,s)$. Set 
	Now, for $\Delta_{rs}^{h,\psi}(t):=T^{h,\psi,(\Pi)}_{rs}(t)-\bar{\mu}_{n}^{h,\psi}$, the triangle inequality gives
	\[
	\bigl|\Delta_{rs}^{h,\psi}(t)\bigr|
	\;\le\;
	\bigl|T^{h,\psi,(\Pi)}_{rs}(t)-G^{h,\psi}_{rs,t}(K_t)\bigr|
	+
	\bigl|G^{h,\psi}_{rs,t}(K_t)-G^{h,\psi}_{rs,t}(t\tau/n)\bigr|
	+
	b_{rs}^{h,\psi}(t).
	\]
	From \eqref{eq:conc_Trs_BG}, it follows that  
	$P(\bigl|T^{h,\psi,(\Pi)}_{rs}(t)-G^{h,\psi}_{rs,t}(K_t)\bigr|>\varepsilon)$ is bounded by
	$2\exp(-c\,m_t\,\varepsilon^2/L^2)$. From 
	\eqref{eq:G_fluct_BG}, we get
	$P(\bigl|G^{h,\psi}_{rs,t}(K_t)-G^{h,\psi}_{rs,t}(t\tau/n)\bigr|>\varepsilon)<2\exp\!\big(-c\,m_t\,\varepsilon^2/(M_\mu^{h,\psi})^2\big)$. From \eqref{bias_negligence}, we get $b_{rs}^{h,\psi}(t)=o(1)$ uniformly on $\mathcal{A}_n$.
	Using these results, for every $\varepsilon>0$ and all large $n$, we have
	$$  \P\!\Bigl(|\Delta_{rs}^{h,\psi}(t)|>\varepsilon\Bigr)
	\;\le\;
	C_1\exp\!\Bigl(-c\,m_t\,\varepsilon^2/L^2\Bigr)
	+C_1\exp\!\Bigl(-c\,m_t\,\varepsilon^2/(M_\mu^{h,\psi})^2\Bigr), 
	$$ where $C_1>0$ is a constant. Since $T^{h,\psi,(\Pi)}_{rs}(t)=\Delta^{h,\psi}_{rs}(t)+\bar{\mu}_n^{h,\psi}$,
	we have $T^{h,\psi,(\Pi)}_{\ell\ell}(t)-T^{h,\psi,(\Pi)}_{12}(t)
	\;=\;
	\Delta^{h,\psi}_{\ell\ell}(t)-\Delta^{h,\psi}_{12}(t),$ for $\ell=1,2$. %Applying $(a-b)^2\le 2a^2+2b^2$ to each squared difference and summing, 
	Now,
	\begin{align}
		& D^{h,\psi,(\Pi)}(t)
		\;=\;
		\bigl(\Delta^{h,\psi}_{11}(t)-\Delta^{h,\psi}_{12}(t)\bigr)^2
		+\bigl(\Delta^{h,\psi}_{22}(t)-\Delta^{h,\psi}_{12}(t)\bigr)^2 \nonumber\\
		&\;\le\;
		2\bigl(\Delta^{h,\psi}_{11}(t)\bigr)^2+2\bigl(\Delta^{h,\psi}_{12}(t)\bigr)^2
		+2\bigl(\Delta^{h,\psi}_{22}(t)\bigr)^2+2\bigl(\Delta^{h,\psi}_{12}(t)\bigr)^2 \nonumber\\
		&\;=\;
		2\bigl(\Delta^{h,\psi}_{11}(t)\bigr)^2
		+4\bigl(\Delta^{h,\psi}_{12}(t)\bigr)^2
		+2\bigl(\Delta^{h,\psi}_{22}(t)\bigr)^2
		\;\le\;
		4\sum_{r,s\in\{1,2\}}\bigl(\Delta^{h,\psi}_{rs}(t)\bigr)^2. \nonumber
	\end{align}
	Since $w(t)\le 1/4$, we have $w(t)\,D^{h,\psi,(\Pi)}(t)\le \sum_{r,s\in\{1,2\}}(\Delta^{h,\psi}_{rs}(t))^2$,
	and hence $\{w(t)\,D^{h,\psi,(\Pi)}(t)>\varepsilon\}$ implies
	$\{|\Delta^{h,\psi}_{rs}(t)|>\varepsilon^{1/2}/2\}$ for some $(r,s)$.
	A union bound over $r,s\in\{1,2\}$ therefore gives
	\begin{align}
		\P\!\Bigl(w(t)\,D^{h,\psi,(\Pi)}(t)>\varepsilon\Bigr)
		&\;\le\;
		\sum_{r,s\in\{1,2\}}\P\!\Bigl(|\Delta_{rs}^{h,\psi}(t)|>\varepsilon^{1/2}/2\Bigr) \nonumber\\
		& \hspace{-1in}\;\le\;
		C_1\exp\!\Bigl(-c_0\,m_t\,\varepsilon/L^2\Bigr)
		+C_1\exp\!\Bigl(-c_0\,m_t\,\varepsilon/(M_\mu^{h,\psi})^2\Bigr), \label{eq:Delta_sq_bound}
	\end{align}
	for some constants $C_1$ and $c_0$. %absorb the factor from $\varepsilon^{1/2}/2$.
	Again, applying a union bound over all $t\in\mathcal{A}_n$ in \eqref{eq:Delta_sq_bound}
	and using the fact $m_t\ge m_*:=\min_{t\in\mathcal{A}_n}m_t\asymp n\delta_n$, we get
	\begin{align}
		&\P\!\left(\max_{t\in\mathcal{A}_n}w(t)\,D^{h,\psi,(\Pi)}(t)>\varepsilon\right)
		\;\le\; \nonumber\\ &
		C_1|\mathcal{A}_n|\exp\!\left(-c_0\,m_*\,\frac{\varepsilon}{L^2}\right)
		+C_1|\mathcal{A}_n|\exp\!\left(-c_0\,m_*\,\frac{\varepsilon}{(M_\mu^{h,\psi})^2}\right). \label{eq:max_bound_BG}
	\end{align}
	Since $|\mathcal{A}_n|\asymp n$ and $m_*\asymp n\delta_n$ with $n\delta_n/\log n\to\infty$,
	both terms on the right of \eqref{eq:max_bound_BG} tend to $0$ for every fixed $\varepsilon>0$.
	Hence $\widehat{S}_n^{h,\psi,(\Pi)}=\max_{t\in\mathcal{A}_n}w(t)\,D^{h,\psi,(\Pi)}(t)\xrightarrow{\P}0$, and the standard Markov argument
	as in Lemma~\ref{lem:perm_stat_vanish_BG} gives $\hat{c}_{1-\alpha}(n)\xrightarrow{\P}0$.
\end{proof}

\begin{applem}
	\label{Uniform Convergence LST}
	Suppose $\Zvec_1,\ldots, \Zvec_{\lfloor n\theta \rfloor}\stackrel{iid}{\sim}F_1$ and
	$\Zvec_{\lfloor n\theta \rfloor +1},\ldots, \Zvec_{n}\stackrel{iid}{\sim}F_2$ where
	$\theta \in(0,1)$. For $\Zvec\sim\F_{r}$ and $\Zvec_{*}\sim\F_{s}$ ($r,s=1,2$), define
	$\mu_{rs}=\E\|\Zvec-\Zvec_{*}\|$. Define the U-statistic $U_n:=\binom{n}{2}^{-1}\sum_{1\le i<j\le n} d^{-1/2}\,\|\Zvec_i-\Zvec_j\|.$ If $\delta_n \rightarrow 0$ and $n \delta_n \rightarrow \infty$ as $n \rightarrow \infty$, then
	\begin{equation}
		\sup_{\theta\in[\delta_n,\,1-\delta_n]}
		\Big|\,U_n-\big[\theta^{2}\eta_{11}+(1-\theta)^{2}\eta_{22}+2\theta(1-\theta)\eta_{12}\big]\Big|
		\ \xrightarrow{a.s.}\ 0.
		\label{eq:Uniform-conv}
	\end{equation}
\end{applem}

\begin{proof} %Let us define three disjoint sets 
	%$ \Pi_{11}=\left\{ \left(i,j\right):i<j,\Zvec_{i},\Zvec_{j}\sim\F_{1}\right\}$, $\Pi_{12}=\left\{ (i,j):i<j,\Zvec_{i}\sim\F_{1},\Zvec_{j}\sim\F_{2}\right\}$ and $\Pi_{22}=\left\{ \left(i,j\right):i<j,\Zvec_{i},\Zvec_{j}\sim\F_{2}\right\}.$
	Note that $\sum_{1\leq i<j\leq n}d^{-1/2}\norm{\Zvec_{i}-\Zvec_{j}}$ can be written 
	as 
	\begin{align*}
		& \sum_{1 \le i<j\leq n}d^{-1/2}\norm{\Zvec_{i}-\Zvec_{j}}\\
		& =\left[\sum_{1\le i<j\leq \lfloor n\theta\rfloor}\hspace{-0.15in}d^{-1/2}\norm{\Zvec_{i}-\Zvec_{j}}+\hspace{-0.15in}\sum_{\lfloor n\theta\rfloor<i<j\le n}\hspace{-0.15in}d^{-1/2}\norm{\Zvec_{i}-\Zvec_{j}}+\hspace{-0.15in}\sum_{i\le \lfloor n\theta \rfloor <j}\hspace{-0.15in}d^{-1/2}\norm{\Zvec_{i}-\Zvec_{j}}\right].
	\end{align*}
	Since $\theta>\delta_n$ and $n\delta_n \rightarrow \infty$ as $n \rightarrow \infty$, we  have $\lfloor n \theta\rfloor \rightarrow \infty$ as $n \rightarrow \infty$. Now, using almost sure convergence of U-statistics \citep{VladimirUstat}, $\forall\,\,\epsilon,\delta>0$,
	we get  $N\left(\epsilon,\delta\right)\in\mathbb{N}\,$ such that
	\[
	\mathbb{P}\left[\left|\binom{\lfloor n \theta \rfloor}{2}^{-1}\hspace{-0.2in}\sum_{1\leq i<j\leq \lfloor n \theta \rfloor} \hspace{-0.15in}d^{-1/2}\norm{\Zvec_{i}-\Zvec_{j}}-\eta_{11}\right|<\epsilon\,\,\,\forall\,n\geq N\left(\epsilon,\delta\right)\right]\geq1-\delta.
	\]
	This implies %that for all $n\geq N\left(\epsilon,\delta\right)$ 
	%\[
	%\left|{\lfloor n \theta \rfloor \choose 2}^{-1}\binom{n}{2}\binom{n}{2}^{-1}\sum_{1\leq i<j\leq \lfloor n \theta \rfloor}d^{-1/2}\norm{\Zvec_{i}-\Zvec_{j}}-\mu_{11}\right|<\epsilon\,\,\,
	%\]
	%holds with probability at least $1-\delta$. %This implies when $K=\floor{n\theta}$
	%(since due to Assumption (A1) $\Zvec_{i}\sim\F_{1}$ for $i\leq\floor{n\theta}$)
	\[
	\left|\binom{n}{2}^{-1}\hspace{-0.2in}\sum_{1\leq i<j\leq\floor{n\theta}}d^{-1/2}\norm{\Zvec_{i}-\Zvec_{j}}-{\floor{n\theta} \choose 2}\binom{n}{2}^{-1}\eta_{11}\right|<{\floor{n\theta} \choose 2}\binom{n}{2}^{-1}\epsilon%\,\,\,\forall\,\floor{n\theta}\geq N\left(\epsilon,\delta\right)
	\]
	holds for all $n\geq N\left(\epsilon,\delta\right)$ with probability at least $1-\delta$. This further implies
	\begin{align*}
		& \left|\binom{n}{2}^{-1}\hspace{-0.15in}\sum_{1\leq i<j\leq\floor{n\theta}}d^{-1/2}\norm{\Zvec_{i}-\Zvec_{j}}-\theta^{2}\eta_{11}\right|\\
		& \le \left|\binom{n}{2}^{-1}\hspace{-0.2in}\sum_{1\leq i<j\leq\floor{n\theta}}\hspace{-0.15in}d^{-1/2}\norm{\Zvec_{i}-\Zvec_{j}}-{\floor{n\theta} \choose 2}\binom{n}{2}^{-1}\hspace{-0.15in}\eta_{11}\right|+\left|{\floor{n\theta} \choose 2}\binom{n}{2}^{-1}\hspace{-0.15in}-\theta^{2}\right|\eta_{11}\\
		& <{\floor{n\theta} \choose 2}\binom{n}{2}^{-1}\epsilon+\left|{\floor{n\theta} \choose 2}\binom{n}{2}^{-1}-\theta^{2}\right|\eta_{11}\,\,\,\forall\,{n}\geq N\left(\epsilon,\delta\right)
	\end{align*}
	with probability at least $1-\delta$. %Now, for given $\epsilon>0$
	%and $\delta>0$ if we choose $N\left(\epsilon,\delta\right)$ such
	%that the above holds for all $\theta>\delta_{n}$ which is possible
	%by choosing $\floor{n\delta_{n}}>N\left(\epsilon,\delta\right)$ as
	%that implies $\floor{n\theta}>N\left(\epsilon,\delta\right)\,\,\forall\,\theta>\delta_{n}$, 
	%Now$,
	%\begin{align*}
	%& \left|\binom{n}{2}^{-1}\sum_{1\leq i<j\leq\floor{n\theta}}\frac{1}{\sqrt{d}}\norm{\Zvec_{i}-\Zvec_{j}}-\theta^{2}\mu_{11}\right|\\
	%& <\theta^{2}\epsilon+\left\{ {\floor{n\theta} \choose 2}\binom{n}{2}^{-1}-\theta^{2}\right\} \epsilon+\left|{\floor{n\theta} \choose 2}\binom{n}{2}^{-1}-\theta^{2}\right|\mu_{11}
	%\end{align*}
	Therefore, we have 
	\begin{align*}
		& \sup_{\theta\in\left[\delta_{n},1-\delta_{n}\right]}\left|\binom{n}{2}^{-1}\sum_{1\leq i<j\leq\floor{n\theta}}d^{-1/2}\norm{\Zvec_{i}-\Zvec_{j}}-\theta^{2}\eta_{11}\right|\\
		& <\sup_{\theta\in\left[\delta_{n},1-\delta_{n}\right]}\left[\theta^{2}\epsilon+\left\{ {\floor{n\theta} \choose 2}\binom{n}{2}^{-1}-\theta^{2}\right\} \epsilon+\left|{\floor{n\theta} \choose 2}\binom{n}{2}^{-1}-\theta^{2}\right|\eta_{11}\right]\\
		& <\epsilon+{\sup_{\theta\in\left[\delta_{n},1-\delta_{n}\right]}\left|{\floor{n\theta} \choose 2}\binom{n}{2}^{-1}-\theta^{2}\right|(\epsilon+\eta_{11})}
	\end{align*}
	for all $n>N\left(\epsilon,\delta\right)$ with probability at least $1-\delta$. It is easy to show that \\ 
	$\sup\limits_{\theta\in\left[\delta_{n},1-\delta_{n}\right]}\left| {\floor{n\theta} \choose 2}\binom{n}{2}^{-1}-\theta^{2}\right| =o(1)$. 
	%\begin{align*}
	%\sup\limits_{\theta\in\left[\delta_{n},1-\delta_{n}\right]}\left\{ {\floor{n\theta} \choose 2}\binom{n}{2}^{-1}-\theta^{2}\right\}  & =\sup\limits_{\gamma\in\left[\delta_{T},1-\delta_{T}\right]}\left\{ \frac{\floor{n\theta}^{2}-\floor{n\theta}}{n\left(n-1\right)}-\theta^{2}\right\} =o_{n\to\infty}\left(1\right)
	%\end{align*}
	Hence, we have  
	\[
	\sup_{\theta\in\left[\delta_{n},1-\delta_{n}\right]}\left|\binom{n}{2}^{-1}\sum_{1\leq i<j\leq\floor{n\theta}}d^{-1/2}\norm{\Zvec_{i}-\Zvec_{j}}-\theta^{2}\eta_{11}\right|\overset{a.s.}{\longrightarrow}0.
	\]
	
	Similarly, we can show that  
	\[
	\sup_{\theta\in\left[\delta_{n},1-\delta_{n}\right]}\left|\binom{n}{2}^{-1}\sum_{\floor{n\theta}<i<j\le n}d^{-1/2}\norm{\Zvec_{i}-\Zvec_{j}}-(1-\theta)^{2}\eta_{22}\right|\overset{a.s.}{\longrightarrow}0 \text{ as } n \rightarrow \infty.
	\]
	Now, using the result on almost sure convergence for generalised (two-sample) U-statistic  \citep[see e.g.][]{VladimirUstat}  and following similar arguments as above, we get %the uniform almost sure convergence of $\binom{n}{2}^{-1}\sum\limits_{i \le\floor{n\theta}< j} d^{-1/2}\norm{\Zvec_{i}-\Zvec_{j}}$ as
	\[
	\sup_{\theta\in\left[\delta_{n},1-\delta_{n}\right]}\left|\binom{n}{2}^{-1}\sum_{i\le\floor{n\theta}<j}d^{-1/2}\norm{\Zvec_{i}-\Zvec_{j}}-2\theta(1-\theta)\eta_{12}\right|\overset{a.s.}{\longrightarrow}0 \text{ as } n \rightarrow \infty.
	\]
	%\ref{eq:Uniform-conv}.
	Combining  the results on the three components of $U_n$, the proof is obtained.
\end{proof}

% Uniform convergence of Proposed statistic around its population analog

\begin{applem}
	\label{Uniform Convergence LST-2} Under the  assumptions stated in
	Lemma A.\ref{Uniform Convergence LST},
	\[
	\sup_{\tilde{\theta}\in\left[\delta_{n},1-\delta_{n}\right]}\left|\frac{\floor{n\tilde{\theta}}\left(n-\floor{n\tilde{\theta}}\right)}{n^{2}}\mathcal{D}_{n}\left(\mathcal{X}_{\floor{n\tilde{\theta}}},\mathcal{Y}_{\floor{n\tilde{\theta}}}\right)-\varrho\left(\theta,\tilde{\theta}\right)\right|\stackrel{a.s.}{\rightarrow}0
	\]
	as $n$ tends to infinity, where %the function $\varrho\left(\theta,\tilde{\theta}\right)$ 
	% , and it is 
	%defined as 
	$\varrho\left(\theta,\tilde{\theta}\right)  =\tilde{\theta}\left(1-\tilde{\theta}\right) \big\{A_{\theta}({\tilde \theta}) ~\mathbb{I}_{\tilde{\theta}\leq\theta} + B_{\theta}({\tilde \theta}) ~\mathbb{I}_{\tilde{\theta}>\theta}\big\},$ with ${\mathbb I}_{(\cdot)}$  being the indicator function, and $A_{\theta}({\tilde \theta})$ and $B_{\theta}({\tilde \theta})$ given by
	\begin{align*}
		A_{\theta}({\tilde{\theta}})
		& =\left(\frac{1-\theta}{1-\tilde{\theta}}\right)^{2}\left\{ \left(\eta_{12}-\eta_{11}\right)^{2}+\left(\frac{\theta-\tilde{\theta}}{1-\tilde{\theta}}\left(\eta_{11}-\eta_{12}\right)+\frac{1-\theta}{1-\tilde{\theta}}\left(\eta_{12}-\eta_{22}\right)\right)^{2}\right\},\\
		%\end{align*}
		%\begin{align*}
		B_{\theta}({\tilde{\theta}})
		&=\left(\frac{\theta^{2}}{\tilde{\theta}^{2}}\right)\left\{ \left(\left(1-\frac{\theta}{\tilde{\theta}}\right)\left(\eta_{22}-\eta_{12}\right)+\frac{\theta}{\tilde{\theta}}\left(\eta_{12}-\eta_{11}\right)\right)^{2}+\left(\eta_{12}-\eta_{22}\right)^{2}\right\} .
	\end{align*}
	%Note that $\varrho\left(\theta,\tilde{\theta}\right)$  attains its global maxima at ${\tilde \theta}=\theta$.
\end{applem}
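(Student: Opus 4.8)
The plan is to reduce the claim to the uniform strong law for one- and two-sample $U$-statistics already used in the proof of the preceding lemma, and then to an elementary deterministic optimisation. Throughout, $\mu_{ij}$ denotes the pairwise-distance mean as in that lemma, $\alpha=\mu_{12}-\mu_{11}$, $\beta=\mu_{12}-\mu_{22}$, and $s=\alpha+\beta$ (with $s\ge0$ by \citet{Baringhaus2004}).

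First I would fix $\tilde{\theta}\in[\delta_n,1-\delta_n]$ and set $t=\floor{n\tilde{\theta}}$, $\tau=\floor{n\theta}$. If $t\le\tau$ then $\X_t=\{\Zvec_1,\dots,\Zvec_t\}$ is an i.i.d.\ $F_1$-sample, while $\Y_t$ is the concatenation of the i.i.d.\ $F_1$-block $\{\Zvec_{t+1},\dots,\Zvec_\tau\}$ (size $\tau-t$) and the i.i.d.\ $F_2$-block $\{\Zvec_{\tau+1},\dots,\Zvec_n\}$ (size $n-\tau$); for $t>\tau$ the roles are mirrored. In either regime each of $T_{11}(t),T_{22}(t),T_{12}(t)$ is a convex combination of a one-sample $U$-statistic over the $F_1$-observations, a one-sample $U$-statistic over the $F_2$-observations, and a two-sample $U$-statistic between the two blocks, the combination weights being ratios of binomial coefficients that converge \emph{uniformly} on $[\delta_n,1-\delta_n]$ to the rational functions of $(\theta,\tilde{\theta})$ appearing in $A_\theta,B_\theta$ (e.g.\ $\binom{\tau-t}{2}/\binom{n-t}{2}\to\big(\tfrac{\theta-\tilde{\theta}}{1-\tilde{\theta}}\big)^2$ and $\tau(t-\tau)/\binom{t}{2}\to 2\tfrac{\theta}{\tilde{\theta}}\big(1-\tfrac{\theta}{\tilde{\theta}}\big)$).

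Next I would invoke the reverse-martingale strong law for $U$-statistics \citep{hoeffding1961strong} and its two-sample version \citep{PKsen1977GenUstat}, which give $\sup_{m\ge M}|U_m-\mu|\to0$ a.s.\ as $M\to\infty$. Since $n\delta_n\to\infty$, every $U$-statistic above whose index set diverges converges uniformly in $\tilde{\theta}$ to the appropriate $\mu_{ij}$; combined with the deterministic uniform convergence of the weights and with $t(n-t)/n^2\to\tilde{\theta}(1-\tilde{\theta})$, this gives $\sup_{\tilde{\theta}}|T_{11}(t)-c_{11}(\theta,\tilde{\theta})|\to0$ a.s.\ and likewise for $T_{22},T_{12}$, with $c_{ij}$ the weighted means in $A_\theta,B_\theta$. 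The one delicate issue is the boundary layer $\{|t-\tau| \text{ small}\}$, where the short $F_1$-block of $\Y_t$ (resp.\ short $F_2$-block of $\X_t$) is too small for the $U$-statistic SLLN; there, however, the associated weight is $O(n^{-1})$, and using $\max_{i<j\le n}\norm{\Zvec_i-\Zvec_j}=o(n)$ a.s.\ (from $\E\norm{\Zvec}<\infty$) together with a split at block size $\sqrt n$ makes that term's contribution uniformly negligible. Since $(a,b,c,w)\mapsto w\{(c-a)^2+(c-b)^2\}$ is Lipschitz on bounded sets and all four inputs are uniformly bounded (finiteness of the $\mu_{ij}$ and of $t(n-t)/n^2$), the displayed convergence to $\tilde{\theta}(1-\tilde{\theta})\{(c_{12}-c_{11})^2+(c_{12}-c_{22})^2\}$ follows, and a short rearrangement (the weights in each block summing to $1$) puts the limit in the stated piecewise $A_\theta,B_\theta$ form.

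Finally I would check that $\varrho(\theta,\cdot)$ is maximised at $\tilde{\theta}=\theta$. For $\tilde{\theta}\le\theta$ put $q=\tfrac{\theta-\tilde{\theta}}{1-\tilde{\theta}}\in[0,\theta]$; substituting $\tilde{\theta}=\tfrac{\theta-q}{1-q}$ yields $\varrho(\theta,\tilde{\theta})=(\theta-q)(1-\theta)\{\alpha^2+(\beta-qs)^2\}$, so $\varrho(\theta,\theta)=\theta(1-\theta)(\alpha^2+\beta^2)$, and since $\tfrac{\theta-q}{\theta}\le1-q$ it suffices to prove $(1-q)\{\alpha^2+(\beta-qs)^2\}\le\alpha^2+\beta^2$. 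Cancelling the factor $q$, this is the quadratic inequality $s^2q^2-s(s+2\beta)q+2\beta s+\alpha^2+\beta^2\ge0$ in $q$, whose discriminant equals $-s^2(3\alpha^2+2\alpha\beta+3\beta^2)\le0$; hence it holds, with equality only at $q=0$, i.e.\ $\tilde{\theta}=\theta$ (unless $\mu_{11}=\mu_{12}=\mu_{22}$, in which case $\varrho\equiv0$). The case $\tilde{\theta}>\theta$ is symmetric under $(F_1,\theta)\leftrightarrow(F_2,1-\theta)$, so $\varrho(\theta,\cdot)$ attains its global maximum at $\tilde{\theta}=\theta$, of value $\theta(1-\theta)\{(\mu_{12}-\mu_{11})^2+(\mu_{12}-\mu_{22})^2\}$.

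The main obstacle is the uniformity in the second step near $\tilde{\theta}=\theta$: the i.i.d.\ blocks there stop growing, so the $U$-statistic strong law is not directly available, and although the offending weights vanish, upgrading this from a pointwise to a uniform statement requires the two-regime argument based on $\max_{i<j\le n}\norm{\Zvec_i-\Zvec_j}=o(n)$ a.s.\ sketched above; the remaining pieces are a direct appeal to the strong law and routine algebra.
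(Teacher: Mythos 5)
Your overall route is the same as the paper's: split at $t=\floor{n\tilde{\theta}}$, write $T_{11},T_{22},T_{12}$ as convex combinations of one- and two-sample $U$-statistics over the $F_1$- and $F_2$-blocks with deterministic weights converging uniformly to the rational functions of $(\theta,\tilde{\theta})$, obtain uniformity from the reverse-martingale form of the Hoeffding/Sen strong laws exactly as in the preceding lemma, pass to $\mathcal{D}_n$ by boundedness/continuity, and finish with an algebraic check that $\varrho(\theta,\cdot)$ is maximized at $\tilde{\theta}=\theta$. Your maximization step — substituting $q=(\theta-\tilde{\theta})/(1-\tilde{\theta})$, reducing to a quadratic in $q$ whose discriminant is $-s^{2}(3\alpha^{2}+2\alpha\beta+3\beta^{2})\le 0$, and handling $\tilde{\theta}>\theta$ by the reflection $(F_1,\theta)\leftrightarrow(F_2,1-\theta)$ — is correct and is a cleaner piece of algebra than the paper's Jensen-type lower bound on $\varrho(\theta,\theta)-\varrho(\theta,\tilde{\theta})$; both give the same conclusion.

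The one step that does not work as written is precisely the boundary layer you flag. For $0<\tau-t\le\sqrt{n}$ the cross term between the short $F_1$-block $\{t+1,\dots,\tau\}$ and the remaining observations enters $T_{12}(t)$ and $T_{22}(t)$ with total weight of order $(\tau-t)/(n-t)=O(n^{-1/2})$, so bounding each distance by $\max_{i<j\le n}\norm{\Zvec_i-\Zvec_j}=o(n)$ only yields a contribution of order $o(n^{1/2})$, which is not negligible; the max-distance bound suffices only for the within-short-block $U$-statistic, whose weight is $O(n^{-1})$. The repair is routine and stays inside your toolkit: bound the cross average by $\frac{\tau-t}{n-t}\cdot\frac{1}{t}\sum_{i\le t}\norm{\Zvec_i}+\frac{1}{n-t}\sum_{j=t+1}^{\tau}\norm{\Zvec_j}$, control the first piece by the SLLN for $\frac{1}{t}\sum_{i\le t}\norm{\Zvec_i}$ (uniformly over $t\ge n\delta_n$) times the $O(n^{-1/2})$ weight, and the second by differencing partial sums, $\sum_{j=\tau-\sqrt n}^{\tau}\norm{\Zvec_j}=o(n)$ a.s., so both vanish uniformly. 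With that substitution your argument is complete — and in fact more explicit than the paper, which absorbs this boundary layer into the uniform-convergence claims imported from the preceding lemma without further comment.
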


\begin{proof} From Lemma A.~\ref{lem:perm_conc_BG_gen} and its proof, it follows that for ${\tilde{\theta}}\le\theta$,
	as $n\rightarrow\infty$, 
	\begin{align*}
		& \sup_{\delta_{n}\leq\tilde{\theta}\leq\theta}\Big|T_{11}\Big(\floor{n\tilde{\theta}}\Big)-\eta_{11}\Big|\overset{a.s.}{\longrightarrow}0,\\
		& \sup_{\delta_{n}\leq\tilde{\theta}\leq\theta}\bigg|T_{12}\Big(\floor{n\tilde{\theta}}\Big)-\frac{\theta-\tilde{\theta}}{1-\tilde{\theta}}\eta_{11}-\frac{1-\theta}{1-\tilde{\theta}}\eta_{12}\bigg|\overset{a.s.}{\longrightarrow}0,\,\,\text{and}\\
		& \sup_{\delta_{n}\leq\tilde{\theta}\leq\theta}\bigg|T_{22}\Big(\floor{n\tilde{\theta}}\Big)-\left(\frac{\theta-\tilde{\theta}}{1-\tilde{\theta}}\right)^{2}\eta_{11}-\left(\frac{1-\theta}{1-\tilde{\theta}}\right)^{2}\eta_{22}-2\frac{\left(\theta-\tilde{\theta}\right)\left(1-\theta\right)}{\left(1-\tilde{\theta}\right)^{2}}\eta_{12}\bigg|\overset{a.s.}{\longrightarrow}0.
	\end{align*}
	Similarly, for ${\tilde{\theta}}>\theta$, as $n$ tends to infinity,
	we have
	\begin{align*}
		& \sup_{\theta<\tilde{\theta}\leq1-\delta_{n}}\bigg|T_{22}\Big(\floor{n\tilde{\theta}}\Big)-\eta_{22}\bigg|\overset{a.s.}{\longrightarrow}0,\\ &
		\sup_{\theta<\tilde{\theta}\leq1-\delta_{n}}\bigg|T_{12}\Big(\floor{n\tilde{\theta}}\Big)-\frac{\theta-\tilde{\theta}}{1-\tilde{\theta}}\eta_{11}-\frac{1-\theta}{1-\tilde{\theta}}\eta_{12}\bigg|\overset{a.s.}{\longrightarrow}0, \,\,\text{and} \\
		& \sup_{\theta<\tilde{\theta}\leq1-\delta_{n}}\Big|T_{11}\Big(\floor{n\tilde{\theta}}\Big)-\frac{\theta^{2}}{\tilde{\theta}^{2}}\eta_{11}-\left(1-\frac{\theta}{\tilde{\theta}}\right)^{2}\eta_{22}-2\frac{\theta}{\tilde{\theta}}\left(1-\frac{\theta}{\tilde{\theta}}\right)\eta_{12}\Big|\overset{a.s.}{\longrightarrow}0.
	\end{align*}
	Now define $T_{1}\left(\tilde{\theta}\right)=T_{12}\left(\floor{n\tilde{\theta}}\right)-T_{11}\left(\floor{n\tilde{\theta}}\right)$
	and $T_{2}\left(\tilde{\theta}\right)=T_{12}\left(\floor{n\tilde{\theta}}\right)-T_{22}\left(\floor{n\tilde{\theta}}\right).$
	Combining the above results, we get
	\begin{align*}
		& \sup_{\tilde{\theta}\in\left[\delta_{n},1-\delta_{n}\right]}\left|T_{1}\left(\tilde{\theta}\right)-f_{1}\left(\tilde{\theta};\theta\right)\right|\overset{a.s.}{\longrightarrow}0\\
		& \sup_{\tilde{\theta}\in\left[\delta_{n},1-\delta_{n}\right]}\left|T_{2}\left(\tilde{\theta}\right)-f_{2}\left(\tilde{\theta};\theta\right)\right|\overset{a.s.}{\longrightarrow}0
	\end{align*}
	% where 
	\begin{align*}
		\text{where } f_{1}\left(\tilde{\theta};\theta\right)&=\frac{1-\theta}{1-\tilde{\theta}}\left(\eta_{12}-\eta_{11}\right)\mathbb{I}_{\tilde{\theta}\leq\theta}\\ &+\frac{\theta}{\tilde{\theta}}\left[\left(1-\frac{\theta}{\tilde{\theta}}\right)\left(\eta_{22}-\eta_{12}\right)+\frac{\theta}{\tilde{\theta}}\left(\eta_{12}-\eta_{11}\right)\right]\mathbb{I}_{\tilde{\theta}>\theta}
	\end{align*}
	% and 
	\begin{align*}
		\text{ and }f_{2}\left(\tilde{\theta};\theta\right) & =\frac{1-\theta}{1-\tilde{\theta}}\left[\frac{\theta-\tilde{\theta}}{1-\tilde{\theta}}\left(\eta_{11}-\eta_{12}\right)+\frac{1-\theta}{1-\tilde{\theta}}\left(\eta_{12}-\eta_{22}\right)\right]\mathbb{I}_{\tilde{\theta}\leq\theta}\\
		& +\frac{\theta}{\tilde{\theta}}\left(\eta_{12}-\eta_{22}\right)\mathbb{I}_{\tilde{\theta}>\theta}.
	\end{align*}
	Next, note that
	\begin{align*}
		& \sup_{\tilde{\theta}\in\left[\delta_{n},1-\delta_{n}\right]}\left|\left[T_{1}\left(\tilde{\theta}\right)\right]^{2}-\left[f_{1}\left(\tilde{\theta};\theta\right)\right]^{2}\right|\\
		& =\sup_{\tilde{\theta}\in\left[\delta_{n},1-\delta_{n}\right]}\left|T_{1}\left(\tilde{\theta}\right)-f_{1}\left(\tilde{\theta};\theta\right)\right|\left|T_{1}\left(\tilde{\theta}\right)+f_{1}\left(\tilde{\theta};\theta\right)\right|.
	\end{align*}
	Since $\sup\limits_{\tilde{\theta}\in\left[\delta_{n},1-\delta_{n}
		\right]}\left|T_{1}\left(\tilde{\theta}\right)-f_{1}\left(\tilde{
		\theta};\theta\right)\right|\overset{a.s.}{\longrightarrow}0$ and 
	$\sup\limits_{\tilde{\theta}\in\left[\delta_{n},1-\delta_{n}\right]}
	\left|T_{1}\left(\tilde{\theta}\right)+f_{1}\left(\tilde{\theta};
	\theta\right)\right|$ is almost surely bounded, (as the function $\left|T_{1}\left(\tilde{\theta}\right)+f_{1}\left(\tilde{\theta};\theta\right)\right|$
	is finite valued for any $\tilde{\theta}\in\left[\delta_{n},1-\delta_{n}\right]$), we have
	\[
	\sup_{\tilde{\theta}\in\left[\delta_{n},1-\delta_{n}\right]}\left|\left[T_{1}\left(\tilde{\theta}\right)\right]^{2}-\left[f_{1}\left(\tilde{\theta};\theta\right)\right]^{2}\right|\overset{a.s.}{\longrightarrow}0.
	\]
	Similarly, we can show almost sure uniform convergence of $\left[T_{2}\left(\tilde{\theta}\right)\right]^2$ to $\left[f_{1}\left(\tilde{\theta};\theta\right)\right]^{2}$. One can check that $\left[f_{1}\left(\tilde{\theta};\theta\right)\right]^{2}+\left[f_{2}\left(\tilde{\theta};\theta\right)\right]^{2}=A_{\theta}({\tilde \theta}) ~\mathbb{I}_{\tilde{\theta}\leq\theta} + B_{\theta}({\tilde \theta})~\mathbb{I}_{\tilde{\theta}>\theta}.$ Therefore, 
	%So, combining these two, we get
	%using the continuous mapping theorem and 
	%similar arguments as in the Proof
	%of Lemma~A.\ref{Uniform Convergence LST}, we get %the following uniform
	%limit function for the statistic $\mathcal{D}_{n}\left(\mathcal{X}_{\floor{n\tilde{\theta}}},\mathcal{Y}_{\floor{n\tilde{\theta}}}\right)$
	%as follows
	\begin{align*}
		\frac{\floor{n\tilde{\theta}}\left(n-\floor{n\tilde{\theta}}\right)}{n^{2}}\mathcal{D}_{n}\left(\mathcal{X}_{\floor{n\tilde{\theta}}},\mathcal{Y}_{\floor{n\tilde{\theta}}}\right) \overset{a.s.}{\longrightarrow} & \tilde{\theta}\left(1-\tilde{\theta}\right)\Big[A_{\theta}({\tilde \theta}) ~\mathbb{I}_{\tilde{\theta}\leq\theta} + B_{\theta}({\tilde \theta})~\mathbb{I}_{\tilde{\theta}>\theta}\Big]\\
		& =\varrho\left(\theta,\tilde{\theta}\right). 
	\end{align*}
	If we define $\Delta_{1}=\eta_{12}-\eta_{11}$
	and $\Delta_{2}=\eta_{12}-\eta_{22}$, $\varrho\left(\theta,\tilde{\theta}\right)$ can be expressed as
	\[
	\varrho\left(\theta,\tilde{\theta}\right)=\begin{cases}
		\frac{\tilde{\theta}\left(1-\theta\right)^{2}}{1-\tilde{\theta}}\Bigg[\left\{ \left(\Delta_{1}\right)^{2}+\left(\frac{\theta-\tilde{\theta}}{1-\tilde{\theta}}\left(-\Delta_{1}\right)+\frac{1-\theta}{1-\tilde{\theta}}\left(\Delta_{2}\right)\right)^{2}\right\}  & \tilde{\theta}<\theta\\
		\theta\left(1-\theta\right)\left[\left(\Delta_{1}\right)^{2}+\left(\Delta_{2}\right)^{2}\right] & \tilde{\theta}=\theta\\
		\frac{\left(1-\tilde{\theta}\right)\theta^{2}}{\tilde{\theta}}\left\{ \left(\left(1-\frac{\theta}{\tilde{\theta}}\right)\left(-\Delta_{2}\right)+\frac{\theta}{\tilde{\theta}}\left(\Delta_{1}\right)\right)^{2}+\left(\Delta_{2}\right)^{2}\right\}  & \tilde{\theta}>\theta
	\end{cases}
	\]
	%and itself $\mathcal{D}\left(\F_{1},\F_{2}\right)=\left(\Delta_{1}\right)^{2}+\left(\Delta_{2}\right)^{2}$.
	Using Jensen's inequality, for $\tilde{\theta}<\theta$, we have
	\begin{align*}
		&\varrho(\theta,\theta)-\varrho(\theta,{\tilde \theta})\\
		&=\theta\left(1-\theta\right)\left[\left(\Delta_{1}\right)^{2}+\left(\Delta_{2}\right)^{2}\right]\\
		& ~~~~~~~~~~~~~~~~~~~~~ - \frac{\tilde{\theta}\left(1-\theta\right)^{2}}{1-\tilde{\theta}}\left[\left(\Delta_{1}\right)^{2}+\left\{ \frac{\theta-\tilde{\theta}}{1-\tilde{\theta}}\left(-\Delta_{1}\right)+\frac{1-\theta}{1-\tilde{\theta}}\left(\Delta_{2}\right)\right\} ^{2}\right]\\
		& \geq\left(1-\theta\right)\tilde{\theta}\Bigg[\left(\Delta_{1}\right)^{2}\left\{ \frac{\theta}{\tilde{\theta}}-\frac{1-\theta}{1-\tilde{\theta}}\left(2-\frac{1-\theta}{1-\tilde{\theta}}\right)\right\} 
		+\left(\Delta_{2}\right)^{2}\left\{ \frac{\theta}{\tilde{\theta}}-\left(\frac{1-\theta}{1-\tilde{\theta}}\right)^{2}\right\}.
	\end{align*}
	For $\tilde{\theta}<\theta$,
	since $\frac{\theta}{\tilde{\theta}}-\frac{1-\theta}{1-\tilde{\theta}}\left(2-\frac{1-\theta}{1-\tilde{\theta}}\right)$
	and $\frac{\theta}{\tilde{\theta}}-\left(\frac{1-\theta}{1-\tilde{\theta}}\right)^{2}$ both are positive, the quantity on the right side of the above inequality is also strictly positive. Similarly
	we can show that $\varrho(\theta,\theta)-\varrho(\theta,{\tilde \theta})>0$ for $\tilde{\theta}>\theta$. Therefore, 
	$\varrho\left(\theta,\tilde{\theta}\right)$ is maximized at $\tilde{\theta}=\theta$.
\end{proof}

\begin{applem}
	\label{HDLSS Limits} Under Assumption (A1)-(A3), as $d$ tends to infinity,
	\begin{equation}
		\frac{1}{\sqrt{d}}\norm{\Zvec_{i}-\Zvec_{j}}\overset{\P}{\longrightarrow}\begin{cases}
			\sqrt{2}\sigma_{1} & i,j\leq\tau\\
			\sqrt{2}\sigma_{2} & i,j>\tau\\
			\sqrt{\nu^{2}+\sigma_{1}^{2}+\sigma_{2}^{2}} & i\leq\tau,j>\tau
		\end{cases}\label{eq:HDLSS-limits}
	\end{equation}
	% where the quantities, $\mu_{ij}$ are defined as $\E\norm{\Zvec-\Zvec^{*}}$
	% for $\Zvec\sim\F_{i}$ and $\Zvec^{*}\sim\F_{j}$ (considering that
	% $\eta_{12}$ and $\mu_{21}$ are same due to symmetry). 
\end{applem}

\begin{proof}
	Note that under (A1) and (A2), the weak law of large number holds, i.e., $d^{-1}\Big|\norm{\Zvec_{i}-\Zvec_{j}}^{2}-\E\norm{\Zvec_{i}-\Zvec_{j}}^{2}\Big| \stackrel{P}{\rightarrow}0$ as $d \rightarrow \infty$, and under (A3), we can compute $\lim_{d\to\infty}d^{-1}\E\norm{\Zvec_{i}-\Zvec_{j}}^{2}$ for different choices of $i$ and $j$. %Under the Assumption (A3) this limit exists. For that
	First note that if $\Zvec \sim F$, %$\norm{\Zvec_{i}-\Zvec_{j}}^{2}=\norm{\Zvec_{i}}^{2}+\norm{\Zvec_{j}}^{2}-2\Zvec_{i}^{T}\Zvec_{j}$.
	%Now since,
	$\E\norm{\Zvec}^{2}=\norm{\muvec_{\F}}^{2}+\text{tr}\left(\sigmat_{\F}\right)$, where
	%where $\F$ is the underlying distribution of random vector $\Zvec$
	$\muvec_{\F}$ and $\sigmat_{\F}$ are the mean vector and
	variance-covariance matrix of the distribution $\F$. 
	
	Now, for $i,j\leq\tau$, we have
	\begin{align*}
		\E\norm{\Zvec_{i}-\Zvec_{j}}^{2}
		& =\E\norm{\Zvec_{i}}^{2}+\E\norm{\Zvec_{j}}^{2}-2\E\left(\Zvec_{i}^{T}\Zvec_{j}\right) \\ & =2\norm{\muvec_{\F_{1}}}^{2}+2\text{tr}\left(\sigmat_{\F_{1}}\right)-2\norm{\muvec_{\F_{1}}}^{2}
		=2\text{tr}\left(\sigmat_{\F_{1}}\right).
	\end{align*}
	This implies $\lim_{d\to\infty}d^{-1}\E\norm{\Zvec_{i}-\Zvec_{j}}^{2}=2\lim_{d\to\infty}d^{-1}\text{tr}\left(\sigmat_{\F_{1}}\right)=2\sigma_{1}^{2}$.
	Similarly, for $i,j>\tau$. we have $\lim_{d\to\infty}d^{-1}\E\norm{\Zvec_{i}-\Zvec_{j}}^{2}=2\sigma_{2}^{2}$. 
	%we For
	%the other two cases if we take 
	Again, for $i\leq\tau,j>\tau$, we have 
	\begin{align*}
		\E\norm{\Zvec_{i}-\Zvec_{j}}^{2}
		& =\E\norm{\Zvec_{i}}^{2}+\E\norm{\Zvec_{j}}^{2}-2\E\left(\Zvec_{i}^{T}\Zvec_{j}\right)\\
		& =\norm{\muvec_{\F_{1}}}^{2}+\text{tr}\left(\sigmat_{\F_{1}}\right)+\norm{\muvec_{\F_{2}}}^{2}+\text{tr}\left(\sigmat_{\F_{2}}\right)-2\muvec_{\F_{1}}^{T}\muvec_{\F_{2}}\\
		& =\norm{\muvec_{\F_{1}}-\muvec_{\F_{2}}}^{2}+\text{tr}\left(\sigmat_{\F_{1}}\right)+\text{tr}\left(\sigmat_{\F_{2}}\right).
	\end{align*}
	Therefore, 
	$
	\lim_{d\to\infty}d^{-1}\E\norm{\Zvec_{i}-\Zvec_{j}}^{2}
	% & =\lim_{d\to\infty}\frac{1}{d}\left[\norm{\muvec_{\F_{1}}-\muvec_{\F_{2}}}^{2}+\text{tr}\left(\sigmat_{\F_{1}}\right)+\text{tr}\left(\sigmat_{\F_{2}}\right)\right]\\
	%& 
	=\nu^{2}+\sigma_{1}^{2}+\sigma_{2}^{2}.
	$ The proof follows from these facts using the continuous mapping theorem.
	%Lastly,
	%\begin{align*}
	%&d^{-1/2}\|\Zvec_i-\Zvec_j\|^{2}\xrightarrow{\P}
	%\Big(\lim_{d\to\infty}d^{-1}\E\|\Zvec_i-\Zvec_j\|^{2}\Big)^{1/2}\\
	%&=\sqrt2\,\sigma_1\,\1_{\{i\le\tau,\ j\le\tau\}}+\sqrt2\,\sigma_2\,\1_{\{\tau<i,\ \tau<j\}}+\sqrt{\nu^{2}+\sigma_1^{2}+\sigma_2^{2}}\1_{\{i\le\tau<j\}}.
	%\end{align*}
	%hence proved.
\end{proof}

\begin{applem}\label{sub-gaussian:conc_unscaled}
	Let $\Zvec_1,\ldots,\Zvec_n\in\R^d$ be independent random vectors with 
	independent coordinates such that $\max_{i,q}\|Z_i^{(q)}\|_{\psi_2}\le K$,
	where $\|U\|_{\psi_2}:=\inf\{c>0:\ \E\exp(U^2/c^2)\le 2\}$.
	Define $U_b:=\binom{b}{2}^{-1}\sum_{1\le i<j\le b}\|\Zvec_i-\Zvec_j\|_2$ for $b\in\{2,\ldots,n\}$ 
	and $\mu_b:=\E U_b$. Then, for $k=\lfloor b/2\rfloor$, there exist absolute 
	constants $c,C>0$ such that the following two inequalities hold.
	
	\noindent{(a)} For every $i\neq j$ and $x\ge 0$,
	\begin{equation}
		\label{eq:sg_kernel_tail_unscaled}
		\P\big(|\|\Zvec_i-\Zvec_j\|_2-\E\|\Zvec_i-\Zvec_j\|_2|\ge x\big)
		\;\le\; 2\exp(-c\, x^2/K^2).
	\end{equation}
	
	\noindent{(b)} For every $\varepsilon>0$,
	\begin{equation}
		\label{eq:sg_ustat_tail_unscaled}
		\P\big(|U_b-\mu_b|\ge\varepsilon\big)
		\;\le\;
		2\exp\!\left\{-\,c\,k\,\min\!\Big(\frac{\varepsilon^2}{K^2},
		\frac{\varepsilon}{K}\Big)\right\}.
	\end{equation}
\end{applem}

\begin{proof}
	\noindent{(a)}
	%\noindent\textit{A. Sub-Gaussian projections of $\Zvec_i-\Zvec_j$.}
	For each $q-1,\ldots,d$, the triangle inequality for Orlicz norms gives
	\[
	\big\|Z_i^{(q)}-Z_j^{(q)}\big\|_{\psi_2}
	\;\le\; \big\|Z_i^{(q)}\big\|_{\psi_2}+\big\|Z_j^{(q)}\big\|_{\psi_2}
	\;\le\; 2K.
	\]
	A standard consequence of $\|W\|_{\psi_2}\le 2K$ is the MGF bound \citep[see Chapter 2 from][]{wainwright2019high} for an absolute constant $c_1>0$ such that
	\[
	\E\exp\!\big(\lambda\{W-\E W\}\big)\ \le\ \exp\!\big(c_1\lambda^2K^2\big)
	\qquad\text{for all }\lambda\in\R.
	\]
	Taking $\Xvec_{ij}:=\Zvec_i-\Zvec_j$ 
	and using independence of coordinates, for any $u\in\R^d$, we get
	\begin{align*}
		& \E\exp\!\Big(\lambda\big\langle u,\Xvec_{ij}-\E \Xvec_{ij}\big\rangle\Big)
		\\ &=\prod_{q=1}^d \E\exp\!\Big(\lambda u_q
		\big\{(Z_i^{(q)}-Z_j^{(q)})-\E(Z_i^{(q)}-Z_j^{(q)})\big\}\Big)
		\le \exp\!\big(c_1\lambda^2K^2\|u\|_2^2\big).    
	\end{align*}
	Hence $\Xvec_{ij}$ is a sub-Gaussian random vector with variance proxy 
	$\sigma^2:=c_1K^2$, uniformly in $d$.
	So, a standard concentration inequality for Lipschitz 
	functions of sub-Gaussian vectors (the map $x\mapsto\|x\|_2$ is 
	$1$-Lipschitz) gives absolute constants $c_2,C>0$ such that for all 
	$x\ge 0$,
	\[
	\P\Big(\big|\|\Xvec_{ij}\|_2-\E\|Xvec_{ij}\|_2\big|\ge x\Big)
	\;\le\; 2\exp\!\left(-\frac{c_2\,x^2}{\sigma^2}\right)
	= 2\exp\!\left(-\frac{c_2\,x^2}{c_1 K^2}\right).
	\]
	Therefore, the bound 
	$\big\|\,\|\Zvec_i-\Zvec_j\|_2-\E\|\Zvec_i-\Zvec_j\|_2\,\big\|_{\psi_2}\le CK$
	follows immediately by taking $c_2/c_1=c$. Note that here the concentration bound is independent of $d$.
	
	\smallskip
	\noindent\textit{(b)} %It follows from \eqref{eq:sg_kernel_tail_unscaled} that the 
	Define the centered kernel $h_{ij}:=\|\Zvec_i-\Zvec_j\|_2-\E\|\Zvec_i-\Zvec_j\|_2$ that
	satisfies $\|h_{ij}\|_{\psi_2}\le CK$. %Hence, in particular, it admits an 
	%exponential tail envelope. 
	Since $x^2/K^2\ge x/K - 1/4$ 
	(folllows from $(x/K-1/2)^2\ge 0$), from \eqref{eq:sg_kernel_tail_unscaled} it follows that there exist constants $L_1,L_2>0$ depending 
	only on $c$ such that
	\[
	\P\big(|h_{ij}|\ge x\big)
	\;\le\; L_1\exp(-L_2\,x/K)
	\qquad\text{for all }x\ge 0.
	\]
	Now $U_b-\mu_b = \binom{b}{2}^{-1}\sum_{1\le i<j\le b}h_{ij}$ is a 
	mean-zero order-$2$ U-statistic. Applying 
	the U-statistic concentration inequality of Ning et al.\ (their 
	Lemma~A.3) with $k=\lfloor b/2\rfloor$ yields an absolute 
	constant $c_4>0$ such that
	\[
	\P\big(|U_b-\mu_b|\ge\varepsilon\big)
	\;\le\; 2\exp\!\left\{-c_4\,k\,\min\!\left(\frac{\varepsilon^2}{K^2},
	\frac{\varepsilon}{K}\right)\right\},
	\]
	where $L_1$ and $L_2$ have been absorbed into $c_4$. %This is 
	%\eqref{eq:sg_ustat_tail_unscaled}.
\end{proof}

\subsection[Appendix B: Proofs of the lemma and theorems]{Appendix B: Proofs of the lemma and theorems stated in the main article}

\begin{proof}[\textbf{Proof of Lemma~\ref{lem:1}.}]
	\textbf{ } Note that under the given condition, $\E\norm{\Xvec_1-\Xvec_2}$, $\E\norm{\Xvec_1-\Yvec_1}$ and $\E\norm{\Yvec_1-\Yvec_2}$ are also finite. The `if' part is fairly easy to establish. Clearly, $\F_{1}=\F_{2}$ implies $\E\norm{\Xvec_1-\Xvec_2}=\E\norm{\Xvec_1-\Yvec_1}=\E\norm{\Yvec_1-\Yvec_2}$,
	which leads to $\mathcal{D}\left(\F_{1},\F_{2}\right)=0$.
	
	For the `only if' part, note that $\mathcal{D}\left(\F_{1},\F_{2}\right)=0$ implies $\mathbb{E}\norm{\Xvec_1-\Xvec_2}=\mathbb{E}\norm{\Yvec_1-\Xvec_2}$
	and $\E\norm{\Xvec_1-\Yvec_2}=\E\norm{\Yvec_1-\Yvec_2}$, and hence, ${\cal E}(\F_1,\F_2)=2\E\norm{\Xvec_1-\Yvec_1}-\E\norm{\Xvec_1-\Xvec_2}-\E\norm{\Yvec_1-\Yvec_2}=0$. Now, by Lemma 2.2 of \cite{Baringhaus2004}, we get $\F_1=\F_2$.%, we know that the quantity $2\E\norm{\Xvec-\Yvec}-\E\norm{\Xvec-\Xvec^{*}}-\E\norm{\Yvec-\Yvec^{*}}$
	%which is the energy distance between the populations $\F_{1}$ and
	%$\F_{2}$ is always non-negative valued with being equal to zero if
	%and only if $\F_{1}=\F_{2}$. Hence proved. 
\end{proof}

\begin{proof}[\textbf{Proof of Theorem~\ref{thm:large-sam-consistency}}]
	(a) %Recall that $w(t)={t(n-t)}/{n^2}$
	%and 
	%\[
	%\widehat T_n
	%=\arg\max_{t\in\mathcal A_n} w(t)\,\mathcal D_n(\mathcal X_t,\mathcal Y_t),
	%\text{ where } \mathcal A_n=\{\lceil n\delta_n\rceil,\ldots,\lceil n(1-\delta_n)\rceil\}.\]
	For $\tilde\theta\in[\delta_n,1-\delta_n]$, define %the sample analog of 
	%$\varrho(\theta,\tilde\theta)$ as
	\[
	\varrho_n(\theta,\tilde\theta)
	:=\frac{\lfloor n\tilde{\theta}\rfloor\big(n-\lfloor n\tilde{\theta}\rfloor\big)}{n^{2}}\,
	\mathcal{D}_{n}\!\left(\mathcal{X}_{\lfloor n\tilde{\theta}\rfloor},
	\mathcal{Y}_{\lfloor n\tilde{\theta}\rfloor}\right)
	\]
	\[
	\text{and  } e_n(\omega):=\sup_{\tilde{\theta}\in[\delta_n,1-\delta_n]}
	\big|\varrho_n(\theta,\tilde{\theta})(\omega)-\varrho(\theta,\tilde{\theta})\big|.
	\]%Since $\widehat\theta_n=\widehat T_n/n$ is a maximizer of
	%$\tilde\theta\mapsto \varrho_n(\theta,\tilde\theta)$ over $[\delta_n,1-\delta_n]$, and hence From Lemma A.~\ref{Uniform Convergence LST-2}, we have its almost sure uniform convergence to $\varrho(\theta,\tilde\theta)$. 
	%\begin{equation}
	%varrho_n(\theta,\widehat\theta_n)\ge %\varrho_n(\theta,\theta).
	%\label{eq:Thm1_argmaxineq_simple}
	%\end{equation}
	
	By Lemma A.~\ref{Uniform Convergence LST-2}, %(almost sure convergence of $e_n$to $0$),
	%\begin{equation}
	%\sup_{\tilde{\theta}\in[\delta_n,1-\delta_n]}
	%\big|\varrho_n(\theta,\tilde{\theta})-%\varrho(\theta,\tilde{\theta})\big|
	%\xrightarrow{a.s.} 0.
	%\label{eq:Thm1_unifconv_simple}
	%\end{equation}
	%Define the (random) uniform error
	%From \eqref{eq:Thm1_unifconv_simple}, 
	there exists a measurable set $\Omega_0$ with
	$\P(\Omega_0)=1$ such that $e_n(\omega)\to 0$ for every $\omega\in\Omega_0$. %Using \eqref{eq:Thm1_argmaxineq_simple} and the definition of $e_n(\omega)$,
	%for every $n$ we obtain the pathwise inequalities for some fixed $\omega\in\Omega_0$. 
	Since $\widehat\theta_n=\widehat T_n/n$ is a maximizer of
	$\tilde\theta\mapsto \varrho_n(\theta,\tilde\theta)$, we have 
	\begin{align}
		\varrho(\theta,\widehat\theta_n)(\omega)
		&\ge \varrho_n(\theta,\widehat\theta_n)(\omega)-e_n(\omega) \nonumber\\
		&\ge \varrho_n(\theta,\theta)(\omega)-e_n(\omega)
		\ge \varrho(\theta,\theta)-2e_n(\omega),
		\label{eq:Thm1_rho_lower_pathwise}
	\end{align}
	where %Thus,
	%\begin{equation}
	%\varrho(\theta,\widehat\theta_n)(\omega)\ge \varrho(\theta,\theta)-t_n(\omega),
	%\qquad
	$e_n(\omega)\to 0$ as $n \rightarrow \infty$.
	%\label{eq:Thm1_rho_lower_pathwise}
	%\end{equation}
	Since $\tilde\theta\mapsto \varrho(\theta,\tilde\theta)$ is a continuous function defined on a compact support and it has a unique
	maximum at $\tilde\theta=\theta$ (see lemma A.\ref{Uniform Convergence LST-2}), 
	for every $\varepsilon>0$
	there exists $\Delta(\varepsilon)>0$ such that
	\begin{equation}
		\sup_{|\tilde{\theta}-\theta|\ge \varepsilon}\varrho(\theta,\tilde{\theta})
		\le \varrho(\theta,\theta)-\Delta(\varepsilon).
		\label{eq:Thm1_separation_simple}
	\end{equation}
	
	Since $e_n(\omega)\to 0$, for any $\varepsilon>0$. there exists 
	$N(\omega,\varepsilon) \in {\mathbb N}$ such that for all $n\ge N(\omega,\varepsilon)$, $e_n(\omega)<\Delta(\varepsilon)/4.$
	For such $n$, \eqref{eq:Thm1_rho_lower_pathwise} gives $\varrho(\theta,\widehat\theta_n)(\omega)\ge \varrho(\theta,\theta)-\Delta(\varepsilon)/2.
	$ If $|\widehat\theta_n(\omega)-\theta|\ge\varepsilon$ for some $n\ge N(\omega,\varepsilon)$, then
	\eqref{eq:Thm1_separation_simple} would imply $\varrho(\theta,\widehat\theta_n)(\omega)\le \varrho(\theta,\theta)-\Delta(\varepsilon),
	$ which is a contradiction. Therefore,
	\[
	|\widehat\theta_n(\omega)-\theta|<\varepsilon
	\qquad\text{for all } n\ge N(\omega,\varepsilon).
	\]
	Since $\varepsilon>0$ and $\omega\in\Omega_0$ are arbitrary, we conclude that
	$\widehat\theta_n\stackrel{a.s.}{\rightarrow}\theta$. 
	
	\noindent 
	(b) Recall that we reject $H_0$ if $\widehat{S}_{n}=\max_{t\in\mathcal{A}_{n}}w(t)\,\mathcal{D}_{n}(\mathcal{X}_{t},\mathcal{Y}_{t})$ is larger than the permutation cut-off%\widehat c_{1-\alpha}$, %where
	%\[
	%\widehat{S}_{n},
	%\], 
	%where the permutation cut-off is is give
	\[
	\widehat c_{1-\alpha}
	=\inf\left\{ x\in\mathbb{R}^{+}:
	\frac{1}{n!}\sum_{\boldsymbol{\pi}\in\Pi_{n}}\mathds{1}\!\left[\widehat{S}_{n}^{(\boldsymbol{\pi})}\le x\right]\ge 1-\alpha\right\}.
	\]
	Under $H_0$, $(\Zvec_{\pi(1)},\dots,\Zvec_{\pi(n)})$ is exchangeable, so
	$\P_{H_0}(\widehat{S}_{n}>\widehat c_{1-\alpha})\le \alpha$.
	By Lemma~A.\ref{lem:perm_stat_vanish_BG}, we have
	$\widehat c_{1-\alpha}\xrightarrow{\P}0$. On the other hand, under $H_1$, by continuity of the maximum functional,
	\[
	\widehat{S}_{n}
	=\max_{\tilde{\theta}\in[\delta_n,1-\delta_n]}
	\varrho_n(\theta,\tilde{\theta})\xrightarrow{a.s.}
	\max_{\tilde{\theta}\in[\delta_n,1-\delta_n]}\varrho(\theta,\tilde{\theta})
	=\theta(1-\theta)\,D(\F_1,\F_2),
	\]
	which is positive when $\theta\in(0,1)$ and $\F_1\neq \F_2$. %$\eta\in\big(0,\theta(1-\theta)D(\F_1,\F_2)\big)$, 
	Hence, the change-point is detected with probability tending to $1$ as $n$ tends to infinity.
	%\begin{equation}
	%\P\!\left(\widehat{S}_{n}>\theta(1-\theta)D(\F_1,\F_2)-\eta\right)\to 1.
	%\label{eq:Thm1_Sn_lowerprob}
	%\end{equation}
	% Therefore,
	%\begin{equation}
	%\P\!\left(\widehat c_{1-\alpha}<\eta\right)\to 1.
	%\label{eq:Thm1_cutoff_small}
	%\end{equation}
	%Combining \eqref{eq:Thm1_Sn_lowerprob} and \eqref{eq:Thm1_cutoff_small} yields
	%\[
	%\P_{H_1}\big(\widehat{S}_{n}>\widehat c_{1-\alpha}\big)
	%\ge
	%\P\!\left(\widehat{S}_{n}>\theta(1-\theta)D(\F_1,\F_2)-\eta,\ \widehat c_{1-\alpha}<\eta\right)\to 1,
	%\]
	%which completes the proof.
\end{proof}

\begin{proof}[\textbf{Proof of Theorem~\ref{thm:HDLSS-consistency}.}]
	\label{proof_thm_2}
	It follows from Lemma A.\ref{HDLSS Limits} that
	\begin{align*}
		\lim_{d\to\infty}\frac{1}{d}D\left(\F_{1},\F_{2}\right)
		&=\left(\sqrt{\sigma_{1}^{2}+\sigma_{2}^{2}+\nu^{2}}-\sqrt{2}\sigma_{1}\right)^{2}+\left(\sqrt{\sigma_{1}^{2}+\sigma_{2}^{2}+\nu^{2}}-\sqrt{2}\sigma_{2}\right)^{2}\\
		&=\frac{1}{2}\left[\left(2\sqrt{\sigma_{1}^{2}+\sigma_{2}^{2}+\nu^{2}}-\sqrt{2}\sigma_{1}-\sqrt{2}\sigma_{2}\right)^{2}+\left(\sqrt{2}\sigma_{2}-\sqrt{2}\sigma_{1}\right)^{2}\right].
	\end{align*}
	This is non-negative, and equals $0$ if and only if $\sigma_1=\sigma_2$ and
	$2\sqrt{2\sigma^{2}+\nu^{2}}-2\sqrt{2}\sigma=0$
	(taking $\sigma_1=\sigma_2=\sigma$), which is equivalent to $\nu^2=0$.
	Hence $\lim_{d\to\infty}\frac{1}{d}\mathcal{D}\left(\F_{1},\F_{2}\right)>0$
	if and only if $\nu^2+(\sigma_1-\sigma_2)^2 > 0$.
	
	\noindent
	(a) Denote the limiting pairwise distances (see Lemma A.\ref{HDLSS Limits}) $\sqrt{2}\sigma_{1},\sqrt{2}\sigma_{2}$
	and $\sqrt{\nu^{2}+\sigma_{1}^{2}+\sigma_{2}^{2}}$ as $\delta_{11},\delta_{22}$
	and $\delta_{12}$ respectively. One can show that
	\begin{align*}
		& \frac{1}{\sqrt{d}}T_{12}\left(t\right)\overset{P}{\longrightarrow}\begin{cases}
			\frac{1}{\left(n-t\right)}\left(\left(\tau-t\right)\delta_{11}+\left(n-\tau\right)\delta_{12}\right) & t\leq\tau\\
			\frac{1}{t}\left(\tau\delta_{12}+\left(t-\tau\right)\delta_{22}\right) & t>\tau
		\end{cases}\label{T_12}\\
		& \frac{1}{\sqrt{d}}T_{11}\left(t\right)\overset{P}{\longrightarrow}\begin{cases}
			\delta_{11} & t\leq\tau\\
			{t \choose 2}^{-1}\left({\tau \choose 2}\delta_{11}+{t-\tau \choose 2}\delta_{22}+\tau\left(t-\tau\right)\delta_{12}\right) & t>\tau
		\end{cases}
	\end{align*}
	\begin{align*}
		& \frac{1}{\sqrt{d}}T_{22}\left(t\right)\overset{P}{\longrightarrow}\begin{cases}
			{n-t \choose 2}^{-1}\bigg({\tau-t \choose 2}\delta_{11}+{n-\tau \choose 2}\delta_{22}+\left(n-\tau\right)\left(\tau-t\right)\delta_{12}\bigg) & t\leq\tau\\
			\delta_{22} & t>\tau
		\end{cases}
	\end{align*}
	Using $\omega_{1}:=\delta_{12}-\delta_{11}$
	, $\omega_{2}:=\delta_{12}-\delta_{22}$ and also defining $\alpha_{1}=\frac{\tau}{t}$, $\beta_{1}=\frac{n-\tau}{n-t}$, $\alpha_{2}=\frac{\tau-1}{t-1}$, $\beta_{2}=\frac{n-\tau-1}{n-t-1}$, $\alpha_{2}^{\prime}=1-\alpha_{2}$,
	and $\beta_{2}^{\prime}=1-\beta_{2}$, we get 
	\begin{align*}
		& \frac{1}{\sqrt{d}}\left\{ T_{12}\left(t\right)-T_{11}\left(t\right)\right\} \overset{P}{\longrightarrow}\beta_{1}\omega_{1}\mathbb{I}_{t\leq\tau}+\alpha_{1}\left(-\alpha_{2}^{\prime}\omega_{2}+\alpha_{2}\omega_{1}\right)\mathbb{I}_{t>\tau}\\
		& \frac{1}{\sqrt{d}}\left\{ T_{12}\left(t\right)-T_{22}\left(t\right)\right\} \overset{P}{\longrightarrow}\beta_{1}\left(-\beta_{2}^{\prime}\omega_{1}+\beta_{2}\omega_{2}\right)\mathbb{I}_{t\leq\tau}+\alpha_{1}\omega_{2}\mathbb{I}_{t>\tau}
	\end{align*} Combining these two terms, we obtain $\frac{t\left(n-t\right)}{dn^{2}}\mathcal{D}_{n}\left(\mathcal{X}_{t},\mathcal{Y}_{t}\right) \stackrel{P}{\rightarrow}f(t,\tau)$, where
	\begin{align*}
		& f(t,\tau)=\begin{cases}
			\frac{t\left(n-t\right)}{n^{2}}\beta_{1}^{2}\left(\omega_{1}^{2}+\left(\beta_{2}\omega_{2}-\beta_{2}^{\prime}\omega_{1}\right)^{2}\right) & t<\tau\\
			\frac{\tau\left(n-\tau\right)}{n^{2}}\left(\omega_{1}^{2}+\omega_{2}^{2}\right) & t=\tau\\
			\frac{t\left(n-t\right)}{n^{2}}\alpha_{1}^{2}\left(\left(\alpha_{2}\omega_{1}-\alpha_{2}^{\prime}\omega_{2}\right)^{2}+\omega_{2}^{2}\right) & t>\tau
		\end{cases}.
	\end{align*}
	Now we show that $f\left(t;\tau\right)$
	is maximized at $t=\tau$. For $t<\tau$, note that all $\beta_{i}$'s lie
	in the interval $\left(0,1\right)$. Hence,
	\begin{align*}
		f\left(\tau;\tau\right)-f\left(t;\tau\right)
		& = \frac{\tau\left(n-\tau\right)}{n^{2}}\left(\omega_{1}^{2}+\omega_{2}^{2}\right)-\frac{t\left(n-t\right)}{n^{2}}\beta_{1}^{2}\left(\omega_{1}^{2}+\left(\beta_{2}\omega_{2}-\beta_{2}^{\prime}\omega_{1}\right)^{2}\right)\\
		& >\frac{\tau\left(n-\tau\right)}{n^{2}}\left(\omega_{1}^{2}+\omega_{2}^{2}\right)-\frac{t\left(n-t\right)}{n^{2}}\beta_{1}^{2}\left(\omega_{1}^{2}\left(2-\beta_{2}\right)+\beta_{2}\omega_{2}^{2}\right)\\
		& \text{(applying Jensen's inequality)}\\
		& =\frac{\left(n-\tau\right)t}{n^{2}}\bigg[\omega_{1}^{2}\left\{ \frac{\tau}{t}-\frac{n-\tau}{n-t}\left(2-\frac{n-\tau-1}{n-t-1}\right)\right\} \\
		& \qquad \qquad \qquad + \omega_{2}^{2}\left\{ \frac{\tau}{t}-\frac{n-\tau}{n-t}\left(\frac{n-\tau-1}{n-t-1}\right)^{2}\right\} \bigg]>0.
	\end{align*}
	Similarly, we can show that $f\left(\tau;\tau\right)-f\left(t;\tau\right)>0$ for $t>\tau$. Hence,
	%from continuous mapping theorem that 
	\begin{align*}
		& \hat{T}_{n}=\hspace{-0.1in}\underset{t\in\left\{ \floor{n\delta_{n}},\dots,\floor{n\left(1-\delta_{n}\right)}\right\} } {\text{argmax}}\frac{t\left(n-t\right)}{dn^{2}}\mathcal{D}_{n}\left(\mathcal{X}_{t},\mathcal{Y}_{t}\right)\overset{\P}{\longrightarrow}
		\hspace{-0.1in} \underset{t\in\left\{ \floor{n\delta_{n}},\dots,\floor{n\left(1-\delta_{n}\right)}\right\} }{\text{argmax}}f\left(t;\tau\right)=\tau
	\end{align*}

	\noindent (b)
	Let $\widehat S_n:=\max_{t\in\mathcal A_n} w(t)\,\mathcal D_n(\mathcal X_t,\mathcal Y_t)$ be the test statistic,
	where $w(t)=t(n-t)\big/n^2$ and $\mathcal{A}_n = \{\lfloor n\delta_n\rfloor, \ldots, \lceil n(1-\delta_n)\rceil\}$.
	For a permutation $\pi$ of $\{1,\ldots,n\}$, let $\widehat{S}_n^{(\pi)}$ be the permutation analog of $\widehat{S}_n$. For any fixed $t\in\{2,\ldots,n-2\}$ and the permutation $\pi$, let $k$ be the number of $\F_1$ observations among
	$\mathcal{X}_t^\pi= \{Z_{\pi(1)},\ldots,Z_{\pi(t)}\}$. Define
	\[
	w_{11}^{X}:=\binom{a}{2}\Big/\binom{t}{2},\quad
	w_{22}^{X}:=\binom{b}{2}\Big/\binom{t}{2},\quad
	w_{12}^{X}:=ab\Big/\binom{t}{2},
	\]
	\[
	w_{11}^{Y}:=\binom{c}{2}\Big/\binom{n-t}{2},\quad
	w_{22}^{Y}:=\binom{d}{2}\Big/\binom{n-t}{2},\quad
	w_{12}^{Y}:=cd\Big/\binom{n-t}{2},
	\]
	\[
	w_{11}^{XY}:=ac\Big/\{t(n-t)\},\quad
	w_{22}^{XY}:=bd\Big/\{t(n-t)\},\quad
	w_{12}^{XY}:=1-w_{11}^{XY}-w_{22}^{XY},
	\]
	where $a:=k, b:=t-k, c:=\tau-k \text{ and } d:=n-t-\tau+k.$
	% so that $a+b=t$, $c+d=n-t$, $a+c=\tau$, $b+d=n-\tau$. 
	% Also define
	% \[
	% p:=\frac{k}{t},\qquad q:=\frac{\tau-k}{n-t},
	% \qquad\text{so that}\qquad tp+(n-t)q=\tau,
	% \]
	% and the imbalance
	% \begin{equation}
		% \label{eq:S_def_mix_new2}
		% S(t,k):=nk-t\tau,
		% \qquad\text{so that}\qquad
		% p-q=\frac{S(t,k)}{t(n-t)}.
		% \end{equation}
	
	Note that under $H_1$ in the HDLSS regime, the scaled pairwise distances converge to constants
	$\delta_{11},\delta_{22},\delta_{12}$ depending on whether the pair is from $(\F_1,\F_1)$, $(\F_2,\F_2)$,
	or $(\F_1,\F_2)$. So as $d\to\infty$,
	\begin{equation}
		\label{eq:HDLSS_Tij_exactweights_b_new_short}
		\begin{aligned}
			\frac{1}{\sqrt d}T_{11}^{(\pi)}(t)&\longrightarrow
			w_{11}^{X}\delta_{11}+w_{22}^{X}\delta_{22}+w_{12}^{X}\delta_{12}
			=\delta_{12}-\omega_1 w_{11}^{X}-\omega_2 w_{22}^{X},\\
			\frac{1}{\sqrt d}T_{22}^{(\pi)}(t)&\longrightarrow
			w_{11}^{Y}\delta_{11}+w_{22}^{Y}\delta_{22}+w_{12}^{Y}\delta_{12}
			=\delta_{12}-\omega_1 w_{11}^{Y}-\omega_2 w_{22}^{Y},\\
			\frac{1}{\sqrt d}T_{12}^{(\pi)}(t)&\longrightarrow
			w_{11}^{XY}\delta_{11}+w_{22}^{XY}\delta_{22}+w_{12}^{XY}\delta_{12}
			=\delta_{12}-\omega_1 w_{11}^{XY}-\omega_2 w_{22}^{XY}.
		\end{aligned}
	\end{equation}
	Hence we have
	\begin{equation}\label{eq:Delta_ex}
		\begin{aligned}
			\Delta_{1,\mathrm{ex}}(t,k)
			&:= \lim_{d\to\infty}\frac{1}{\sqrt{d}}
			\bigl(T_{12}^{(\pi)}(t)-T_{11}^{(\pi)}(t)\bigr)
			= \omega_1\bigl(w_{11}^{X}-w_{11}^{XY}\bigr)
			+\omega_2\bigl(w_{22}^{X}-w_{22}^{XY}\bigr),\\[6pt]
			\Delta_{2,\mathrm{ex}}(t,k)
			&:= \lim_{d\to\infty}\frac{1}{\sqrt{d}}
			\bigl(T_{12}^{(\pi)}(t)-T_{22}^{(\pi)}(t)\bigr)
			= \omega_1\bigl(w_{11}^{Y}-w_{11}^{XY}\bigr)
			+\omega_2\bigl(w_{22}^{Y}-w_{22}^{XY}\bigr).
		\end{aligned}
	\end{equation}
	Define $p:={k}/{t} \text{ and } q:={(\tau-k)}/{(n-t)},
	$.
	% (later on we have denoted $p,q$ as $p_{\pi}(t),q_{\pi}(t)$ respectively to emphasize their dependence on both the argument $t$ and permutaiton $\pi$) 
	Now, using $a=tp$, $b=t(1-p)$, $\binom{a}{2}\Big/\binom{t}{2}=a(a-1)\Big/\{t(t-1)\}$, and similarly for $(c,d)$, we can re-write the weights as
	\[
	w_{11}^{X}=p^2-\frac{p(1-p)}{t-1},\,
	w_{22}^{X}=(1-p)^2-\frac{p(1-p)}{t-1},\,
	w_{11}^{XY}=pq,\,
	w_{22}^{XY}=(1-p)(1-q),
	\]
	\[
	w_{11}^{Y}=q^2-\frac{q(1-q)}{n-t-1},\,
	w_{22}^{Y}=(1-q)^2-\frac{q(1-q)}{n-t-1}.
	\]
	These imply
	\begin{align}
		\label{eq:Delta_ex_decomp_final}
		\Delta_{1,\mathrm{ex}}(t,k)=(p-q)(\kappa p-\omega_2)-\frac{\kappa p(1-p)}{t-1},
		\\
		\Delta_{2,\mathrm{ex}}(t,k)=(p-q)(\omega_2-\kappa q)-\frac{\kappa q(1-q)}{n-t-1},
	\end{align}
	for $\kappa=\omega_1+\omega_2$. Now, as $d\to\infty$, $\frac{1}{d}\,\widehat{S}_n^{(\pi)}\ \longrightarrow\ \max_{t\in\mathcal A_n} J_{\mathrm{ex}}(t,k),
	$ where
	\begin{equation}
		\label{thm2:J-ex-form}
		J_{\mathrm{ex}}(t,k):=w(t)\Big(\Delta_{1,\mathrm{ex}}(t,k)^2+\Delta_{2,\mathrm{ex}}(t,k)^2\Big),
	\end{equation}
	% At the segmentation-preserving configurations $(t,k)=(\tau,\tau)$ and $(t,k)=(n-\tau,0)$ we have $(p,q)=(1,0)$ or $(0,1)$,
	% so the finite-sample correction terms in \eqref{eq:Delta_ex_decomp_final} vanish and
	Recall that, the limiting value (as $d$ tends to infinity) of $\frac1d\hat{S}_n$ is
	\begin{equation}
		\label{eq:Jpure_exact_final_new}
		J_{\mathrm{pure}}:=f(\tau;\tau)
		=w(\tau)\,(\omega_1^2+\omega_2^2).\end{equation}
	Now we will show that if $\pi\notin\mathcal{B}(\rho_{n,\tau})$ then $J_{\text{ex}}(t,k)<J_\text{pure}$. In fact we will find an upper bound of $J_{ex}(t,k)$ and show that this bound is smaller than $J_\text{pure}$ if $\pi\notin\mathcal{B}(\rho_{n,\tau})$. Clearly,
	$p_n\leq\Pr(\Pi\in\mathcal{B}(\rho_{n,\tau}))\le\alpha$, where $p_n$ is the p-value of the permutation test. This implies that the change-point is detected with probability tending to $1$ as $d$ grows to infinity.
	
	To find a bound for $J_{\mathrm{ex}}(t,k)$, note that \eqref{eq:Delta_ex_decomp_final} can be re-written as $\Delta_{1,\mathrm{ex}}=A_1-B_1,\,\, \Delta_{2,\mathrm{ex}}=A_2-B_2,$ where
	$A_1:=(p-q)(\kappa p-\omega_2),\, B_1:={\kappa p(1-p)}/({t-1}),\,
	A_2:=(p-q)(\omega_2-\kappa q),\, B_2:={\kappa q(1-q)}/({n-t-1}).
	$
	Using $(x-y)^2\le x^2+y^2+2|xy|$ and summing the two squares, we get
	\[
	\Delta_{1,\mathrm{ex}}^2+\Delta_{2,\mathrm{ex}}^2
	\le (A_1^2+A_2^2)+(B_1^2+B_2^2)+2\big(|A_1||B_1|+|A_2||B_2|\big).
	\]
	%Since $p,q\in[0,1]$ and , the endpoint extrema imply
	Note that 
	$|\kappa p-\omega_2| \le \sup_{u\in[0,1]}|\kappa u-\omega_2|=\max\{|\omega_1|,|\omega_2|\}:=M_*,$ say. Similarly, we have $|\omega_2-\kappa q|
	=\sup_{u\in[0,1]}|\omega_2-\kappa u|
	=M_*$
	%:=\max\{|\omega_1|,|\omega_2|\},
	%$
	%and also  
	Therefore,
	\[
	A_1^2+A_2^2
	=(p-q)^2\Big\{(\kappa p-\omega_2)^2+(\omega_2-\kappa q)^2\Big\}
	\le 2|p-q|^2M_*^2,
	\]
	Since $0\le p(1-p),q(1-q)\le 1/4$, we also have
	\begin{align*}
		B_1^2\le \frac{\kappa^2}{16(t-1)^2},
		& \qquad B_2^2\le \frac{\kappa^2}{16(n-t-1)^2}\\
		2|A_1||B_1|\le \frac{|p-q|\kappa M_*}{2(t-1)}, &\qquad
		2|A_2||B_2|\le \frac{|p-q|\kappa M_*}{2(n-t-1)}.    
	\end{align*}
	So, from \eqref{thm2:J-ex-form}, for $t\ge 2$ and $n-t\ge 2$, we get
	\begin{align}
		\label{eq:Jex_pointwise_bound_final_new}
		& J_{\mathrm{ex}}(t,k) \le w(t)\Bigg[
		2|p-q|^2M_*^2
		+\frac{|p-q|\kappa M_*}{2}\Big\{\frac{1}{t-1}+\frac{1}{n-t-1}\Big\} \nonumber
		\\
		& 
		\qquad \qquad \qquad \qquad \qquad \qquad \qquad+ \frac{\kappa^2}{16}\Big\{\frac{1}{(t-1)^2}+\frac{1}{(n-t-1)^2}\Big\}
		\Bigg].
	\end{align}
	
	Now let $m:=\lfloor n\delta_n\rfloor$ and assume $m\ge 2$. For $t\in\mathcal A_n=\{m,\ldots,n-m\}$,
	the functions $t\mapsto t(n-t)/(t-1)$ and $t\mapsto t(n-t)/(t-1)^2$ are decreasing, Hence
	\[
	\max_{t\in\mathcal A_n}\frac{w(t)}{t-1}=\frac{w(m)}{m-1},\qquad
	\max_{t\in\mathcal A_n}\frac{w(t)}{(t-1)^2}=\frac{w(m)}{(m-1)^2},
	\]
	and the same identities hold with $t$ replaced by $n-t$. Also using $w(t)\le 1/4$, from \eqref{eq:Jex_pointwise_bound_final_new}
	we get the following uniform bound for all $t\in\mathcal A_n$ and $k=pt$.
	\begin{equation}
		\label{eq:Jex_bound_uniform_final_new}
		J_{\mathrm{ex}}(t,k)
		\le
		\frac12 M_*^2|p-q|^2
		+
		\Big(\frac{\kappa M_*\,w(m)}{m-1}\Big)|p-q|
		+
		\frac{\kappa^2}{8}\frac{w(m)}{(m-1)^2}.
	\end{equation}
	
	Now, for any fix $\rho\in(0,1)$, call a configuration $(t,k)$ \emph{good} if $|p-q|\le \rho$. So, for  all good configurations with $t\in\mathcal A_n$, we have 
	\begin{align*}
		J_{\mathrm{ex}}(t,k)\le \frac12 M_*^2\rho^2 + B_n\rho + C_n,\\ 
		\text{where }
		B_n:=\frac{\kappa M_*\,w(m)}{m-1}
		\text{ and }
		C_n:=\frac{\kappa^2}{8}\frac{w(m)}{(m-1)^2}.
	\end{align*}
	So, if $\frac12 M_*^2\rho^2 + B_n\rho + C_n < J_{\mathrm{pure}}$, then we have $J_{\mathrm{ex}}(t,k) < J_{\mathrm{pure}}$
	holds all $t\in\mathcal A_n$. Clearly, this is satisfied if $\rho<\rho_{0}$, where 
	%$\rho$ satisfies
	%\begin{equation}
	%\label{eq:rho_condition_exact_final_new}
	%\end{equation}
	%then the strict inequality
	%\begin{equation}
	%\label{eq:strict_under_mixing_final_new}
	%\text{ and }k\text{ such that }|p-q|\le\rho.
	%\end{equation}
	%Define the exact-limit mixing threshold $\rho_{n,\tau}$ as the largest $\rho\in[0,1]$ for which
	%\eqref{eq:rho_condition_exact_final_new} %holds. Solving the quadratic inequality gives
	\begin{equation}
		\label{eq:rho_threshold_def_final_new}
		\rho_{0}
		=
		\left(
		\frac{-B_n+\sqrt{B_n^2+2M_*^2\big(J_{\mathrm{pure}}-C_n\big)}}{M_*^2}
		\right)\wedge 1.
	\end{equation}
	%with the convention $\rho$  if $J_{\mathrm{pure}}\le C_n$.
	However, $\rho_0$ depends on $\omega_1=\delta_{12}-\delta_{11}$ and $\omega_2=\delta_{12}-\delta_{22}$. 
	So, now we look for a universal upper bound, which is free from all population quantities. %hence The threshold \eqref{eq:rho_threshold_def_final_new} admits clean parameter-free bounds under %\omega_1,\omega_2\ge 0$.
	Note that $\kappa\le 2M_*$ and $M_*^2\le \omega_1^2+\omega_2^2\le 2M_*^2$. Therefore,
	\[
	\frac{B_n}{M_*^2}=\frac{\kappa}{M_*}\frac{w(m)}{m-1}
	%\in\Big[\frac{w(m)}{m-1},
	\le \frac{2w(m)}{m-1}, \quad %\Big],
	%\]\[
	\frac{C_n}{M_*^2}=\Big(\frac{\kappa}{M_*}\Big)^2\frac{w(m)}{8(m-1)^2}
	%\in\Big[\frac{w(m)}{8(m-1)^2},
	\le \frac{w(m)}{2(m-1)^2}%\Big],
	\]
	and ${J_{\mathrm{pure}}}/{M_*^2} = w(\tau)({\omega_1^2+\omega_2^2})/{M_*^2} \ge w(\tau)$.
	%\in[w(\tau),2w(\tau)].$
	Using the fact that $x\mapsto -a+\sqrt{a^2+2(x-b)}$ is increasing in $x$ 
	and decreasing in both $a$ and $b$, we get
	%\eqref{eq:rho_threshold_def_final_new} yields the sandwich
	%\begin{equation}
	%\label{eq:rho_sandwich_pf_final_new}
	$\rho_0 >\rho_{n,\tau}$, where %$\ \le\ \rho \ \le\ \rho^+_{n,\tau},
	%\end{equation}
	%where $\rho_{n,\tau}$ is as defined in %\eqref{eq:rho_def} of the main text, 
	%i.e.,
	%\begin{equation}
	%\label{eq:rho_lower_pf_final_new}
	$\rho_{n,\tau}
	:=
	\Bigg(
	-2\,\frac{w(m)}{m-1}
	+\sqrt{
		4\Big(\frac{w(m)}{m-1}\Big)^2
		+2\Big\{
		w(\tau)-\frac{w(m)}{2(m-1)^2}
		\Big\}}
	\Bigg)_+\ \wedge\ 1$.
	%\end{equation}
	%and the upper bound is
	%\begin{equation}
	%\label{eq:rho_upper_pf_final_new}
	%\rho^+_{n,\tau}
	%:=
	%\Bigg(
	%-\frac{w(m)}{m-1}
	%+\sqrt{
		%\Big(\frac{w(m)}{m-1}\Big)^2
		%+2\Big\{
		%2w(\tau)-\frac{w(m)}{8(m-1)^2}
		%\Big\}}
	%\Bigg)\ \wedge\ 1,
	%\end{equation}
	%with equality in \eqref{eq:rho_sandwich_pf_final_new} in the 
	%zero-signal case $\omega_1=\omega_2=0$ 
	%(equivalently $\delta_{11}=\delta_{12}=\delta_{22}$).
	%Since $\rho_{n,\tau}$ is the smallest value of $\rho$ for which the 
	%strict separation \eqref{eq:strict_under_mixing_final_new} holds, 
	%any $\rho\ge\rho_{n,\tau}$ also satisfies that separation. 
	So, the condition $\rho <\rho_{n,\tau}$ (as stated in the theorem) ensures $\rho<\rho_0$ and hence if $\pi \notin \mathcal{B}(\rho_{n,\tau})$, we have $J_\mathrm{ex}(t,k)<J_\mathrm{pure}$.
	\end{proof}
	
	\begin{proof}[\textbf{Proof of Lemma~\ref{lem:sarkar}.}]
The proof follows directly from Lemma~1 of \cite{sarkar2018some}.
In that paper, the authors consider kernels of the form
$\psi(|x-y|^{2})$, whereas we work with $\psi(|x-y|)$.
Define $\tilde{\psi}(u)=\psi(\sqrt{u})$ for $u\ge0$.
Then
\[
\varphi_{h,\psi}(\xvec,\yvec)
= h\!\left\{\frac{1}{d}\sum_{k=1}^{d}\tilde{\psi}\!\big(|x^{(k)}-y^{(k)}|^{2}\big)\right\},
\]
so that our kernel is covered by the framework of \cite{sarkar2018some}.
Moreover, for $u>0$,
\[
\tilde{\psi}'(u)=\frac{\psi'(\sqrt{u})}{2\sqrt{u}},
\]
and hence the assumption that $t\mapsto \psi'(t)/t$ is non-constant completely
monotone implies that $\tilde{\psi}'$ is non-constant completely monotone.
Therefore, all conditions of Lemma~1 in \cite{sarkar2018some} are satisfied.
Consequently,
$\mathcal{E}^{(d)}_{h,\psi}(\F_1,\F_2)\ge0$, with equality if and only if
$\F_1$ and $\F_2$ have identical one-dimensional marginal distributions.
\end{proof}

\begin{proof}[\textbf{Proof of Theorem~\ref{thm:HDLSS-gen-consistency}.}]
Define $\Wvec=\Zvec_i-\Zvec_j$, where $\Zvec_i\sim\F_r$ and $\Zvec_j\sim\F_s$ are independent. By Assumption~(A4), $\mathrm{Var}(d^{-1}\sum_{q=1}^d \psi(|W^{(q)}|))\to 0$ as $d\to\infty$ and hence WLLN holds for the sequence $\{\psi(|W^{(q)}|):q\ge 1\}$. Since $h$ is continuous, this implies
\[
\varphi_{h,\psi}(\Zvec,\Zvec^*) - \phi^*_{h,\psi}(\F_r,\F_s)\overset{P}{\longrightarrow} 
0\qquad \text{as } d\to\infty.
\]
% Since $\phi^*_{h,\psi}(\F_r,\F_s)$ is already defined as the deterministic 
% quantity $h\{d^{-1}\sum_{q=1}^d \E\psi(|Z^{(q)}-Z_*^{(q)}|)\}$, we
Let $\delta_{rs}^{h,\psi}$ be the limiting value of $\phi^*_{h,\psi}(\F_r,\F_s)$ as $d\to\infty$. So under the given condition, we have $\varphi_{h,\psi}(\Zvec_i,\Zvec_j)\overset{P}{\to}\delta^{h,\psi}_{rs}$.
%$\lim_{d\to\infty}\mathcal{E}^{(d)}_{h,\psi}(\F_1,\F_2) = 
% 2\delta^{h,\psi}_{12}-\delta^{h,\psi}_{11}-\delta^{h,\psi}_{22}>0$.
Therefore, for any $t$ we have
\begin{align*}
	& \frac{1}{\sqrt{d}}T^{h,\psi}_{12}\left(t\right)\overset{P}{\longrightarrow}\begin{cases}
		\frac{1}{\left(n-t\right)}\left(\left(\tau-t\right)\delta^{h,\psi}_{11}+\left(n-\tau\right)\delta^{h,\psi}_{12}\right) & t\leq\tau\\
		\frac{1}{t}\left(\tau\delta^{h,\psi}_{12}+\left(t-\tau\right)\delta^{h,\psi}_{22}\right) & t>\tau
	\end{cases}\\
	& \frac{1}{\sqrt{d}}T^{h,\psi}_{11}\left(t\right)\overset{P}{\longrightarrow}\begin{cases}
		\delta^{h,\psi}_{11} & t\leq\tau\\
		{t \choose 2}^{-1}\left({\tau \choose 2}\delta^{h,\psi}_{11}+{t-\tau \choose 2}\delta^{h,\psi}_{22}+\tau\left(t-\tau\right)\delta^{h,\psi}_{12}\right) & t>\tau
	\end{cases}\\
	& \frac{1}{\sqrt{d}}T^{h,\psi}_{22}\left(t\right)\overset{P}{\longrightarrow}\begin{cases}
		{n-t \choose 2}^{-1}\bigg({\tau-t \choose 2}\delta^{h,\psi}_{11}+{n-\tau \choose 2}\delta^{h,\psi}_{22}+\left(n-\tau\right)\left(\tau-t\right)\delta^{h,\psi}_{12}\bigg) & t\leq\tau\\
		\delta^{h,\psi}_{22} & t>\tau
	\end{cases}
\end{align*}
% $T^{h,\psi}_{22}(t)$, $T^{h,\psi}_{12}(t)$ are averages of 
% $\varphi_{h,\psi}(\Zvec_i,\Zvec_j)$ over within- and between-block 
% pairs. Since  whenever $\Zvec_i\sim\F_r$ and $\Zvec_j\sim\F_s$,
% by the same composition-weight argument as in 
% Theorem~\ref{proof_thm_2}, for instance when $t\leq\tau$,
% $T^{h,\psi}_{12}(t)\overset{P}{\to}
% \tfrac{1}{n-t}\bigl((\tau-t)\,\delta^{h,\psi}_{11}
% +(n-\tau)\,\delta^{h,\psi}_{12}\bigr)$,
% and analogous weighted combinations hold for $T^{h,\psi}_{11}(t)$ 
% and $T^{h,\psi}_{22}(t)$, and for $t>\tau$, with weights determined 
% by how many pre- and post-change indices fall on each side of the 
% split. At $t=\tau$ the blocks are internally homogeneous and all 
% three limits collapse to $\delta^{h,\psi}_{11}$, $\delta^{h,\psi}_{22}$ 
% and $\delta^{h,\psi}_{12}$ respectively, while for $t\neq\tau$ 
% cross-distribution contamination strictly reduces the contrast. 
Since $\lim_{d\to\infty}\mathcal{E}^{(d)}_{h,\psi}(\F_1,\F_2) = 
2\delta^{h,\psi}_{12}-\delta^{h,\psi}_{11}-\delta^{h,\psi}_{22}>0$, the proofs of part~(a) and part~(b) follows from the same argument as in the  proofs of part~(a) and part~(b) of Theorem~\ref{thm:HDLSS-consistency} with 
$(\delta^{h,\psi}_{11},\delta^{h,\psi}_{12},\delta^{h,\psi}_{22})$ 
replaced by $(\delta_{11},\delta_{12},\delta_{22})$.
% Part~(b) follows by the same HDLSS permutation argument as in 
% proof of Theorem~\ref{thm:HDLSS-consistency}, 
\end{proof}

\begin{proof}[\textbf{Proof of Theorem~\ref{thm:sparse1}.}]
\label{proof_thm_4}
Define $Z_d(i,j):=d^{-1}\|\Zvec_i-\Zvec_j\|^2$ for each pair
$(i,j)$. Under (A1)--(A2),
$\Var(Z_d(i,j))=O(v_d^2/d^2)$, so
$Z_d(i,j)=\E Z_d(i,j)+O_P(v_d/d)$.
The expectations take the form
$\E Z_d(i,j)=2\,\tr(\Sigma_{1,d})/d$ if $i,j\le\tau$,
$\E Z_d(i,j)=2\,\tr(\Sigma_{2,d})/d$ if $i,j>\tau$, and
$\E Z_d(i,j)=(\tr(\Sigma_{1,d})+\tr(\Sigma_{2,d})+M_d)/d$
otherwise, where $M_d:=\|\mu_{1,d}-\mu_{2,d}\|^2$. Define
\begin{equation}
	\label{delta_ij:def}
	\delta_{11}(d):=\sqrt{\tfrac{2\,\tr(\Sigma_{1,d})}{d}},\quad
	\delta_{22}(d):=\sqrt{\tfrac{2\,\tr(\Sigma_{2,d})}{d}},\quad
	\delta_{12}(d):=\sqrt{\tfrac{\tr(\Sigma_{1,d})+\tr(\Sigma_{2,d})+M_d}{d}},
\end{equation}
so that $\delta_{rs}^2(d)=\E Z_d(i,j)$ for $\Zvec_i\sim\F_r$,
$\Zvec_j\sim\F_s$ independent.
Using the identity
$\sqrt{a}-\sqrt{b}=(a-b)/(\sqrt{a}+\sqrt{b})$, we get
\[
\frac{\|\Zvec_i-\Zvec_j\|}{\sqrt{d}}-\delta_{rs}(d)
\;=\;\frac{Z_d(i,j)-\delta_{rs}^2(d)}
{\|\Zvec_i-\Zvec_j\|/\sqrt{d}+\delta_{rs}(d)}.
\]
The numerator is $O_P(v_d/d)$. Since $M_d\ge 0$ implies
$\delta_{12}(d)\ge\min\{\delta_{11}(d),\delta_{22}(d)\}$, all three
$\delta_{rs}(d)$'s are at least
$\sqrt{2\min\{\tr(\Sigma_{1,d}),\tr(\Sigma_{2,d})\}/d}$.
Under the assumption
$$\liminf_{d\to\infty}\min\{\tr(\Sigma_{1,d}),
\tr(\Sigma_{2,d})\}/d>0$$, for all large $d$ one has
$\min_r\tr(\Sigma_{r,d})\ge c\,d$ for some $c>0$, and hence
the denominator satisfies
$\|\Zvec_i-\Zvec_j\|/\sqrt{d}+\delta_{rs}(d)\ge\min\{\delta_{11}(d),\delta_{12}(d),
\delta_{22}(d)\}\ge\sqrt{2c}>0$ uniformly in $d$.
Hence, uniformly over all $\binom{n}{2}$ pairs and all
$t\in\mathcal{A}_n$,
\begin{equation}\label{eq:pw_approx}
	\frac{\|\Zvec_i-\Zvec_j\|}{\sqrt{d}}
	\;=\;\delta_{rs}(d)+O_P\!\left(\frac{v_d}{d}\right).
\end{equation}

Next, define
$\omega_1(d):=\delta_{12}(d)-\delta_{11}(d)$,\;
$\omega_2(d):=\delta_{12}(d)-\delta_{22}(d)$,
$\Delta_d:=\tr(\Sigma_{2,d})-\tr(\Sigma_{1,d})$,\;
$r_d:=\sqrt{\omega_1^2(d)+\omega_2^2(d)}$,\;
and $s_d:=d\,r_d^2$.
Using the identity $a-b=(a^2-b^2)/(a+b)$, from~\eqref{delta_ij:def},
we get
\[
\omega_1(d)=\frac{M_d+\Delta_d}
{d\,(\delta_{12}(d)+\delta_{11}(d))},\qquad
\omega_2(d)=\frac{M_d-\Delta_d}
{d\,(\delta_{12}(d)+\delta_{22}(d))}.
\]
Now, note that under~(A1), the coordinate-wise fourth moments of
$\Zvec_i-\Zvec_j$ are uniformly bounded. So there exists $C>0$
such that $\tr(\Sigma_{1,d})\le Cd$, $\tr(\Sigma_{2,d})\le Cd$,
and $M_d\le Cd$ for all $d\ge 1$. Consequently,
from~\eqref{delta_ij:def}, $\max\{\delta_{11}(d),\delta_{12}(d),
\delta_{22}(d)\}\le C_0$ for some $C_0>0$ and all $d\ge 1$.

Since $|M_d+\Delta_d|+|M_d-\Delta_d|\ge 2\max\{M_d,|\Delta_d|\}$
and $\delta_{12}(d)+\delta_{rr}(d)\le 2C_0$ for $r=1,2$, we have
\[
r_d\;\ge\;\frac{|\omega_1(d)|+|\omega_2(d)|}{\sqrt{2}}
\;\ge\;
\frac{\max\{M_d,|\Delta_d|\}}
{\sqrt{2}\,C_0\,d},
\]
and therefore
\begin{equation}\label{eq:snr}
	\frac{v_d/d}{r_d}
	\;\le\;
	\frac{\sqrt{2}\,C_0\,v_d}
	{\max\{M_d,|\Delta_d|\}}.
\end{equation}
Since $M_d/v_d\to\infty$ or $|\Delta_d|/v_d\to\infty$ by hypothesis,
the right side of~\eqref{eq:snr} tends to zero.
Hence $v_d/d=o(r_d)$.

Using the same argument as in
proof of Theorem ~\ref{thm:HDLSS-consistency} and ~\eqref{eq:pw_approx}, we get uniformly over $t\in\mathcal{A}_n$,
\begin{align}
	\label{T_ij-contrasts}
	\nonumber
	\frac{T_{12}(t)-T_{11}(t)}{\sqrt{d}}
	&=\beta_1\omega_1(d)\,\mathbb{I}_{\{t\le\tau\}}
	+\alpha_1(\alpha_2\omega_1(d)-\alpha_2'\omega_2(d))\,
	\mathbb{I}_{\{t>\tau\}}
	+O_P\!\left(\frac{v_d}{d}\right),\\[4pt]
	\frac{T_{12}(t)-T_{22}(t)}{\sqrt{d}}
	&=\beta_1(\beta_2\omega_2(d)-\beta_2'\omega_1(d))\,
	\mathbb{I}_{\{t\le\tau\}}
	+\alpha_1\omega_2(d)\,\mathbb{I}_{\{t>\tau\}}
	+O_P\!\left(\frac{v_d}{d}\right).
\end{align}
Note that $\omega_1(d)/r_d$ and $\omega_2(d)/r_d$ are bounded ($\omega_1^2(d)+\omega_2^2(d)=r^2_d ~\forall d\ge1$). Dividing both sides \ref{T_ij-contrasts} by $r_d$ and 
%using $v_d/d=o(r_d)$, the remainders
taking their sum of squares, we have 
\[
\sup_{t\in\mathcal{A}_n}\left|
\frac{w(t)\,\mathcal{D}_n(t)}{s_d}
-w(t)\,G_d(t;\tau)\right|=o_P(1),
\]
where $s_d=d\,r_d^2$ and
\[
G_d(t;\tau):=
\begin{cases}
	\displaystyle
	\beta_1^2\,
	\frac{\omega_1^2(d)+(\beta_2\omega_2(d)-\beta_2'\omega_1(d))^2}
	{\omega_1^2(d)+\omega_2^2(d)},
	& t\le\tau,\\[12pt]
	\displaystyle
	\alpha_1^2\,
	\frac{(\alpha_2\omega_1(d)-\alpha_2'\omega_2(d))^2+\omega_2^2(d)}
	{\omega_1^2(d)+\omega_2^2(d)},
	& t>\tau,
\end{cases}
\]
with the same weights $\alpha_1,\alpha_2,\beta_1,\beta_2$
% $\alpha_1=\tau/t$, $\beta_1=(n-\tau)/(n-t)$,
% $\alpha_2=(\tau-1)/(t-1)$, $\beta_2=(n-\tau-1)/(n-t-1)$,
% $\alpha_2'=1-\alpha_2$, $\beta_2'=1-\beta_2$
as in proof of Theorem ~\ref{thm:HDLSS-consistency}. 
% Now with the current
% $(\omega_1(d),\omega_2(d))$ in place of fixed HDLSS limits.

At $t=\tau$: $\beta_1=1$, $\beta_2=1$, $\beta_2'=0$, giving
$G_d(\tau;\tau)=1$. For $t\neq\tau$, one can show that $w(t)\,G_d(t;\tau)<w(\tau)$ for every
$(\omega_1(d),\omega_2(d))/r_d\in \mathcal{S}^1$, where $\mathcal{S}^1$ denotes the unit circle in $\mathbb{R}^2$ with the  center at the origin (follows using similar arguments as used in proving $f(t;\tau)<f(\tau;\tau)$ for all $t\neq\tau$ in the proof of Theorem \ref{thm:HDLSS-consistency}). Now, since $\mathcal{A}_n$ is finite,
%  and $S^1$ is compact, the separation gap
% $\eta:=w(\tau)-\max_{t\neq\tau}w(t)\,G_d(t)>0$ depends only on
% $(n,\tau)$. This gives
% $w(\tau)\,D_n(\tau)/s_d
% -\max_{t\neq\tau}w(t)\,D_n(t)/s_d\ge\eta+o_P(1)$,
it directly implies $P(\widehat{T}_n=\tau)\to 1$ as $d\to\infty$.

Part~(b) follows from similar arguments as used in proving the consistency of the permutation test in the proof of 
Theorem~\ref{thm:HDLSS-consistency}. 
%The expressions~\ref{eq:Delta_ex}
% in that proof depend solely on the signal differences $\omega_1(d),\omega_2(d)$,
% and the same holds here with scale~$s_d$: homogeneity of the
% permuted objective ensures that for every
% $\pi\notin\mathcal{B}(\rho_{n,\tau})$, the permuted scan is strictly
% below $w(\tau)\,s_d+o_P(s_d)$, giving
% $p_n\le|\mathcal{B}(\rho_{n,\tau})|/n!$ with probability tending
% to~$1$.
\end{proof}

\begin{proof}[\textbf{Proof of Theorem~\ref{thm:sparse2}}]
Since $h(t)=t$ for both Proposed-$\ell_1$ ($\psi(t)=t$) and
Proposed-exp ($\psi(t)=1-e^{-t}$), the generalised distance simplifies to
$\varphi_{h,\psi}(\Zvec_i,\Zvec_j)
= d^{-1}\sum_{q=1}^{d}\psi(|Z_i^{(q)}-Z_j^{(q)}|)$.
For $\Zvec_i\sim\mathrm{F}_r$ and $\Zvec_j\sim\mathrm{F}_s$
($r,s\in\{1,2\}$) independent, define
\begin{equation}\label{eq:ars}
	a_{rs}(d) \;:=\;
	\E\!\left[\frac{1}{d}\sum_{q=1}^{d}
	\psi\!\bigl(|Z_i^{(q)}-Z_j^{(q)}|\bigr)\right],
\end{equation}
and set $ \omega_1^\psi(d) := a_{12}(d)-a_{11}(d),\,
\omega_2^\psi(d) := a_{12}(d)-a_{22}(d),$ $
s_d^\psi        := \bigl(\omega_1^\psi(d)\bigr)^2
+\bigl(\omega_2^\psi(d)\bigr)^2,\,
r_d^\psi        := \sqrt{s_d^\psi}. $
This implies $\mathcal{E}_\psi(\mathrm{F}_1,\mathrm{F}_2)
= d\,(\omega_1^\psi(d)+\omega_2^\psi(d))$. So.
$\mathcal{E}_\psi(\mathrm{F}_1,\mathrm{F}_2)/v_{\psi,d}\to\infty$ implies $d(|\omega_1^\psi(d)|+|\omega_2^\psi(d)|)/v_{\psi,d}\to\infty$ as $d \rightarrow \infty$. Since $r_d^\psi \ge (|\omega_1^\psi(d)|+|\omega_2^\psi(d)|)/\sqrt{2}$,
this implies $v_{\psi,d}/(d\,r_d^\psi)\to 0$ as $d\rightarrow\infty$.

Now, Assumption~(A4)(a) bounds the second moments of
$\psi(|W^{(q)}|)$ uniformly in $q$, while
condition~(A4)(b) ensures $\sum_{q\ne q'}|\mathrm{Corr}(\psi(|W^{(q)}|),\psi(|W^{(q')}|))|=o(d^2)$ for $\Wvec=\Zvec_i-\Zvec_j$. So, WLLN holds for $\{\psi(|W^{(q)}|):q\ge 1\}$. Hence, for every fixed pair $(i,j)$,
\begin{equation}\label{eq:wlln5}
	\frac{1}{d}\sum_{q=1}^{d}\psi\!\bigl(|Z_i^{(q)}-Z_j^{(q)}|\bigr)
	\;=\; a_{rs}(d) \;+\; O_P\!\Bigl(\tfrac{v_{\psi,d}}{d}\Bigr),
\end{equation}
uniformly over all $\tbinom{n}{2}$ pairs. So dividing~\eqref{eq:wlln5} by $r_d^\psi$ makes the remainder
$o_P(1)$. This in turn implies 
\begin{align}
	\label{T_ij-contrasts-thm5}
	\nonumber
	T_{12}^{h,\psi}(t)-T_{11}^{h,\psi}(t)
	&=\beta_1\omega_1^\psi(d)\,\mathbb{I}_{\{t\le\tau\}}
	+\alpha_1\bigl(\alpha_2\omega_1^\psi(d)-\alpha_2'\omega_2^\psi(d)\bigr)\,
	\mathbb{I}_{\{t>\tau\}}
	+O_P\!\left(\frac{v_{\psi,d}}{d}\right),\\[4pt]
	T_{12}^{h,\psi}(t)-T_{22}^{h,\psi}(t)
	&=\beta_1\bigl(\beta_2\omega_2^\psi(d)-\beta_2'\omega_1^\psi(d)\bigr)\,
	\mathbb{I}_{\{t\le\tau\}}
	+\alpha_1\omega_2^\psi(d)\,\mathbb{I}_{\{t>\tau\}}
	+O_P\!\left(\frac{v_{\psi,d}}{d}\right).
\end{align}
% Exactly as in the proof of Theorem~4, the pair-type decomposition
% expresses $T_{12}^{h,\psi}(t)-T_{11}^{h,\psi}(t)$ and
% $T_{12}^{h,\psi}(t)-T_{22}^{h,\psi}(t)$
% in terms of $\omega_1^\psi(d),\omega_2^\psi(d)$ and the finite-$n$
% 
where $\alpha_1,\alpha_2,\alpha_2',\beta_1,\beta_2,\beta_2'$ are as in proof of Theorems \ref{thm:HDLSS-consistency} and \ref{thm:sparse1}.
Dividing both sides of \eqref{T_ij-contrasts-thm5} by $r_d^\psi$ and taking the sum of their squares, yields
\[
\sup_{t\in\mathcal{A}_n}\left|
\frac{w(t)\,\mathcal{D}_n^{h,\psi}(t)}{s^\psi_d}
-w(t)\,G_d^{h,\psi}(t;\tau)\right|=o_P(1),
\]
where $G_d^{h,\psi}(t;\tau)$ is similar to $G_d(t;\tau)$ (see the proof of Theorem \ref{thm:sparse1}) with $\omega_i^\psi(d)$'s replacing $\omega_i(d)$. Here also one can show that $w(t)G_d^{h,\psi}(t;\tau)<w(\tau)G_d^{h,\psi}(\tau;\tau)$ for all $t\neq\tau$. Rest of the proof follows from the proofs of part (a) and (b) of Theorem \ref{thm:sparse1}.
% At $t=\tau$ one has
% $G_d^{h,\psi}(\tau)=1$ for every direction
% $(\omega_1^\psi(d),\omega_2^\psi(d))/r_d^\psi\in S^1$,
% while for every $t\ne\tau$ the same algebraic comparison as in
% Theorem~4 gives $w(t)\,G_d^{h,\psi}(t)<w(\tau)$, where
% $w(t)=t(n-t)/n^2$.  Since $\mathcal{A}_n$ is finite,
% the separation gap
% $\eta := w(\tau)-\max_{t\ne\tau,\,t\in\mathcal{A}_n}
% w(t)\,G_d^{h,\psi}(t)>0$
% depends only on $(n,\tau)$.  Therefore,
% \[
%   \frac{w(\tau)\,D_n^{h,\psi}(\tau)}{s_d^\psi}
%   \;-\;
%   \max_{\substack{t\ne\tau \\ t\in\mathcal{A}_n}}
%   \frac{w(t)\,D_n^{h,\psi}(t)}{s_d^\psi}
%   \;\ge\; \eta \;+\; o_P(1),
% \]
% which gives $P\!\bigl(\widehat{T}_n^{h,\psi}=\tau\bigr)\to 1$
% as $d\to\infty$, proving part~(a).
% For part~(b), under $\mathrm{H}_0$ the exact level~$\alpha$ property
% follows from exchangeability of $(\Zvec_1,\ldots,\Zvec_n)$.
% Under $\mathrm{H}_1$, the same permutation argument as in Theorem~4,
% applied at scale $s_d^\psi$, shows that for every permutation
% $\pi\notin\mathcal{B}(\rho_{n,\tau}^*)$ the degree-zero homogeneity of
% $G_d^{h,\psi}$ forces the permuted scan statistic
% $\widehat{S}_n^{h,\psi,(\pi)}$ to lie strictly below
% $w(\tau)\,s_d^\psi+o_P(s_d^\psi)$.
% Hence the permutation $p$-value satisfies
% $p_n\le|\mathcal{B}(\rho_{n,\tau})|/n!$ with probability tending
% to~$1$, and the condition $|\mathcal{B}(\rho_{n,\tau})|/n!\le\alpha$
% then implies that $\widehat{T}_n^{h,\psi}$ is detected as the
% change-point at level~$\alpha$ with probability converging to~$1$.
\end{proof}

\begin{proof}[\textbf{Proof of Theorem~\ref{thm:HDHSS 1}}]
%We consider the high-dimensional asymptotic regime where the dimension $d \to \infty$ and the sample size $n = n(d) \to \infty$. 
When $\F_1,\F_2$ are sub-exponential, for two independent random variables $\Zvec_i\sim F_r$ and $\Zvec_j\sim F_s$ ($r,s\in\{1,2\}$) there exists a constant $K>0$ such that for all $i\neq j$ (see Lemma A.~\ref{sub-exponential:conc})
\[
\sup_{\|v\|_2=1}\ \|\langle v,\Zvec_i-\Zvec_j\rangle\|_{\psi_1}\le K.
\]
From Lemma A.~\ref{sub-exponential:conc}, we can also conclude that $\|V_{ij}\|_{\psi_1}\le K_0\sqrt d$ for all $i\neq j$, where $V_{ij}=\|\Zvec_i-\Zvec_j\|$ and $K_0$ is a constant depending only on $K$. Applying Lemma A.~\ref{lem:block+perm-U}
to $\Delta_{ij}(t):=T_{ij}(t)-\mu_{ij}(t)$ for $i,j\in\{1,2\}$ and $r>0$, we get
\begin{align}
	\P\big(|\Delta_{11}(t)|\ge r\big)&\le 2\exp\Big\{-c_0\,t\,\phi\!\Big(\frac{r}{K_0\sqrt d}\Big)\Big\},\nonumber\\
	\P\big(|\Delta_{22}(t)|\ge r\big)&\le 2\exp\Big\{-c_0\,(n-t)\,\phi\!\Big(\frac{r}{K_0\sqrt d}\Big)\Big\}~~\text{and }\nonumber\\
	\P\big(|\Delta_{12}(t)|\ge r\big)&\le 2\exp\Big\{-c_0\,m_t\,\phi\!\Big(\frac{r}{K_0\sqrt d}\Big)\Big\}, \label{Delta_concentration}
\end{align}
where $\phi(u):=\min\{u^2,u\}$ and $m_t=\min\{t,n-t\}$. 

For ease of notation, let us denote $\mathcal{D}_n(\X_t,\Y_t)$ by $D(t)$ and its population analog $\{\mu_{12}(t)-\mu_{11}(t)\}^2+\{\mu_{12}(t)-\mu_{22}(t)\}^2$ by $D^*(t)$.
Define $A_t:=|\mu_{12}(t)-\mu_{11}(t)|$, $B_t:=|\mu_{12}(t)-\mu_{22}(t)|$ and $\eta_{rs}:=\E\|\Xvec-\Yvec\|$ for $\Xvec\sim \F_r$, $\Yvec\sim \F_s$ being independent.
Also define $\nu_1:=\eta_{12}-\eta_{11}$ and $\nu_2:=\eta_{12}-\eta_{22}$. Using
a straightforward calculation, one can verify that 
\begin{align*}
	\mu_{12}(t)-\mu_{11}(t) &= \beta_1\nu_1\,\mathbb{I}_{t\le\tau}
	+ \alpha_1(\alpha_2\nu_1-\alpha_2'\nu_2)\,\mathbb{I}_{t>\tau},\\
	\mu_{12}(t)-\mu_{22}(t) &= \beta_1(\beta_2\nu_2-\beta_2'\nu_1)\,\mathbb{I}_{t\le\tau}
	+ \alpha_1\nu_2\,\mathbb{I}_{t>\tau}.
\end{align*}
where $\alpha_{1}=\frac{\tau}{t}$, $\beta_{1}=\frac{n-\tau}{n-t}$, $\alpha_{2}=\frac{\tau-1}{t-1}$, $\beta_{2}=\frac{n-\tau-1}{n-t-1}$, $\alpha_{2}^{\prime}=1-\alpha_{2}$,
and $\beta_{2}^{\prime}=1-\beta_{2}$ as in the proof of Theorems 2-5.
For $t\le\tau$, note that $\beta_1,\beta_2,\beta_2^\prime$ all lie in $[0,1]$. The triangle inequality gives
\[
A_t+B_t \;\le\; \beta_1|\nu_1| + \beta_1\bigl(\beta_2|\nu_2|+\beta_2'|\nu_1|\bigr)
= \beta_1(1+\beta_2')|\nu_1|+\beta_1\beta_2|\nu_2|.
\]
Clearly, $\beta_1\beta_2 \le 1$. We also have
\begin{align*}
	\beta_{1}(1+\beta_{2}')&=\frac{n-\tau}{n-t}\cdot\frac{n+\tau-2t-1}{n-t-1} =\frac{n-t+t-\tau}{n-t}\cdot\frac{n-t-1+\tau-t}{n-t-1}\\
	%& =\frac{\left(n-t\right)\left(n-t-1\right)+\cancel{\left(n-t\right)\left(\tau-t\right)}+\cancel{\left(t-\tau\right)\left(n-t\right)}-\left(\tau-t\right)\left(\tau-t-1\right)}{\left(n-t\right)\left(n-t-1\right)}\\
	& =1-\frac{(\tau-t)(\tau-t-1)}{(n-t)(n-t-1)}\;\le\;1 ~~(\text{since } t\le \tau<n)
\end{align*}
%where the last inequality holds since $t\le \tau<n$. %and
%$(\tau-t)(\tau-t-1)\ge0$ as $\tau-t$ is a non-negative integer,
This implies $A_{t}+B_{t}\le|\nu_{1}|+|\nu_{2}|$. Note that the equality holds only when $\beta_1\beta_2=1$ and $\beta_1(1+\beta_2^{'})=1$. This is possible only for $t=\tau$.

Similarly, for $t\ge \tau$, note that $\alpha_1 $, $\alpha_2$ and $\alpha_2'$, all lie
in $[0,1]$. Now, the triangle inequality gives
\[
A_t+B_t \;\le\; \alpha_1\bigl(\alpha_2|\nu_1|+\alpha_2'|\nu_2|\bigr)
+\alpha_1|\nu_2|
= \alpha_1\alpha_2|\nu_1|+\alpha_1(1+\alpha_2')|\nu_2|.
\]
Since $\alpha_1\alpha_2\le 1$
and
\[
\alpha_1(1+\alpha_2')
= \frac{\tau}{t}\cdot\frac{2t-\tau-1}{t-1}
= 1 - \frac{(t-\tau)(t-\tau-1)}{t(t-1)} \;\le\; 1 ~~(\text{since } t \ge \tau),
\]
we have $A_t+B_t\le|\nu_1|+|\nu_2|$. Here also, the equality holds if and only if $t=\tau$.
%$A_\tau+B_\tau=|\nu_1|+|\nu_2|=:M_\mu$.
Therefore $A_t+B_t\le A_{\tau}+B_{\tau} =|\mu_{12}(\tau)-\mu_{11}(\tau)|+|\mu_{12}(\tau)-\mu_{22}(\tau)|=|\eta_{12}-\eta_{11}|+|\eta_{12}-\eta_{22}|=M_\mu$ for all $t\in\mathcal{A}_n$.

% {For $t<\tau$: $A_t = \beta_1|\nu_1|\le|\nu_1|$, and the triangle
	% inequality gives $B_t\le\beta_1(\beta_2|\nu_2|+\beta_2'|\nu_1|)
	% \le\beta_2|\nu_2|+\beta_2'|\nu_1|\le|\nu_2|$,
	% so $A_t+B_t\le|\nu_1|+|\nu_2|$.
	% For $t>\tau$: $B_t=\alpha_1|\nu_2|\le|\nu_2|$, and similarly
	% $A_t\le\alpha_1(\alpha_2|\nu_1|+\alpha_2'|\nu_2|)
	% \le\alpha_2|\nu_1|+\alpha_2'|\nu_2|\le|\nu_1|$.
	% At $t=\tau$ one has $\beta_1=1$, $\beta_2=1$, $\beta_2'=0$,
	% giving $A_\tau+B_\tau=|\nu_1|+|\nu_2|=:M_\mu$,
	% and hence $A_t+B_t\le M_\mu$ for all $t\in\mathcal{A}_n$.}

For any $r>0$, define the event $\mathcal{E}_t(r) := \bigl\{\max_{rs\in\{11,12,22\}}|\Delta_{rs}(t)|\le r\bigr\}$. If it holds, we 
can derive an upper bound on $|D(t)-D^*(t)|$ as follows.
Define $T_\ell(t)=T_{12}(t)-T_{\ell\ell}(t)$ and $\mu_\ell(t)=\mu_{12}(t)-\mu_{\ell\ell}(t)$ for $\ell\in\{1,2\}$.
So, we have $D(t)=T_1^2(t)+T_2^2(t)$ and $D^*(t)=\mu_1^2(t)+\mu_2^2(t)$. Note that
$T_\ell(t)-\mu_\ell(t)=\Delta_{12}(t)-\Delta_{\ell\ell}(t)$ for $\ell=1,2$. Also, 
on $\mathcal{E}_t(r)$, for $\ell=1,2$, we have $|T_\ell(t)-\mu_\ell(t)|\le 2r$ (follows from triangle inequality).
Now, using the result $|a^2-b^2|=|a-b||a+b|\le|a-b|(|a-b|+2|b|)$ with  $a=T_\ell(t)$ and $b=\mu_\ell(t)$, and noting that $A_t=|\mu_1(t)|$, $B_t=|\mu_2(t)|$, one gets
\begin{align*}
	|D(t)-D^*(t)|
	&\le \bigl|T_1^2(t)-\mu_1^2(t)\bigr| + \bigl|T_2^2(t)-\mu_2^2(t)\bigr| \\
	&\le 2r\,(2r+2A_t) + 2r\,(2r+2B_t) \\
	&= 4(A_t+B_t)\,r + 8r^2 \le 4M_\mu\,r+8r^2.
\end{align*}
Hence, by taking $r=r^*(\varepsilon):=\min\!\Big\{\dfrac{\varepsilon}{8M_\mu},\,
\sqrt{\dfrac{\varepsilon}{16}}\Big\}$,
we have $|D(t)-D^*(t)|\le \varepsilon$ on $\mathcal{E}_t(r^*(\varepsilon))$. A union bound over $t\in\mathcal{A}_n$ gives
\begin{align*}
	\P\Bigl(\sup_{t\in\mathcal{A}_n}|D(t)-D^*(t)|>\varepsilon\Bigr)
	&\le \P\Bigl(\bigcup_{t\in\mathcal{A}_n}\mathcal{E}_t(r^*(\varepsilon))^c\Bigr) 
	\le \sum_{t\in\mathcal{A}_n}\P\bigl(\mathcal{E}_t(r^*(\varepsilon))^c\bigr) \\
	&\le \sum_{t\in\mathcal{A}_n}\sum_{rs\in\{11,12,22\}}\P\bigl(|\Delta_{rs}(t)|>r^*(\varepsilon)\bigr) \\
	&\le 6|\mathcal{A}_n|\exp\Bigl(-c_0\,m_*\,\phi\!\Bigl(\frac{r^*(\varepsilon)}{K_0\sqrt{d}}\Bigr)\Bigr),
\end{align*}
where $m_*:=\min_{t\in\mathcal A_n}m_t$ (the last inquality follows from \eqref{Delta_concentration}). Let $F(t):=w(t)D^*(t)$, where $w(t)=t(n-t)/n^2$. Since $0\le w(t)\le 1/4$, this implies
\begin{equation}
	\label{eq:unif_D_bound_scaled}
	\P\Big(\sup_{t\in\mathcal A_n}\big|w(t)D(t)-F(t)\big|>\tfrac{\varepsilon}{4}\Big)
	\le 6|\mathcal A_n|\exp\!\Big(-c_0\,m_*\,\phi\!\Big(\tfrac{r^*(\varepsilon)}{K_0\sqrt{d}}\Big)\Big).    
\end{equation}

From the condition of Theorem~\ref{thm:HDHSS 1},for every fixed $\delta > 0$, we have
\begin{equation}
	\label{eq:HDHSS_one_condition_unscaled}
	r^*\!\big(\Delta_{n,d}(\delta)\big)\ \gg\ \sqrt{\frac{d\log n}{n\delta_n}}.
\end{equation}

\smallskip
\noindent
{(\text{a})}
Define $\mathcal{A}^\circ_\delta:=\{t\in\mathcal{A}_n:|t-\tau|\ge\delta n\}$
and recall $\Delta_{n,d}(\delta):=F(\tau)-\max_{t\in\mathcal{A}^\circ_\delta}F(t)>0$.
We claim that given the event $\mathcal{G}:=\big\{\sup_{t\in\mathcal{A}_n}|w(t)D(t)-F(t)| \le\tfrac{1}{2}\Delta_{n,d}(\delta)\big\}$,  $\widehat{T}_n$  satisfies $|\widehat{T}_n-\tau|<\delta n$.
On $\mathcal{G}$, for any $t\in\mathcal{A}^\circ_\delta$,
\[
w(t)D(t)\le F(t)+\tfrac{1}{2}\Delta_{n,d}(\delta)
\le \max_{t'\in\mathcal{A}^\circ_\delta}F(t')+\tfrac{1}{2}\Delta_{n,d}(\delta)
= F(\tau)-\tfrac{1}{2}\Delta_{n,d}(\delta),
\]
From the definition of ${\cal G}$, we also have $w(\tau)D(\tau)\ge F(\tau)-\tfrac{1}{2}\Delta_{n,d}(\delta)$.
These two together imply $w(\tau)D(\tau)\ge w(t)D(t)$ for all $t\in\mathcal{A}^\circ_\delta$ and
$\widehat{T}_n\notin\mathcal{A}^\circ_\delta$, or equivalently, $|\widehat{T}_n-\tau|<\delta n$.

Therefore, applying \eqref{eq:unif_D_bound_scaled} with $\frac{\varepsilon}{4}=\frac{1}{2}\Delta_{n,d}(\delta)$, we obtain
\begin{align}
	\P\Big(\Big|\frac{\widehat T_n}{n}-\theta\Big|>\delta\Big) \le P({\cal G}^c)
	\le 6|\mathcal A_n|\exp\!\Big(-c_0\,m_*\,\phi\!\Big(\tfrac{r^*(2\Delta_{n,d}(\delta))}{K_0\sqrt{d}}\Big)\Big). \label{concentration_T_n_hat}    
\end{align}
Now, we prove the almost sure convergence of $\widehat{T}_n/n$ 
follows from the Borel--Cantelli lemma. First note that
\[
\Delta_{n,d}(\delta)\;\le\; F(\tau)=w(\tau)(|\nu_1|^2+|\nu_2|^2)
\;\le\;\tfrac{1}{4}(|\nu_1|+|\nu_2|)^2=\frac{M_\mu^2}{4}.
\]
Since $r^*(x)\le\sqrt{x/16}$ for all $x>0$, we have 
$r^*(\Delta_{n,d}(\delta))\le M_\mu/8$ and $r^*(2\Delta_{n,d}(\delta))
\le\sqrt{2}\,M_\mu/8$. Under the sub-exponential 
assumption, $\eta_{rs}\le K_0\sqrt{d}$ for all $r,s\in\{1,2\}$, 
giving $M_\mu\le 4K_0\sqrt{d}$. Consequently, 
\[
\frac{r^*(2\Delta_{n,d}(\delta))}{K_0\sqrt{d}}
\;\le\;\frac{\sqrt{2}\,M_\mu}{8K_0\sqrt{d}}
\;\le\;\frac{\sqrt{2}\cdot 4K_0\sqrt{d}}{8K_0\sqrt{d}}
\;=\;\frac{1}{\sqrt{2}}\;<\;1.
\]
%so $\phi\!\left(\tfrac{r^*(2\Delta_{n,d}(\delta))}{K_0\sqrt{d}}\right)
%=\left(\tfrac{r^*(2\Delta_{n,d}(\delta))}{K_0\sqrt{d}}\right)^2$.
Since $r^*$ is monotonically increasing and $\phi(x) = x^2$ for $x<1$, %$r^*(2\Delta_{n,d}(\delta))\ge 
%r^*(\Delta_{n,d}(\delta))$, and therefore
\[
c_0\,m_*\,\phi\!\left(\frac{r^*(2\Delta_{n,d}(\delta))}{K_0\sqrt{d}}\right)
\;\ge\; c_0\,m_*\,\phi\!\left(\frac{r^*(\Delta_{n,d}(\delta))}{K_0\sqrt{d}}\right)
\;=\; \frac{c_0\,m_*}{K_0^2\,d}
\,\bigl(r^*(\Delta_{n,d}(\delta))\bigr)^2.
\]
From \eqref{eq:HDHSS_one_condition_unscaled}, we have  
$\bigl(r^*(\Delta_{n,d}(\delta))\bigr)^2/K_0^2\,d\ \gg \log n/(n\delta_n)$, 
and hence $$
c_0\,m_*\,\phi\!\left(\frac{r^*(2\Delta_{n,d}(\delta))}{K_0\sqrt{d}}\right)
\gg\log n  ~~( \text{since } m_*\asymp n\delta_n)
.$$
This proves the summability of the right side of \ref{concentration_T_n_hat}, and hence 
%c_0\,m_*\,\phi\!\left(\frac{r^*%(2\Delta_{n,d}(\delta))}{K_0\sqrt{d}}\right)
%\;\ge\;\lambda_n\log n.
%\]
%Hence, for some constant $C>0$ and all large $n$,
%\[
%6|\mathcal{A}_n|\exp\!\left(-c_0\,m_*\,\phi\!\left(\frac{r^*(2\Delta_{n,d}(\delta))}{K_0\sqrt{d}}\right)\right)
%\;\le\;
%C\,n\cdot e^{-\lambda_n\log n}
%\;=\;
%C\,n^{1-\lambda_n}.
%\]
%Since $\lambda_n\to+\infty$, we have $1-\lambda_n<-1$ for all large $n$, 
%so $\sum_n n^{1-\lambda_n}<\infty$. 
an application of the Borel--Cantelli 
lemma yields $\widehat\theta_n=\widehat{T}_n/n\stackrel{a.s.}{\longrightarrow}\theta$.

\smallskip
\noindent{(b)} We show that $\P(\widehat{S}_n > c_{1-\alpha})\to 1$ by establishing 
that $P(\widehat{S}_n>F(\tau)/4) \rightarrow 1$ and  $P(c_{1-\alpha}<F(\tau)/4)\rightarrow 1$.
%\noindent\textit{Lower bound on $S_n$.} 
Define the event $\mathcal{E}_{n,d} := \{\sup_{t\in\mathcal{A}_n}|w(t)D(t)-F(t)|
\le F(\tau)/4\}$ and note that 
\[
\P(\mathcal{E}_{n,d}^c)
\;\le\;
6|\mathcal{A}_n|\exp\!\Big(-c_0\,m_*\,
\phi\!\big(r^*(F(\tau))/K_0\sqrt{d}\big)\Big)
\]
(follows from \eqref{eq:unif_D_bound_scaled} using $\varepsilon = F(\tau)$). Since $F(\tau)\ge\Delta_{n,d}(\delta)$ for every $\delta>0$, and $r^*$ is monotonically 
increasing, under the condition~\eqref{eq:HDHSS_one_condition_unscaled}, arguments similar to those as in part~(a) give $\P(\mathcal{E}_{n,d}^c)\to 0$.
Now, given the event $\mathcal{E}_{n,d}$,
we have
$
\widehat{S}_n =\sup_{t\in\mathcal{A}_n}w(t)D(t) \ge w(\tau)D(\tau) \ge F(\tau) - \tfrac{1}{4}F(\tau) 
\;>\; \tfrac{1}{4}F(\tau).
$
Hence $\P(\widehat{S}_n > F(\tau)/4)\ge P(\mathcal{E}_{n,d})\to 1$.

Recall that $c_{1-\alpha}$ is the $(1-\alpha)$-quantile of the 
conditional distribution of $\widehat{S}_n^{(\Pi)}$ given the data, and $\Pi$ 
is a uniform random permutation of $\{1,\ldots,n\}$ independent of the 
data. %That is, $c_{1-\alpha}$ is the smallest value $x$ such that at 
%most a fraction $\alpha$ of all permutations yield $S_n^{(\Pi)}>x$.
So, $c_{1-\alpha}>F(\tau)/4$ implies that given the data, more than $\alpha$ proportions of permutations give $\widehat{S}_n^{(\Pi)}>F(\tau)/4$. %i.e.,
%\[
%\big\{c_{1-\alpha}>F(\tau)/4\big\}
%%\Big\{\P\big(S_n^{(\Pi)}>F(\tau)/4\;\big|\;
%\mathbf{Z}_1,\ldots,\mathbf{Z}_n\big)>\alpha\Big\}.
%\]
Now, applying Markov's inequality to the random variable 
$p_n(\Zvec_1,\ldots,\Zvec_n) := \linebreak \P(\widehat{S}_n^{(\Pi)}>F(\tau)/4\mid\mathbf{Z}_1,\ldots,\mathbf{Z}_n)$, we get
\begin{align*}
	\P\!\big(c_{1-\alpha}>F(\tau)/4\big)
	&\;\le\;
	\P\!\big(p_n(\Zvec_1,\Zvec_2,\ldots,\Zvec_n)>\alpha\big)\\
	&\;\le\;
	\frac{1}{\alpha}\,\E[p_n(\Zvec_1,\Zvec_2,\ldots,\Zvec_n)]
	\;=\;
	\frac{1}{\alpha}\,\P\!\big(\widehat{S}_n^{(\Pi)}>F(\tau)/4\big),
\end{align*}
%where the last step uses $\E[p_n(\Zvec_1,\Zvec_2,\ldots,\Zvec_n)]=\P(S_n^{(\Pi)}>F(\tau)/4)$
%by the tower property.
Now, applying \eqref{eq:max_bound_final} with $\varepsilon = F(\tau)/4$, we have
\begin{align}
	\P\!\big(\widehat{S}_n^{(\Pi)}>F(\tau)/4\big)
	\;\le\;& \;
	C|\mathcal{A}_n|\exp\!\Big(-c\,m_*\min\Big(\frac{F(\tau)}{4d},
	\sqrt{\frac{F(\tau)}{4d}}\Big)\Big) \nonumber\\
	&+\;C|\mathcal{A}_n|\exp\!\Big(-c\,m_*\frac{F(\tau)}{4M_\mu^{2}}\Big). \label{permutation_conc}
\end{align}
Using $(a^2+b^2)\ge(a+b)^2/2$, we get  
$F(\tau)/M_\mu^2 = w(\tau)D^*(\tau)/M^2_\mu\ge w(\tau)/2$. So, the exponent of the second term is at least 
$c\,m_*\,w(\tau)/8 \gg \log n$ (here $m_* \asymp n\delta_n \gg \log n$ follows from the condition of the theorem). 

%For the first 
%term, it suffices to show $m_*\,F(\tau)/d\gg\log n$. 
Since 
$F(\tau)\ge\Delta_{n,d}(\delta)$ and $r^*(x)\le\sqrt{x/16}$, condition 
\eqref{eq:HDHSS_one_condition_unscaled}
give \linebreak
$
{\Delta_{n,d}(\delta)}/{d}
\;\gg\;
{\log n}/{n\delta_n},
$
and since $m_*\asymp n\delta_n$, we have
\[
m_*\cdot\frac{F(\tau)}{d}
\;\ge\;
m_*\cdot\frac{\Delta_{n,d}(\delta)}{d}
\;\gg\;
(n\delta_n)\cdot\frac{\log n}{n\delta_n}
\;=\;\log n.
\]
Therefore both terms in \eqref{permutation_conc} tend to zero, giving $\P(c_{1-\alpha}>F(\tau)/4)\to 0$. Combining the two convergence results, we get $\P(\widehat{S}_n>c_{1-\alpha})\to 1$.
\end{proof}

\noindent
{\bf Remark}: {\underline {Relaxed condition for HDHSS consistency under Sub-Gaussianity}}

\vspace{0.1in}
Under the coordinatewise sub-Gaussian assumption, Lemma~A.\ref{sub-gaussian:conc_unscaled} gives a 
concentration bound for the centered distance kernel (see \eqref{eq:sg_ustat_tail_unscaled}), which is somewhat similar to the concentration bound stated in Lemma~A.\ref{lem:block+perm-U} for the sub-exponential family (see \eqref{sub-expo-conc}). However, the bound in \eqref{eq:sg_ustat_tail_unscaled} is much sharper and unlike \eqref{sub-expo-conc}, it is
independent of $d$.  Repeating similar arguments as in the proof of 
Theorem \ref{thm:HDHSS 1}, one can prove the HDHSS consistency of the proposed method for the sub-Gaussian family, but under a more relaxed condition $r^*\!\big(\Delta_{n,d}(\delta)\big)\ \gg\ \sqrt{\frac{\log n}{n\delta_n}}$, where unlike  \eqref{eq:HDHSS_one_condition_unscaled}, where we do not have
factor $d$ on the right-hand side.

\begin{proof}[\textbf{Proof of Theorem~\ref{HDHSS 2}}]
Denote $\mathcal{D}_n^{h,\psi}(\mathcal{X}_t,\mathcal{Y}_t)$ 
by $D^{h,\psi}(t)$ and its population analogue by
$D^{h,\psi}_*(t) := \{\E T_{12}^{h,\psi}(t)-\E T_{11}^{h,\psi}(t)\}^{2}
+\{\E T_{12}^{h,\psi}(t)-\E T_{22}^{h,\psi}(t)\}^{2}
$. 
Also, define $F^{h,\psi}(t):=w(t)D^{h,\psi}_*(t)$, where $w(t)=t(n-t)/n^2$.
The key difference from the proof of Theorem~6 is that, since
$\varphi_{h,\psi}$ is bounded with $\|\varphi_{h,\psi}\|_\infty\le L$,
concentration of the U-statistics $T_{rs}^{h,\psi}(t)$ follows from
McDiarmid's inequality rather than sub-exponential concentration.

\smallskip
\noindent{(a)}
Define $T_1^{h,\psi}(t):=T_{12}^{h,\psi}(t)-T_{11}^{h,\psi}(t)$ and
$T_2^{h,\psi}(t):=T_{12}^{h,\psi}(t)-T_{22}^{h,\psi}(t)$. So, here
$D^{h,\psi}(t)=(T_1^{h,\psi}(t))^2+(T_2^{h,\psi}(t))^2$
where $|T_\ell^{h,\psi}(t)|\le 2L$ for $\ell\in\{1,2\}$.
Fix $t\in\mathcal{A}_n$ and let $m_t:=\min\{t,n-t\}$.
Note that if $\Zvec_k$ is replaced by an i.i.d.\ copy $\Zvec^\prime_k$, then each of
$T_{11}^{h,\psi}(t),T_{22}^{h,\psi}(t),T_{12}^{h,\psi}(t)$
changes by at most $CL/m_t$ (only pairs involving index $k$ are affected) for some constant $C>0$. To show this, let us consider the following two cases.
\noindent\textit{Case 1 ($k\le t$):} The number of terms in 
$T_{11}^{h,\psi}(t)$, $T_{22}^{h,\psi}(t)$, and $T_{12}^{h,\psi}(t)$ 
involving $\Zvec_k$ are $t-1$, $0$, and $n-t$, respectively. Using 
$|\varphi_{h,\psi}|\le L$ and the respective normalizations, the changes 
for replacing $\Zvec_k$ by $\Zvec_k'$ are at most
\[
\frac{(t-1)L}{\binom{t}{2}} = \frac{2L}{t},
\qquad 0, \qquad
\frac{(n-t)\cdot L}{t(n-t)} = \frac{L}{t},
\]
respectively. Hence, the difference in $T_\ell^{h,\psi}(t)=|T_\ell^{h,\psi,(k)}(t)-T_\ell^{h,\psi}(t)|$ is bounded by $c_{k,t}=3L/t$ both for $\ell=1$ and $2$.
%\[
%c_{k,t}^{(1)} = \frac{6L}{t}, \qquad c_{k,t}^{(2)} = \frac{2L}{t}, 
%\qquad k\le t,
%\]

\smallskip
\noindent\textit{Case 2 ($k> t$):} The number of terms in 
$T_{11}^{h,\psi}(t)$, $T_{22}^{h,\psi}(t)$, and $T_{12}^{h,\psi}(t)$ 
involving $\Zvec_k$ are $0$, $n-t-1$, and $t$, respectively. So, the changes in these terms are at most
\[
0, \qquad
\frac{(n-t-1)L}{\binom{n-t}{2}} = \frac{2L}{n-t},
\qquad
\frac{t\cdot L}{t(n-t)} = \frac{L}{n-t},
\]
respectively. Hence, the difference in $T_\ell^{h,\psi}(t)$ is bounded by $c_{k,t}=3L/(n-t)$ both for $\ell=1$ and $2$.
%\[
%c_{k,t}^{(1)} = \frac{2L}{n-t}, \qquad c_{k,t}^{(2)} = \frac{6L}{n-t},
%\qquad k > t.
%\]

%\smallskip
%\noindent Combining both cases, the exact bounded-difference constants are
%\[
%c_{k,t}^{(1)} = \frac{6L}{t}\,\mathbf{1}_{k\le t}
%+\frac{2L}{n-t}\,\mathbf{1}_{k>t},
%\qquad
%c_{k,t}^{(2)} = \frac{2L}{t}\,\mathbf{1}_{k\le t}
%+\frac{6L}{n-t}\,\mathbf{1}_{k>t}.
%\]
%Summing their squares over $k\in\{1,\ldots,n\}$,
%\begin{align*}
%\sum_{k=1}^{n}\!\big(c_{k,t}^{(1)}\big)^2
%&= t\cdot\frac{36L^2}{t^2}+(n-t)\cdot\frac{4L^2}{(n-t)^2}
%= L^2\!\left(\frac{36}{t}+\frac{4}{n-t}\right)
%\;\le\; \frac{40L^2}{m_t},\\
%\sum_{k=1}^{n}\!\big(c_{k,t}^{(2)}\big)^2
%&= t\cdot\frac{4L^2}{t^2}+(n-t)\cdot\frac{36L^2}{(n-t)^2}
%= L^2\!\left(\frac{4}{t}+\frac{36}{n-t}\right)
%\;\le\; \frac{40L^2}{m_t}.
%\end{align*}
Since bounded-difference condition holds with $\sum_{k=1}^{n}\!c_{k,t}^2=L^2/cm_t$ (where $c=\frac{1}{18}$), %Hence for each $\ell\in\{1,2\}$, $T_\ell^{h,\psi}(t)$ satisfies a 
%bounded-difference condition with $\sum_{k=1}^n(c_{k,t}^{(\ell)})^2
%\le 40L^2/m_t$. 
using McDiarmid's inequality, for all $r>0$, we have 
\[
\P\!\big(|T_\ell^{h,\psi}(t)-\E T_\ell^{h,\psi}(t)|>r\big)
\;\le\; 2\exp\!\Big(-\frac{c\,m_t\,r^2}{L^2}\Big),
\qquad \ell\in\{1,2\}.
\]
Since $|x^2-y^2|\le(|x|+|y|)|x-y|$ and
$|T_\ell^{h,\psi}(t)|,|\E T_\ell^{h,\psi}(t)|\le 2L$, we get
\[
\big|D^{h,\psi}(t)-D^{h,\psi}_*(t)\big|
\le 4L\sum_{\ell=1}^2\big|T_\ell^{h,\psi}(t)-\E T_\ell^{h,\psi}(t)\big|.
\]
Since $w(t)\le 1/4$, for any fixed $\eta>0$, the use of  $r=\eta/(8L)$ and a union bound
over $\ell\in\{1,2\}$ yields
\begin{align*}
	\Pr\Big(\big|w(t)D^{h,\psi}(t)-F^{h,\psi}(t)\big|>\eta\Big)
	\le 4\exp\!\Big(-c\,m_t\,\frac{\eta^2}{L^4}\Big).    
\end{align*}
Since $m_t\ge n\delta_n$, a union bound over $t\in\mathcal{A}_n$
gives
\begin{align}
	\Pr\!\Big(\sup_{t\in\mathcal{A}_n}
	\big|w(t)D^{h,\psi}(t)-F^{h,\psi}(t)\big|>\eta\Big)
	\le C|\mathcal{A}_n|\exp\!\Big(-c\,n\delta_n\,\frac{\eta^2}{L^4}\Big).  \label{eq:concentration:D_h_psi}   
\end{align}

Define $\mathcal{A}^{\circ}_{\delta}:=\{t\in\mathcal{A}_n:|t-\tau|\ge\delta n\}$
and recall $\Delta_{n,d}^{h,\psi}(\delta)
:=F^{h,\psi}(\tau)-\max_{t\in\mathcal{A}^{\circ}_{\delta}}F^{h,\psi}(t)>0$.
We claim that given the event
\[
\mathcal{G}_n^{h,\psi}:=\Big\{\sup_{t\in\mathcal{A}_n}
\big|w(t)D^{h,\psi}(t)-F^{h,\psi}(t)\big|
\le\tfrac{1}{2}\Delta_{n,d}^{h,\psi}(\delta)\Big\},
\]
the estimator $\widehat{T}_n^{h,\psi}$ satisfies
$|\widehat{T}_n^{h,\psi}-\tau|<\delta n$.
On $\mathcal{G}_n^{h,\psi}$, for any $t\in\mathcal{A}^{\circ}_{\delta}$,
\begin{align*}
	w(t)D^{h,\psi}(t)
	&\le F^{h,\psi}(t)+\tfrac{1}{2}\Delta_{n,d}^{h,\psi}(\delta)\\
	&\le \max_{t'\in\mathcal{A}^{\circ}_{\delta}}F^{h,\psi}(t')
	+\tfrac{1}{2}\Delta_{n,d}^{h,\psi}(\delta)
	= F^{h,\psi}(\tau)-\tfrac{1}{2}\Delta_{n,d}^{h,\psi}(\delta).    
\end{align*}
From the definition of $\mathcal{G}_n^{h,\psi}$ we also have
$w(\tau)D^{h,\psi}(\tau)\ge F^{h,\psi}(\tau)-\tfrac{1}{2}\Delta_{n,d}^{h,\psi}(\delta)$.
These two together imply
$w(\tau)D^{h,\psi}(\tau)\ge w(t)D^{h,\psi}(t)$ for all
$t\in\mathcal{A}^{\circ}_{\delta}$, and hence
$\widehat{T}_n^{h,\psi}\notin\mathcal{A}^{\circ}_{\delta}$, or equivalently,
$|\widehat{T}_n^{h,\psi}-\tau|<\delta n$. So, we have
$P({\cal G}_n^{h,\psi}) \le P(|\widehat{T}_n^{h,\psi}-\tau|\le \delta n)$. 

Now, for $\eta=\tfrac{1}{2}\Delta_{n,d}^{h,\psi}(\delta)$,
since $n\delta_n\eta^2=\frac{1}{4}n\delta_n (\Delta_{n,d}^{h,\psi}(\delta))^2\gg \frac{1}{4}\log n$ (follows under the given condition),
the right side of \eqref{eq:concentration:D_h_psi} summable in $n$, i.e., $\sum_{n\ge 1} P\big(({\cal G}_n^{h,\psi})^c\big) < \infty$. This implies
\[ \sum_{n \ge 1} P\left(\Big|\frac{\widehat{T}_n^{h,\psi}}{n}-\theta\Big|> \delta \right) =\sum_{n \ge 1} P(|\widehat{T}_n^{h,\psi}-\tau|> \delta n) \le \sum_{n\ge 1} P\big(({\cal G}_n^{h,\psi})^c\big) < \infty. \]
%\[
%\sup_{t\in\mathcal{A}_n}\big|w(t)D^{h,\psi}(t)-F^{h,\psi}(t)\big|
%\le \tfrac{1}{2}\Delta_{n,d}^{h,\psi}(\delta)
%\quad\text{eventually a.s.}
%\]

{
	Since $\delta$ is arbitrary, using the Borel--Cantelli lemma, we have
	%\[
	%\sup_{t\in\mathcal{A}_n}\big|w(t)D^{h,\psi}(t)-F^{h,\psi}(t)\big|
	%\;\le\; \tfrac{1}{2}\Delta_{n,d}^{h,\psi}(\delta)
	%\qquad\text{eventually a.s.}
	%\]
	
	%Since the event $\mathcal{G}^{h,\psi}$
	%holds eventually almost surely, it follows that
	%$|\widehat{T}_n^{h,\psi}-\tau|<\delta n$ eventually a.s. As $\delta>0$ is
	%arbitrary, 
	$\widehat{\theta}_n^{h,\psi}:=\widehat{T}_n^{h,\psi}/n\to\theta$ a.s.
}

%If $w(\tau)D^{h,\psi}(\tau)>\max_{t\in\mathcal{A}^\circ_\delta}
%w(t)D^{h,\psi}(t)$, so $|\widehat{T}_n^{h,\psi}-\tau|<\delta n$
%eventually a.s. Since $\delta>0$ is arbitrary,
%$\widehat{\theta}_n^{h,\psi}:=\widehat{T}_n^{h,\psi}/n\to\theta$ a.s.

%On this event, $w(\tau)D^{h,\psi}(\tau)>\max_{t\in\mathcal{A}_\delta}
%w(t)D^{h,\psi}(t)$, so $|\widehat{T}_n^{h,\psi}-\tau|<\delta n$
%eventually a.s. Since $\delta>0$ is arbitrary,
%$\widehat{\theta}_n^{h,\psi}:=\widehat{T}_n^{h,\psi}/n\to\theta$ a.s.

\smallskip
\noindent{(b)} We prove $\P(\widehat{S}_n^{h,\psi}>\hat c_{1-\alpha})\to 1$ by
establishing $\P(\widehat{S}_n^{h,\psi}>F^{h,\psi}(\tau)/4)\to 1$ and
$\P(\hat c_{1-\alpha}<F^{h,\psi}(\tau)/4)\to 1$. Define
$$\mathcal{E}_{n,d}^{h,\psi}
:=\{\sup_{t\in\mathcal{A}_n}|w(t)D^{h,\psi}(t)-F^{h,\psi}(t)|
\le F^{h,\psi}(\tau)/4\}.$$ The bounded-difference calculations in part~(a)
with $\eta=F^{h,\psi}(\tau)/4$ and the fact
$F^{h,\psi}(\tau)\ge\Delta_{n,d}^{h,\psi}(\delta)\gg L^2\sqrt{\log n/(n\delta_n)}$,
gives $\P((\mathcal{E}_{n,d}^{h,\psi})^c)\to 0$. Now, given $\mathcal{E}_{n,d}^{h,\psi}$,
we have $\widehat{S}_n^{h,\psi}\ge w(\tau)D^{h,\psi}(\tau)\ge \tfrac34 F^{h,\psi}(\tau)
>\tfrac14 F^{h,\psi}(\tau)$. Therefore,
$\P(\widehat{S}_n^{h,\psi}>F^{h,\psi}(\tau)/4)\to 1$.

Now, define $\widehat{S}_n^{h,\psi,(\Pi)}$ as the permutation analog of $\widehat{S}_n^{h,\psi}$ and $c_{1-\alpha}$ is the $(1-\alpha)$-th quantile of the conditional distribution of $\widehat{S}_n^{h,\psi,(\Pi)}$ given the data. Applying Markov's inequality to the random variable
$p_n^{h,\psi}(\Zvec_1,\ldots,\Zvec_n)=
\P(\widehat{S}_n^{h,\psi,(\Pi)}>F^{h,\psi}(\tau)/4\mid\Zvec_1,\ldots,\Zvec_n)$, as in
the proof of Theorem~\ref{thm:HDHSS 1},
\begin{align}
	\P\!\big(\hat c_{1-\alpha}>F^{h,\psi}(\tau)/4\big)
	\;\le\; \tfrac{1}{\alpha}\,\P\!\big(\widehat{S}_n^{h,\psi,(\Pi)}>F^{h,\psi}(\tau)/4\big).
	\label{perm-consistency-generalized}
\end{align}
Again, %The right-hand side is controlled by the bounded-kernel analogue
%of~\eqref{eq:max_bound_final}, established in
from Lemma~A.\ref{lem:perm_conc_BG_gen}: for every $\varepsilon>0$, we have
\begin{equation}\label{permutation_conc_BG}
	\P\!\big(\widehat{S}_n^{h,\psi,(\Pi)}>\varepsilon\big)
	\;\le\;
	C|\mathcal{A}_n|\exp\!\Big(-c\,m_*\,\tfrac{\varepsilon}{L^2}\Big)
	+C|\mathcal{A}_n|\exp\!\Big(-c\,m_*\,\tfrac{\varepsilon}{(M_\mu^{h,\psi})^2}\Big),
\end{equation}
where $M_\mu^{h,\psi}:=|\mu_{12}^{h,\psi}-\mu_{11}^{h,\psi}|
+|\mu_{12}^{h,\psi}-\mu_{22}^{h,\psi}|$ with
$\mu_{rs}^{h,\psi}:=\E\varphi_{h,\psi}(\Xvec,\Yvec)$, and
$\Xvec\sim F_r$, $\Yvec\sim F_s$ are independent.
Since
$F^{h,\psi}(\tau)/(M_\mu^{h,\psi})^2\ge w(\tau)/2$ (follows from the inequality $(a^2+b^2)\ge(a+b)^2/2$), for $\varepsilon=F^{h,\psi}(\tau)/4$,
the exponent of the second term is at least $c\,m_*\,w(\tau)/8\gg\log n$. Since
$F^{h,\psi}(\tau)\ge\Delta_{n,d}^{h,\psi}(\delta)\gg L^2\sqrt{\log n/(n\delta_n)}$
and $m_*\asymp n\delta_n$, the  exponent of the first term is at least
$c\sqrt{n\delta_n\log n}\gg\log n$. So, for $\varepsilon=F^{h,\psi}(\tau)/4$, both terms
in~\eqref{permutation_conc_BG} converge to $0$ as $n$ tends to infinity. So, from \eqref{perm-consistency-generalized}, 
$\P(\hat c_{1-\alpha}>F^{h,\psi}(\tau)/4) \rightarrow 0$  as $n \rightarrow \infty$. Combining these two results, we get
$\P(\widehat{S}_n^{h,\psi}>\hat c_{1-\alpha})\to 1$ as $n$ tends to infinity.
\end{proof}

%%%%%%%%%%%%%%%%%%%%%%%%%%%%%%%%%%%%%%%%%%%%%%%%%%%%%%%%%%%%%%%%%%%%%%%%%%%%%%%%%%%%
\vskip .65cm
\noindent
Spandan Ghoshal, Theoretical Statistics and Mathematics Unit, Indian Statistical Institute, Kolkata
%\vskip 2pt
\\
E-mail: spandan.ghoshal\_r@isical.ac.in
\vskip 2pt

\noindent
Bilol Banerjee, Department of Statistics and Data Science, National University of Singapore
%\vskip 2pt
\\
E-mail: bilol@nus.edu.sg
\vskip 2pt

\noindent
Anil K. Ghosh, Theoretical Statistics and Mathematics Unit, Indian Statistical Institute, Kolkata
%\vskip 2pt
\\
E-mail: akghosh@isical.ac.in

\end{document}